\newtheorem{thm}{Theorem}[section]
\newtheorem{lem}[thm]{Lemma}
\newtheorem{clm}[thm]{Claim}
\newtheorem{prop}[thm]{Proposition}
\theoremstyle{remark}
\newtheorem{rmk}[thm]{Remark}
\theoremstyle{definition}
\newtheorem{Def}[thm]{Definition}
\title{Pseudolocality of Ricci Flow under Integral Bound of Curvature}
\author{Wang Yuanqi\thanks{Department of Mathematics, UW-Madison. Email address: wang56@wisc.edu.}
}
\date{}
\begin{document}
\maketitle
\begin{abstract}We prove a pseudolocality type theorem for compact
Ricci Flow under local  integral bounds of curvature. The main tool
is Local Ricci Flow introduced by Deane Yang in \cite{deane} and
Pseudolocality Theorem of Perelman in \cite{P1}. We also study $L^p$
bounds for the derivatives of curvature and smooth extension of
Local Ricci Flow.
\end{abstract}
\section{Introduction}
In this short paper we are interested in Pseudolocality phenomenon
of compact Ricci Flow. In 1982, R.Hamilton introduced the Ricci flow
in
 \cite{H3}, where he deforms a Riemanniana metric in the direction
 of -2 times its Ricci curvature:  \[ \frac{\partial g_{ij}}{\partial t}=-2Ric_{ij}.\]
 Ricci flow gives a canonical way of deforming an arbitrary
 metric to a critical metric (Einstein metric in particular). This program
 of Ricci flow has been remarkably succesful in the recent years
 thanks to the seminal work of G. Perelman \cite{P1}.\\ \\
 By the nature of this evolution equation,
 any change of initial metric initially and locally, will affect the evolution of metric
 afterwards globally.  However, an elegant and powerful theorem by G. Perelman said that
 if the initial metric is closed to be Euclidean in some unit ball, then, no matter what happen
 far away, this almost Euclidean region will remains to be so for some fixed time.
 This gives us hope as how to control the metric based on local infomation even during a global
 flow. Following Perelman, we want to replace the almost Euclidean isoperimetric constant assumption in Perelman's original theorem by some integral bound of curvature.
 \begin{Def}{$\digamma(n,p,K,\tau,\eta)$ assumption:}\\ For all $n\geq 3, p> \frac{n}{2},\ \eta,\ K$, there exists
$\omega_0$ such that $\forall\ \tau\leq \omega_0$,  We say a ball
$[B(x,r),\ g]$ in a compact Riemannian Manifold satisfy
$\digamma(n,p,K,\tau,\eta)$ assumption if :
\begin{enumerate}
\item $Vol\ B(x,\tau r)\geq w(n)(\eta\tau  r)^n$
\item $|Rm|_{\frac{n}{2}[B(x,\tau r)]}\leq \varpi(n,\ \eta)$
\item $|Ric|_{p[B(x,r)]}\leq Kr^{\frac{n}{p}-2},\ p>\frac{n}{2}$
\end{enumerate}In which $$\varpi(n,\ \eta)=\frac{7}{8}2^{-1-\frac{2}{n}}\frac{(n-2)^2}{n^2(n-1)^2}\cdot\alpha^2(n-1)\cdot\alpha^{\frac{2}{n}-2}(n)\cdot(\eta)^{2n+2}$$ and $\alpha(n)\{\omega(n)\}$ is the volume of n-dim unit
sphere $\{$unit Euclidean ball$\}$.
 \end{Def} The following is our main theorem:
 \begin{thm}{Pseudolocality of Compact Ricci Flow under  Bounds on
Curvature:}\\
\label{T2} For all $n\geq 3, p> \frac{n}{2},\ \eta,K,\ \tau$, there
exists  $\epsilon$ such that:
 \\
If $[g(t),M]$ is a compact solution to Ricci-Flow:
$\partial_tg=-2Ric$, , $0\leq t\leq (\epsilon r)^2$ and when $t=0$
the ball $B_0(x_0,r)$ satisfies $\digamma(n,p,K,\tau,\eta)$
assumption and $R\geq -r^2$ in $B_0(x_0,r)$, then :
\begin{enumerate}
\item $|Rm|(x)\leq\frac{1}{t}+(\epsilon r)^{-2}$
\item $VolB(x,t^{\frac{1}{2}})\geq C(n)t^{\frac{n}{2}}$
\end{enumerate}whenever
$0< t\leq (\epsilon r)^2$ and $d_t(x,x_0)\leq \epsilon r$.
\end{thm}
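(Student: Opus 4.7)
The strategy is to reduce Theorem~\ref{T2} to Perelman's pseudolocality theorem, using Deane Yang's Local Ricci Flow (LRF) as a preprocessing step that improves the initial data. Perelman's original statement requires an almost Euclidean isoperimetric constant on the initial ball together with a lower bound on scalar curvature. The $\digamma$ hypothesis is designed precisely to deliver the first of these: the volume lower bound~(1) together with the $L^{n/2}$ smallness of $Rm$ in~(2) should yield, via a Hoffman--Spruck / Croke--type argument, a near Euclidean Sobolev (and hence isoperimetric) inequality on $B_0(x_0,\tau r)$. The explicit threshold $\varpi(n,\eta)$, with its factors $\alpha(n-1)^2\,\alpha(n)^{2/n-2}\eta^{2n+2}$, is exactly the budget such an argument can afford, while the scalar bound $R\geq -r^2$ is given by hypothesis.

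After normalizing $r=1$ by parabolic rescaling, I would run the Local Ricci Flow on $B_0(x_0,r)$ with a spatial cutoff supported in this ball. Yang's theory produces short time existence and Shi--type smoothing of the metric under the $L^p$ Ricci bound in~(3); along this modified flow the near Euclidean isoperimetric constant and the scalar lower bound are preserved up to controlled errors. Because LRF coincides with the ambient compact Ricci flow where the cutoff equals one, the true metric $g(t_1)$ at some small $t_1\in(0,\omega_0^2)$ enjoys these bounds on an interior ball $B_{t_1}(x_0,\tau r/4)$: the curvature is now pointwise bounded, the isoperimetric constant is still close to Euclidean, and the scalar curvature is bounded below. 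At this stage $g(t_1)$ satisfies the full hypothesis of Perelman's theorem. Applying pseudolocality with $t_1$ as a shifted origin of time yields $|Rm|(x,t)\leq \frac{1}{t-t_1}+(\epsilon r)^{-2}$ together with the Euclidean volume lower bound on parabolic neighborhoods of $(x_0,t_1)$ up to time $(\epsilon r)^2$; combining with the Shi bounds on $[0,t_1]$ and absorbing $t_1$ into the constants then gives the stated conclusions once $\epsilon$ is chosen small in terms of $n,p,K,\eta,\tau$.

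The principal obstacle is the LRF versus genuine Ricci flow comparison on the interior. The spatial cutoff introduces perturbation terms that must not destroy either the near Euclidean isoperimetric constant, the scalar lower bound, or the $C^0$ closeness between the two metrics on a slightly shrunken ball. Controlling these requires a careful bootstrap combining the $L^p$ Ricci hypothesis, Yang's a priori estimates for LRF, and Shi--type derivative bounds on curvature; this is where the $L^p$ estimates for derivatives of curvature referred to in the abstract enter. A secondary but non-trivial issue is reconciling the two geometric scales present in the problem: the hypotheses $\digamma$ live on $B(x_0,\tau r)$ while the conclusion is asserted on $B(x_0,\epsilon r)$, which forces $\epsilon\ll\tau$ with an explicit quantitative dependence on the parameters.
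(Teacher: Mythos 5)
There is a genuine gap in the second half of your plan. You assert that ``LRF coincides with the ambient compact Ricci flow where the cutoff equals one,'' and then use this to conclude that the true Ricci flow metric $g(t_1)$ on an interior ball enjoys the smoothed bounds coming from the Local Ricci Flow, so that Perelman's theorem can be applied with $t_1$ as a shifted time origin. This identification is false: the LRF $\partial_t g=-2\chi^2\mathrm{Ric}$ and the genuine Ricci flow $\partial_t g=-2\mathrm{Ric}$ are distinct parabolic equations, and because parabolic evolution has infinite speed of propagation, the two solutions starting from the same $g(0)$ already diverge on $\{\chi\equiv 1\}$ for any $t>0$. Consequently the LRF cannot be used as a preprocessing step that delivers improved bounds for the actual compact flow $g(t_1)$, and the whole time-shifting step collapses. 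Moreover, even were it available, this step would be superfluous: once one has a near-Euclidean isoperimetric constant and a scalar curvature lower bound for $g(0)$, Perelman's Theorem~\ref{T3} applies immediately at $t=0$ with no smoothing preamble, which is precisely what the paper does.

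The paper's use of LRF is entirely different in character: it is applied purely as a static tool to the initial metric $g(0)$, never to the Ricci flow solution at positive time. In Proposition~\ref{noncollapsed}, the LRF is run from $g(0)$ on $B(x,1)$ with a cutoff $\chi\equiv 1$ near $x$; Theorem~\ref{T1} gives a pointwise $|\chi^2\mathrm{Ric}|_\infty$ bound that makes $g(t_0)$ Tensor-Lipschitz close to $g(0)$, while also making $|Rm|_\infty(g(t_0))$ bounded. Combining the bounded curvature of $g(t_0)$ with the preserved isoperimetric constant via Proposition~\ref{strong compactness}, one sees that $g(t_0)$ (and therefore $g(0)$, via the Tensor Lipschitz comparison) is close to a Euclidean domain in Tensor Lipschitz distance; Proposition~\ref{my iso bound} then upgrades this to a near-Euclidean isoperimetric constant for $g(0)$. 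Note that your Step 1, a direct Croke/Hoffman--Spruck argument from conditions (1) and (2), is also not what the paper does: the starting isoperimetric bound comes from Yang's Theorem~\ref{Deane's Iso bounds}, which uses (1) and (3) and yields a constant near that of the upper hemisphere, not near-Euclidean; the upgrade to near-Euclidean requires the static LRF smoothing and the Tensor Lipschitz comparison. Finally, the paper then applies Perelman's Theorem~\ref{T3} to the original compact Ricci flow at $t=0$, with no shifted origin and no comparison between LRF and RF solutions.
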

\begin{rmk}:
\begin{itemize}
\item This theorem is an $\epsilon-$ regularity type theorem for
Ricci Flow. The $\epsilon-$ regularity theorem for Einstein
4-Manifold is well known, one could refer to Theorem 4.4 in
\cite{andersongafa} and Theorem 0.8 in \cite{Tiancheeger} which are
motivations of  our work.
\item The $\digamma(n,p,K,\tau,\eta)$ assumption  ensure the assumption
on isoperimetric constant in Perelman's  Pseudolocality Theorem
\ref{T3} to be valid. Assumption 1 and 2 in
$\digamma(n,p,K,\tau,\eta)$ are definitely indispensable. The
example in section 2 shows that assumption 2 and 3 in
$\digamma(n,p,K,\tau,\eta)$ can not be simultaneously weaken.\\
According to Theorem 0.8 and section 11 in \cite{Tiancheeger}, one
could expect to drop Assumption 1 in $\digamma$ if one works  on the
normalized Ricci Flow: $\partial_tg=-2Ric+\frac{2r}{n}g$. It is also
interesting to investigate on whether Assumption 3 in
$\digamma(n,p,K,\tau,\eta)$ could be dropped.
\item Given a smooth compact Riemannian Manifold M, $\exists \ r_0$ sufficiently small such that the assumptions in $\digamma(n,p,K,\tau,\eta)$
 are all valid in any ball of radius less that $r_0$.

 \item The reason why we need $n\geq 3$ is that we should use Moser
 Iteration to Theorem \ref{T1}. The case $n=2$ should be similar
 with something modified, which we left to the interested reader to
 search on.
 \item The corresponding Finiteness of Diffeomorphism Types and Gromov-Hausdorff compactness of
 those manifolds satisfy the assumption of Theorem \ref{T2} are also
 valid. We will not state them here.
\end{itemize}  \end{rmk}

   The relationship between integral bounds on curvature and Isoperimetric constant has been studied for long. One could refer to the interesting works of
  \cite{gromov}, \cite{croke}, \cite{gallot}, \cite{deane}.

The invention of Local Ricci Flow really enable one to work on this
more conveniently. In \cite{deane}, using proper cutoff function
$\chi$ on a Riemannian Manifold, Deane Yang considered the local
deformation of the metric along Local Ricci Flow:
    \[ \frac{\partial g}{\partial t}=-2\chi^2Ric\]
      The short time existence of LRF is proved in \cite{deane} ,  one could also refer to
\cite{li}. \\
Under LRF, one could see some important  informations on the local
metric structure   under local integral bounds of curvature. Most important of all, LRF is able to control the local integral norm of curvature $\int_{B(x.r)}|Rm|^{\frac{n}{2}}$. Thus it's natural to use LRF to study the pseudolocality phenomenon of Global Ricci Flow.  \\

     In section 4 of  this paper we study the smooth extension of the Local Ricci Flow. Namely, we  provide the details of the  $L^p$ bounds on derivative of
     curvature.
It is unlikely to directly get bounds for
     $\int|\nabla^m
     Rm|^2,\ m\geq 0$ as claimed in Theorem 9.2 in Page 93 of \cite{deane},
     the reason is  illustrated in Theorem \ref{smooth extension of LRF} and it's remark. Our method use induction which is based on $L^p$ estimate. \\
\\
 One might doubt whether  LRF is necessary to  prove Theorem \ref{T2} or not. We think the rescaling approach initiated in \cite{anderson} might  also work to
     get Proposition \ref{strong noncollapsed}, which is crucial
     to Theorem \ref{T2}. However, under the bound of $|Ric|_p,\
     p>\frac{n}{2}$, the best possible regularity for the coordinates are $C^{1,\ \alpha}$ and
     $W^{3,\ p}$. Then one must argue the rescaled limit is flat using
     the assumption that
     $|Rm|_{\frac{n}{2}}$ is sufficient before we take limit. This
     is challenging to me because the limit has little regularity.
     Anyway we think there might be an approach without LRF.
     \\

In a word, in this short paper we mainly prove Theorem \ref{T2} and
study the smooth extension of Local Ricci Flow in section 5 to
further explain Theorem \ref{T1} which is one of the main theorems
in
     \cite{deane}. \\

The arrangement of this paper is: In Section 2 we compute an example
to get a feeling on this problem and see which assumptions are
indispensable. In section 3 we prove Theorem \ref{T2}. In
Section 4 we do $L^p$ estimates and prove Theorem \ref{smooth extension of LRF} on smooth extension of Local Ricci Flow.\\ \\
Acknowlegdement: I would like to thank my advisor Chen Xiuxiong for
introducing me to the interesting area of Pseudolocality of Ricci
Flow and Local Ricci Flow. I would not  be able to finish these
without his useful discussions and encouragement. I  am grateful to
Wang Bing for reading this paper carefully and provide insightful
comments. I also would like to thank Li Hao Zhao for reading  my
paper and Ye Li for reading section 4 carefully.

\section{An Example}
    I would like to point out that the conclusions of Theorem \ref{T2} will not hold
    if
    assumption 2 and 3 in $\digamma(n,p,K,\tau,\eta)$
   are weakened. The counterexample comes from
   S. Angenent and D. Knopf \cite{sigurd}. \\ \\
   More precisely, we have the following statement:
   \begin{clm}In dimension 4, there is a family of Ricci-Flow $g_{G}(t), \ x_G$ , constants $c, K$ independent of $G$
such   that:
   \begin{enumerate}
\item $|Rm|_{2[B_0(x_G,c)]}\leq K$
\item $R\geq 0$ in $B_0(x_G,c)$
\end{enumerate}

 And $g_{G}(t)$ will become singular before time $G$.
 \end{clm}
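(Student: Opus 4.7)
My plan is to produce the family by rescaling a single Angenent--Knopf neckpinch. Start from a rotationally symmetric ``dumbbell'' metric $\bar g_0=ds^2+\psi(s)^2 g_{S^3}$ on $S^4$ for which the Ricci flow $\bar g(t)$ develops a neckpinch singularity at some finite time $\bar T>0$; such initial data are constructed in \cite{sigurd}. The two lobes can be arranged to carry large positive scalar curvature, and near a thinning neck the positive $n(n-1)\psi^{-2}$ term dominates the scalar curvature formula; together these let me arrange $R_{\bar g_0}\geq 0$ everywhere, so Hamilton's scalar maximum principle preserves $R_{\bar g(t)}\geq 0$ for all $t\in[0,\bar T)$. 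Set $K_0:=\int_{S^4}|Rm_{\bar g_0}|^2\,dv_{\bar g_0}<\infty$.

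For each $G>0$, choose $\lambda=\lambda(G)>0$ with $\lambda^2\bar T<G$ and define the rescaled Ricci flow
\[
g_G(t)\ :=\ \lambda^2\,\bar g(t/\lambda^2)
\]
on $S^4$. Standard scaling shows $g_G(t)$ satisfies $\partial_t g_G=-2\,Ric(g_G)$ and becomes singular exactly at time $\lambda^2\bar T<G$. Pick any basepoint $x_G$ (e.g.\ one of the poles) and set $c:=1$. The decisive observation is the scale invariance of $|Rm|^{n/2}\,dv$ in dimension $n$: since $|Rm|_{\lambda^2 g}=\lambda^{-2}|Rm|_g$ and $dv_{\lambda^2 g}=\lambda^n\,dv_g$, in $n=4$ the combination $|Rm|^2\,dv$ is invariant. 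Consequently
\[
\int_{B_0(x_G,c)}|Rm|_{g_G(0)}^2\,dv_{g_G(0)}\ \leq\ \int_{S^4}|Rm_{\bar g_0}|^2\,dv_{\bar g_0}\ =\ K_0,
\]
which gives condition (1) with $K:=K_0$ independent of $G$, and $R_{g_G(0)}=\lambda^{-2}R_{\bar g_0}\geq 0$ gives condition (2) (again $G$-independent).

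The main technical obstacle is really just confirming that Angenent--Knopf data can be chosen with $R\geq 0$ on all of $S^4$; once this is granted, the remainder is a routine scaling computation. It is worth emphasizing why this example does not contradict Theorem \ref{T2}: the Ricci bound in assumption 3 is \emph{not} scale invariant, since $\int|Ric|^p\,dv$ scales as $\lambda^{n-2p}=\lambda^{4-2p}\to\infty$ as $\lambda\to 0$ for every $p>2$. Thus the rescaled family $g_G(0)$ fails assumption 3 of $\digamma(n,p,K,\tau,\eta)$ in precisely the way one would predict, confirming that assumptions 2 and 3 cannot be simultaneously weakened while keeping a pseudolocality conclusion.
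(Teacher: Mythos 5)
Your rescaling argument does verify the claim as literally worded: $\int|Rm|^2\,dv$ is scale invariant in dimension $4$, the sign of $R$ is scale invariant, singular time scales like $\lambda^2$, and $B_0(x_G,1)$ eventually exhausts all of $S^4$, so the family $g_G(t)=\lambda^2\bar g(t/\lambda^2)$ with $\lambda^2\bar T<G$ satisfies (1), (2) and blows up before time $G$. But this is a genuinely different route from the paper's, and it misses the content of the claim. The paper fixes the ball scale (near the neck, $g_0=ds^2+(G^2+s^2)\,dS^3$ on $|s|\leq c$ with $c$ fixed) and lets only the neck radius $G\to 0$; then the metric converges pointwise to a cone, $\int_{B_0(x_G,c)}|Rm|^2$ stays bounded (the paper even computes the limit $2^{11/4}\pi\sqrt{3}$), and crucially $B_0(x_G,c)$ remains \emph{volume non-collapsed} uniformly in $G$, with the volume ratio around the neck point tending to a positive fraction of $w(4)$. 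That non-collapsing is what makes the example serve the surrounding Proposition: it shows the conclusion of Theorem~\ref{T2} can fail with assumption~1 of $\digamma$ intact, so that assumptions~2 and~3 cannot be simultaneously weakened.

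Your rescaled family instead shrinks all of $S^4$ to a point: for $\tau$ fixed, $\mathrm{Vol}_{g_G(0)}(B(x_G,\tau))\sim\lambda^4\,\mathrm{Vol}_{\bar g_0}(S^4)\to 0$, so assumption~1 fails as well. You correctly observe that $\int|Ric|^p\,dv$ is not scale invariant and blows up, but the more fundamental failure of the volume lower bound goes unremarked, and it is precisely that failure which makes the scaled family vacuous as a counterexample to Theorem~\ref{T2}. Parabolic rescaling of any compact Ricci flow produces a family with arbitrarily small singular time and constant $\int|Rm|^2$, so under your reading the claim is nearly tautological. What the paper's construction achieves, and what your argument does not, is to keep the initial ball on a fixed scale with uniform non-collapsing while the flow still becomes singular quickly; you should not substitute the scaling trick for the explicit $(G^2+s^2)$-neck family if the claim is to play its intended role.
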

   Assume $g_0=ds^2+\psi^2(s)dS^n,s\in[-\frac{\pi}{2},\frac{\pi}{2}]$
   could be extended to be a smooth rotationally symmetric
   Riemannian metric on $S^{n+1}$. We define "necks" to be the local
   minimum of $\psi$, "bumps" to the local maximum of $\psi$. ("local" means  the  boundary points $-\frac{\pi}{2}$ and $\frac{\pi}{2}$ are not
   included). It is well know that:\\ \\
   Given a point p, if $\{\frac{e_1}{\psi},...\frac{e_n}{\psi}\}$ is  orthonomal
   at p and is tangent to the $S^n$ fibre, then:
   $$K_{N}\triangleq R(\partial s,\frac{e_i}{\psi},\frac{e_i}{\psi},\partial s)=-\frac{\ddot{\psi}}{\psi},\ K_{T}=R(\frac{e_j}{\psi},\frac{e_i}{\psi},\frac{e_i}{\psi},\frac{e_j}{\psi})=\frac{1-(\dot{\psi})^2}{(\psi)^2}$$
\begin{Def} $a(s)=\psi^2(K_N-K_T)$, $r_{max}\triangleq |\psi(s)|$
where s is the smallest bump, and $r_{min}\triangleq |\psi(s)|$
where s is the smallest neck.
\end{Def}
Then we could quote the theorem of S.Angenent and D.Knopf:
\begin{thm}
\label{SD} Assume $g_0=ds^2+\psi^2(s)dS^n,\
s\in[-\frac{\pi}{2},\frac{\pi}{2}]$ is a smooth rotationally
symmetric Riemannian metric on $S^{n+1}$. Assume g has one neck, at
least one bump and satisfies:
\begin{enumerate}
\item $K_T>0$, or equivalently $|\dot{\psi}(s)|< 1$.
\item $R>0$
\item $|a|\leq \mu$
\item $\frac{r^2_{max}}{r^2_{min}}\geq \frac{2\mu+2n}{n-1}$
\end{enumerate}
Then $\exists \ \sigma_1(g),\sigma_2(g)$ such that if we evolve $g$
by Ricci-Flow:
$$\partial_tg=-2Ric$$
before time $\frac{r^2_{min(0)}}{n-1}$ we will have:
$$r_{min}(t)\rightarrow 0$$ In particular, $K_T$ at the neck points (\ the point might change along the
flow) will tends to $+\infty$ before  time
$\frac{r^2_{min(0)}}{n-1}$.\\
Furthermore, at the sigular time the volume of the whole manifold
satisfy
$$\sigma_1(g)\leq Vol(g_t)\leq \sigma_2(g)$$
\end{thm}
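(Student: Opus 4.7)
The plan is to use the rotational symmetry (preserved along the flow) to reduce the Ricci flow to a scalar parabolic equation for the warping function $\psi(s,t)$:
$$\partial_t\psi \;=\; \psi_{ss} \;-\; (n-1)\,\frac{1-\psi_s^2}{\psi},$$
and then derive from this the evolution equations for $K_T$, $K_N$, and for the quantity $a=\psi^2(K_N-K_T)=-\psi\psi_{ss}+\psi_s^2-1$. I would first check that hypotheses (1)--(3) are propagated along the flow by the parabolic maximum principle: $K_T>0$ and $R>0$ have favorable reaction terms, while $|a|\le\mu$ requires the delicate estimate. I expect the preservation of $|a|\le\mu$ to be the main obstacle, because $a$ is second-order in $\psi$ and its evolution couples nonlinearly to $\psi_s$ and $\psi_{ss}$; concretely one computes a heat-type equation for $a$ and absorbs the gradient and zero-order terms into a friendly sign at a boundary maximum.

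Granted that the hypotheses persist, the pinching itself is a short ODE argument at the neck. At a local minimum $s_\ast(t)$ of $\psi$ one has $\psi_s=0$ and $\psi_{ss}\ge 0$, so the identity for $a$ gives $\psi\psi_{ss}=-1-a$. Plugging into $\partial_t\psi^2=2\psi\psi_t$ at the neck yields
$$\frac{d}{dt}\,r_{min}^2(t) \;=\; -2n-2a\big|_{s_\ast(t)} \;\le\; -2(n-1),$$
where the last inequality uses the $a$-bound together with condition~(4), which prevents the smallest bump from either collapsing into the neck or migrating across it during the relevant time interval; the ratio $r_{max}^2/r_{min}^2\ge(2\mu+2n)/(n-1)$ is exactly the quantity needed to guarantee that the location $s_\ast(t)$ remains a genuine interior local minimum until extinction. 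Integrating the differential inequality forces $r_{min}(t)\to 0$ strictly before $t=r_{min}^2(0)/(n-1)$; since $K_T=(1-\psi_s^2)/\psi^2$ is bounded below by $c/\psi^2$ at the neck, $K_T$ necessarily blows up as $r_{min}\to 0$.

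For the volume bounds, the upper estimate $\mathrm{Vol}(g_t)\le\mathrm{Vol}(g_0)=:\sigma_2(g)$ is immediate from $\partial_t\mathrm{Vol}=-\int R\,dV<0$ together with hypothesis (2). For the lower bound I would localize the volume as a weighted integral $\int\psi^n\,ds$ over a definite arclength neighborhood of the smallest bump; running the same ODE analysis but at the bump (where $\psi_s=0$, $\psi_{ss}\le 0$) shows that, under the same $a$-bound, $\psi$ at the bump cannot decrease faster than a controlled rate, so it remains above $r_{max}(0)/2$ on a fixed arclength window up to the singular time. The resulting contribution furnishes a uniform $\sigma_1(g)>0$. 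Throughout, the sole nontrivial analytic input is the maximum principle for $a$; everything else is one-dimensional comparison and a direct ODE integration.
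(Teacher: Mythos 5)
The paper does not prove this theorem at all: it is explicitly quoted as a result of Angenent and Knopf (``Then we could quote the theorem of S.\ Angenent and D.\ Knopf'') and used as a black box for the counterexample in Section 2, so there is no in-paper proof for your sketch to be compared against. Judged on its own, your skeleton (reduce to the scalar parabolic equation for $\psi$, propagate hypotheses by the maximum principle, run an ODE comparison at the neck) is the right shape, but the crucial ODE step is wrong in a way that would not close.

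At a neck you have $\psi_s=0,\ \psi_{ss}\ge 0$, so from $a=-\psi\psi_{ss}+\psi_s^2-1$ you get $a\le -1$, and correctly $\frac{d}{dt}r_{\min}^2=-2n-2a$. But $a\le -1$ gives $\frac{d}{dt}r_{\min}^2\ge -2(n-1)$, which is the \emph{opposite} direction of what you wrote; and the hypothesis $|a|\le\mu$ only gives $a\ge -\mu$, hence $\frac{d}{dt}r_{\min}^2\le -2n+2\mu$, which is not even negative unless $\mu<n$ and cannot produce the stated extinction time without an unwarranted assumption $\mu\le 1$. The hypothesis you should invoke at the neck is $R>0$: since $R=-2n\,\psi_{ss}/\psi+n(n-1)(1-\psi_s^2)/\psi^2$, positivity at the neck forces $\psi\psi_{ss}<\tfrac{n-1}{2}$, hence $\frac{d}{dt}r_{\min}^2=2\psi\psi_{ss}-2(n-1)<-(n-1)$, and integrating this gives $r_{\min}\to 0$ strictly before $t=r_{\min}^2(0)/(n-1)$, which is exactly the quoted time. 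Condition (4) does not enter the sign of this ODE; its role is to rule out the alternative scenario in which the whole manifold rounds out and shrinks globally before the neck pinches. Finally, the volume lower bound should come from a curvature bound away from the neck (via the $a$-estimate and $K_T>0$ on a fixed region near the bump), not from an ODE for $\psi$ at the bump alone: bounding $\psi$ below at one point does not by itself give a uniform lower volume bound without also controlling the arclength measure there.
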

Although it is trivial, I would like use the following things as an
example of what happens: \begin{prop} Theorem \ref{T2} will not hold
if assumptions 2,3
   in $\digamma(n,p,K,\tau,\eta)$ are simultaneously weaken.  \end{prop}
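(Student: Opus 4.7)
The plan is to use the family $\{g_G(t), x_G\}$ supplied by the preceding Claim as counterexamples to Theorem \ref{T2} under the weakened $\digamma$. The Claim already realizes the weakenings of assumptions 2 and 3 explicitly: it imposes $|Rm|_{2[B_0(x_G,c)]} \leq K$ in place of the sharp threshold $|Rm|_{n/2} \leq \varpi(n,\eta)$, and only $R \geq 0$ in $B_0(x_G,c)$ in place of an $L^p$ Ricci bound. The scalar curvature condition in Theorem \ref{T2}, $R \geq -c^2$, is automatic from $R \geq 0$, so nothing else is required on that front.

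What remains, in order to match the hypotheses of Theorem \ref{T2} under the weakened $\digamma$, is to verify assumption 1 at $x_G$ with a constant $\eta$ independent of $G$. Since $x_G$ sits at a bump of the rotationally symmetric profile $\psi_G$ where $\psi_G(s) \approx r_{\max}$ with $r_{\max}$ fixed independent of $G$, and since $c$ can be taken smaller than a definite fraction of the $s$-distance between bump and neck, the initial metric $g_G(0)$ on $B_0(x_G,c)$ is $C^1$-close to the product $(-c,c) \times S^{3}(r_{\max})$. This comparison yields $Vol_0\,B(x_G,\tau c) \geq \omega(4)(\eta\tau c)^{4}$ with $\eta$ uniform in $G$, completing the verification that $g_G(0)$ satisfies the weakened $\digamma(4,p,K,\tau,\eta)$.

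Now suppose, toward contradiction, that Theorem \ref{T2} continues to hold under the weakened integral hypotheses with some universal $\epsilon = \epsilon(n,p,K,\tau,\eta) > 0$. By the Claim, choosing $G < (\epsilon c)^{2}$ produces a compact Ricci flow $g_G$ that becomes singular at some $T_G \leq G < (\epsilon c)^{2}$, so $g_G$ fails to be a smooth compact solution on $[0, (\epsilon c)^{2}]$. However, Theorem \ref{T2}'s conclusion demands the bound $|Rm|(x,t) \leq 1/t + (\epsilon c)^{-2}$ for all $t \in (0,(\epsilon c)^{2}]$, which in particular presupposes the existence of the flow up to $t = (\epsilon c)^{2}$. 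Letting $G \to 0$ shows that no single $\epsilon$ can serve for the entire family, so Theorem \ref{T2} cannot hold under the simultaneous weakening of assumptions 2 and 3.

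The main technical point, and the only obstacle I anticipate, lies on the construction side of the Claim: one must arrange the AK profiles $\psi_G$ so that $r_{\min}(0) = O(\sqrt{G})$ while $r_{\max}(0)$, the $L^{2}$ curvature bound $K$ on $B_0(x_G,c)$, and the lower bound $R \geq 0$ all remain uniform in $G$. The ratio condition (4) of Theorem \ref{SD} is automatic once $r_{\min}$ is small enough; conditions (1)--(3) can be realized by interpolating $\psi_G$ smoothly between a fixed bump profile and a narrow neck of width $\sqrt{G}$, controlling $\dot\psi_G$ and $\ddot\psi_G$ so that $|\dot\psi_G| < 1$, $R > 0$, and $|a| \leq \mu$ hold with $\mu$ independent of $G$. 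Once this uniform construction is in place, the proof is complete.
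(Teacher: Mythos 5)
There is a genuine gap, and it lies in the placement of $x_G$. In the paper's construction the center $x_G$ sits \emph{on the neck} (the region $|s|\le c$ around $s=0$, where $\psi=\sqrt{G^2+s^2}$ has its minimum); that is why the paper computes $|Rm|_{2[\,|s|\le c\,]}$ and observes the curvature and volume-ratio behavior of the emerging cone at $s=0$. You instead place $x_G$ at a bump and take $c$ small enough that $B_0(x_G,c)$ avoids the neck. This breaks the argument at two places. First, the conclusion of Theorem \ref{T2} is a \emph{local} curvature bound on $B_t(x_0,\epsilon r)$; if $x_0=x_G$ is a bump far from the neck, the neck singularity does not violate this bound near $x_G$, so no contradiction arises from the blow-up. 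Second, your fallback --- that the flow ceasing to exist before $(\epsilon c)^2$ already falsifies the conclusion --- misreads the statement: the existence of the flow on $[0,(\epsilon r)^2]$ is part of the \emph{hypothesis} of Theorem \ref{T2}, not its conclusion, so a flow that quits early simply falls outside the scope of \ref{T2} and yields nothing. The paper avoids both problems precisely because $x_G$ is at the neck, so $|Rm|(x_G,t)\to\infty$ as $t\to T_G^-$ while $d_t(x_G,x_G)=0\le\epsilon c$, directly violating the curvature estimate of \ref{T2} at times just before $T_G<(\epsilon c)^2$.

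There is a further sign that the bump is the wrong place: away from the neck the $L^{2}$ norm of $Rm$ is small, so a small ball around a bump would in fact satisfy the \emph{original} gap assumption 2, $|Rm|_{n/2}\le\varpi(n,\eta)$, and is therefore not even a candidate counterexample under the weakened hypotheses. What the paper needs, and verifies, is that the ball around the neck has $|Rm|_{2}$ bounded but \emph{above} the gap $\varpi(4,1)$ (the concentration gives $\lim_{G\to 0}|Rm|_{2}\approx 2^{11/4}\pi\sqrt{3}>100\,\varpi(4,1)$) and $|Ric|_{p}\to\infty$ for $p>2$, so that assumptions 2 and 3 are each genuinely weakened, while assumption 1 still holds because the volume ratio at the neck converges to its Euclidean value as $G\to 0$. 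Your proposal substitutes a product-metric volume check at the bump for the cone-structure check at the neck, and both the counterexample and the logic of the contradiction are lost in the process.
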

\begin{proof}Actually S.Angenent and D.Knopf constructed an example
 which satisfy the assumptions in
Theorem \ref{SD}. According to the examples, we could find a
4-dimensional example:\\
 $\exists\ c$ such that: $\psi$ is even function and when $s\leq
c$ we have $g_0=ds^2+(G^2+s^2)dS^3$, for  G sufficiently small.  The
global picture looks like a dumbell.  We compute $|K_N|_{2},
|K_T|_{2}$ to see:
\begin{clm}when G tend to 0
\begin{enumerate}

\item $lim_{G\rightarrow 0}|Rm|_{2[p||s(p)|\leq
c]}$ exists but is far bigger than the energy gap in assumption
$\digamma(n,p,K,\tau,\eta)$.
\item $|Ric|_{p[p||s(p)|\leq c]}$ tends to $\infty$, $p>2$.
\end{enumerate}
  \end{clm}
  First:  $K_N=-\frac{G^2}{(G^2+s^2)^{2}}$ and $K_T=\frac{G^2}{(G^2+s^2)^{2}}$
  Then:
  \begin{eqnarray*}
  & &\int_{(p||s(p)|\leq c)}|K_N|^{2}dVol
  \\&=&2\ \alpha(3)\int^c_0
  [\frac{G^2}{(G^2+s^2)^{2}}]^{2}(G^2+s^2)^{\frac{3}{2}}ds
  \\&\leq&2^{\frac{7}{2}} \alpha(3)\int^c_0\frac{G^{4}}{(G+s)^5}
  \\&=& 2^{\frac{3}{2}}  \alpha(3)[1-\frac{G^4}{(G+c)^4}]
\end{eqnarray*}
also we have:
$$\int_{(p||s(p)|\leq c)}|K_N|^{2}dVol=\int_{(p||s(p)|\leq c)}|K_T|^{2}dVol$$
Then:
$$|Rm|^2_2\leq 4\cdot6\cdot2^{\frac{3}{2}}  \alpha(3)[1-\frac{G^4}{(G+c)^4}]$$
Use $ \alpha(3)=2\pi^2$, we see:

  $$lim_{G\rightarrow 0}|Rm|_{2[p||s(p)|\leq c]}=2^{\frac{11}{4}}\pi\sqrt{3}$$
  It means the Riemannian curvature has a concentration on the
  submanifold $(p|s(p)=0)$.
  Thus it's easy to see $|Ric|_{p[p||s(p)|\leq c]}$ tends to $\infty$,
  $p>2$.\\

  One see that when $G\rightarrow 0$, the metric tensor goes to a
cone pointwisely. Then the volume ratio around the center goes to
$w(4)$.\\
  Using $\alpha(3)=2\pi^2,\ \alpha(4)=\frac{8\pi^2}{3}$, in our case we see
  the energy gap $\varpi(4,\ 1)$ in assumption $\digamma(n,p,K,\tau,\eta)$
  is about   $\frac{7}{8}\pi\cdot 3^{-\frac{1}{2}}2^{-6}$. Thus $lim_{G\rightarrow 0}|Rm|_{2[p||s(p)|\leq
  c]}>100\varpi(4,\ 1)$.
  \\ \\
  Thus  we see the claim holds i.e: according to  Theorem \ref{SD} of Angenent and Knopf, Ricci-Flow starting
  from $g_0=ds^2+(G+s^2)dS^3$ will develop singularity
  around the neck within time $G$. Since the metric is reflection symmetric, and the number of necks will not grow, we see that  after the flow starts,
  the neck does not move, and the size of which will shrink to 0. But c is fixed, thus when G is sufficiently small the solution
  does not satisfy the conclusion of Theorem \ref{T2}.\end{proof}

\begin{rmk}{Something about the Isoperimetric Constant in the above
Example:}\\
One see that when $G\rightarrow 0$, the metric  goes to a cone in
some sense. However, we can not say anything about the geometric
convergence.  I would like to point out that:
\begin{center}For those domain $\Omega_N$ which are bounded by noncontractible $S^3$ in $ [p|s(p)\leq c]$ , the quotients :
 $$\frac{Vol^{\frac{3}{4}}(\Omega_N)}{Vol(\partial \Omega_N)}$$
 are uniformly bounded from above.
\end{center}
It's easy to see that it suffices to bound the above quotients for
those domains whose boundary  are level set of the variable $S$. We
compute $\forall \ b$:
  \begin{eqnarray*}
  & &Vol(p|0\leq s(p)\leq b)
  \\&\simeq&\int^b_0
 [G^2+s^2]^{\frac{3}{2}}ds
  \\&\simeq& [(G+b)^4-G^4]
  \end{eqnarray*}
  Thus consider the domain $\Omega_b\triangleq [p\ |\ s(p)\leq b]$ , we have:

  $$\frac{Vol^{\frac{3}{4}}(\Omega)}{Vol(\partial \Omega)}
  \simeq \frac{[(G+b)^4-G^4]^{\frac{3}{4}}}{ [G^3+b^3]}
$$And $\frac{[(G+b)^4-G^4]^{\frac{3}{4}}}{ [G^3+b^3]}$ is uniformly
bounded from above independent of $G, b$.
 Then we see that for those domain $\Omega_N$ which are bounded by noncontractible $S^3$ in $ [p|s(p)\leq c]$ , the quotients :
 $$\frac{Vol^{\frac{3}{4}}(\Omega_N)}{Vol(\partial \Omega_N)}$$
 are uniformly bounded from above.
 For those domain $\Omega_C$ which are bounded by contractible $S^3$ in $[p|s(p)\leq
 c]$, the quotients $\frac{Vol^{\frac{3}{4}}(\Omega_C)}{Vol(\partial
 \Omega_C)}$ are not known by me yet because there are very hyperbolic parts around the neck no matter how small $G$ is. However,  I think it's also uniformly bounded independent of G and its own size and shape.

\end{rmk}

\section{Set up of Local Ricci flow and Proof of the Main Theorem}
Notation:  $\Omega \Subset M$ means that $\Omega$ is a relatively
compact
 domain of a manifold $M$ (M which might have boundary) ,  $\partial \Omega$ is
 smooth  and $\Omega\backslash\partial\Omega$ is open.
\begin{Def} Suppose $[M,g]$ is   a complete Riemannian Manifold, then
the Local Ricci flow is the deformation of g in the direnction of
$-2\chi^2Ric,\chi\in C^{\infty}_0(M)$ i.e Local Ricci Flow is the
following Geometric PDE:
$$\frac{\partial g}{\partial t}=-2\chi^2Ric, \chi\in
C^{\infty}_0(M)$$ \end{Def} The short time existence of LRF is
proved in \cite{deane}.  One could also refer to \cite{li} .
\begin{Def}Tensor Lipshitz Distance:
\\ Suppose $(M,g_1)$ and $(M,g_2)$  are two compact Riemannian Manifolds with boundary which are diffeomorphic to each other, denote $\dot{M}$ as the interior of $M$. Assume $g_1,g_2$ are nondegenerate on
$\dot{M}$, i.e:
$$\forall \ V_p\neq 0 \in \ T_p\dot{M},\ p\in\dot{M}\  \textrm{we have }\ g_i(V_p,V_p)>0, i=1,2.$$
Then we define the Tensor Lipshitz  Distance between $(M,g_1)$ and
$(M,g_2)$ to be:
$$\displaystyle d_{TL}[(M ,g_1), (M,g_2)]=sup_{p\in \dot{M}}sup_{V_p\in T_p\dot{M}}|log\frac{g_1(V_p,V_p)}{g_2(V_p,V_p)}|$$
\end{Def}
\begin{rmk} The notion of Tensor Lipshitz Distance is more flexible and
local. $(M,g_1)$ and $(M,g_2)$ are not required to be geodesic
convex . But if $(M,g_1)$ and $(M,g_2)$ are all geodesic convex in
their complete manifolds, then Tensor Lipshitz Distance is
equivalent to the ordinary Lipshitz Distance whose definition could
be found in Green-Wu \cite{green wu}.
\end{rmk}

\begin{lem}
\label{TL distance} For a linear space $R^n$, consider the Euclidean
inner product $<,>$, and consider a nondegenerate symmetric matrix
$g_{ij}$, then the following things are equavalent:
\begin{enumerate}
\item $g_{ij}v_iv_j$ is close to  1 for every  $v$ s.t \ $|v|^2=1$
(\ $|\ |^2$ is the Euclidean  square norm).
\item The matrix $g_{ij}$ is close to the identity matrix $I$.
\item $detg_{ij}$ is close to 1, and the determinant of g on any
codimension 1 linear subspace is close to 1.
\end{enumerate}
\end{lem}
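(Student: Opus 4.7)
The whole plan is to diagonalize $g_{ij}$ and then translate each of the three conditions into an equivalent statement about the eigenvalues.

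Since $g_{ij}$ is real symmetric, I would apply the spectral theorem to choose a Euclidean-orthonormal eigenbasis $e_1,\ldots,e_n$ with eigenvalues $\lambda_1,\ldots,\lambda_n$. For any unit Euclidean vector $v=\sum v_ie_i$, one has $g_{ij}v_iv_j=\sum_i\lambda_iv_i^2$, and as $v$ ranges over the unit sphere this quadratic form takes precisely the values in $[\min_i\lambda_i,\max_i\lambda_i]$. Hence condition (1) is equivalent to the quantitative statement that every $\lambda_i$ is close to $1$. Condition (2), that the matrix is close to the identity (say in the Euclidean operator norm), reads $\max_i|\lambda_i-1|$ is small, which is clearly the same condition. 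This settles (1) $\Leftrightarrow$ (2).

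For (3), I would observe that $\det g_{ij}=\prod_i\lambda_i$ and that the determinant of $g$ restricted to the hyperplane $e_k^{\perp}$, computed with respect to any Euclidean-orthonormal basis of that hyperplane, equals $\prod_{i\neq k}\lambda_i$. The implication (2) $\Rightarrow$ (3) is immediate: if the $\lambda_i$ are all close to $1$, then both $\prod_i\lambda_i$ and the restriction of $g$ to any codimension-$1$ subspace are close to $1$ and to the identity, respectively. For the converse, I would apply the hypothesis of (3) specifically to the $n$ hyperplanes $e_k^{\perp}$ and then write
\[
\lambda_k \;=\; \frac{\prod_i\lambda_i}{\prod_{i\neq k}\lambda_i},
\]
a quotient of numbers close to $1$, hence itself close to $1$; this gives (3) $\Rightarrow$ (2).

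The only point that requires a little care is pinning down what is meant by "the determinant of $g$ on a codimension-$1$ linear subspace," namely the determinant of the restricted bilinear form in an Euclidean-orthonormal basis of that subspace, which is an invariant of the subspace and agrees with the product of eigenvalues in the special case of the coordinate hyperplanes $e_k^{\perp}$. I do not expect any genuine obstacle here — once the diagonalization is in place the lemma is a short linear-algebra exercise, and in each direction one gets a quantitative bound with an explicit dimension-dependent constant.
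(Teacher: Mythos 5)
Your proof is correct and follows essentially the same route as the paper: diagonalize $g$ by the spectral theorem, translate each condition into a statement about the eigenvalues, and for $(3)\Rightarrow(2)$ use the quotient identity $\lambda_k=\prod_i\lambda_i/\prod_{i\neq k}\lambda_i$ applied to the coordinate hyperplanes of the eigenbasis. The only cosmetic difference is that the paper runs the cyclic chain $1\Rightarrow2\Rightarrow3\Rightarrow1$ and, for $1\Rightarrow2$, bounds $|g-I|=|h(D-I)h^{t}|\leq n|D-I|$ in a Frobenius-type norm, whereas you observe directly that in operator norm closeness to $I$ is literally $\max_i|\lambda_i-1|$ small; both yield the same conclusion up to dimension-dependent constants, which is all the informally stated lemma requires.
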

\begin{proof} $1\longrightarrow 2$: \\Diagnalize $g$ using the orthogonal
matrix h, namely:

$g=h\cdot Diag(\lambda_1,\lambda_2......\lambda_n)\cdot h^{t}$ in
which $Diag(\lambda_1,\lambda_2......\lambda_n)$ is:
\begin{displaymath}
\left [
\begin{array}{cccccc}
\lambda_1   &               &            &      &        &   \\
            & \lambda_2     &            &      &        &   \\
            &               &  \ddots    &      &         &   \\
            &               &            & \lambda_j     &        &   \\
            &      &        &               &  \ddots      &   \\
            &      &        &      &        &  \lambda_n \\
\end{array}
\right ]
\end{displaymath}
 $(1)$ says that $\lambda_1,\lambda_2......,\lambda_n$ are all close to $1$,
 and because h is orthogonal then $|h|^2=trhh^t=n$ and:
 $$|g-I|=|h(D-I)h^t|\leq n|D-I|$$ then g is close
 to identity matrix.\\
 $2\longrightarrow 3$ :This is obvious.\\
 $3\longrightarrow 1$: Denote $e_j$ is the eigenvector associated to
 $\lambda_j$,then we consider the hyperplane H perpendicular to $e_j$,
 then the determinant of g on H is: $$\lambda_1,\lambda_2...\hat{\lambda j}...\lambda_n$$ in which
 $\hat{\lambda_j}$ means that the product is without $\lambda_j$. By
 the following:
  $$\lambda_j=\frac{\lambda_1,\lambda_2...\lambda_j...\lambda_n}{\lambda_1,\lambda_2...\hat{\lambda_j}...\lambda_n}$$
  Then the assumption that $\lambda_1,\lambda_2...\hat{\lambda_j}...\lambda_n$  and $\lambda_1,\lambda_2...\lambda
 j...\lambda_n$ are   close to 1 tells us that $\lambda_j$ is close to 1.
 j is arbitrary, then we are done.
 \end{proof}

\begin{lem}
\label{uniform convergence inply lipshitz convergence} If we have a
sequence of positive definite matrix function $g^k_{(ij)}$ defined
on $\dot{B_e}(R)$ which satisfy:
\begin{enumerate}
\item $g^k_{(ij)}\rightarrow g^{\infty}_{(ij)}$ uniformly on
$\dot{B_e}(R),\ \forall\ i,j$
\item $\exists \ \lambda_{min}>0$ such that $inf_{k}inf_{p\in \dot{B_e}(R)}g^k_{(ij)}\geq \lambda_{min} I$
\end{enumerate}
Then $g^k_{(ij)}\rightarrow g^{\infty}_{(ij)}$ w.r.t Tensor Lipshitz
distance.
\end{lem}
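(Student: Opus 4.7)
The plan is to reduce the Tensor Lipshitz distance to a uniform comparison of the quadratic forms $g^k(V,V)$ and $g^\infty(V,V)$ on Euclidean unit vectors, then estimate the log-ratio directly.

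First, since the ratio $g^k(V_p,V_p)/g^\infty(V_p,V_p)$ is homogeneous of degree zero in $V_p$, I may take the inner supremum over Euclidean unit vectors $V_p$, $|V_p|^2 = 1$. Next, from the uniform convergence $g^k_{(ij)} \to g^\infty_{(ij)}$ on $\dot{B_e}(R)$ together with continuity of the matrix entries, I get a uniform upper bound $\|g^k_{(ij)}\|_\infty + \|g^\infty_{(ij)}\|_\infty \le M$ on $\dot{B_e}(R)$ for all $k$. In particular the difference satisfies
\[
\sup_{p\in\dot{B_e}(R)}\sup_{|V_p|=1} \bigl|(g^k - g^\infty)(V_p,V_p)\bigr| \longrightarrow 0 \quad \text{as } k \to \infty,
\]
which follows immediately from the Cauchy–Schwarz–type estimate $|(g^k-g^\infty)(V,V)| \le \sum_{i,j}|g^k_{(ij)}-g^\infty_{(ij)}||V_i||V_j|$ and the hypothesis (1).

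Second, from assumption (2), $g^k(V_p,V_p) \ge \lambda_{\min}$ whenever $|V_p|^2 = 1$, uniformly in $k$ and $p$. Passing to the limit gives the same lower bound $g^\infty(V_p,V_p)\ge \lambda_{\min}$ for unit $V_p$. Writing
\[
\frac{g^k(V_p,V_p)}{g^\infty(V_p,V_p)} \;=\; 1 + \frac{(g^k-g^\infty)(V_p,V_p)}{g^\infty(V_p,V_p)},
\]
the numerator is $o(1)$ uniformly in $p$ and $V_p$ by the first step, while the denominator is bounded below by $\lambda_{\min} > 0$. Hence the entire ratio converges to $1$ uniformly in $(p,V_p)$.

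Finally, applying the continuity of $\log$ at $1$: given $\varepsilon > 0$, pick $\delta$ with $|\log(1+x)|<\varepsilon$ for $|x|<\delta$, and then choose $k_0$ large enough that $|(g^k-g^\infty)(V_p,V_p)|/\lambda_{\min} < \delta$ for all $k\ge k_0$ and all unit $V_p$ at every $p\in\dot{B_e}(R)$. Taking sup over $V_p$ and $p$ gives $d_{TL}[g^k,g^\infty] < \varepsilon$ for $k\ge k_0$, which is the desired Tensor Lipshitz convergence. The only subtle point is ensuring the lower bound on the denominator is preserved under the supremum, which is exactly what the uniform positivity assumption (2) provides; without it, $g^\infty$ could in principle degenerate at some point of $\dot{B_e}(R)$ and the log could blow up, so that hypothesis is really doing the essential work here.
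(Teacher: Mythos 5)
Your proof is correct and follows essentially the same route as the paper's: estimate $|(g^k-g^\infty)(V,V)|$ uniformly over unit vectors via the entrywise uniform convergence, use the uniform positivity $\lambda_{\min}$ to bound the denominator, and conclude the log-ratio is small. One small remark: the intermediate claim that uniform convergence plus continuity yields a uniform upper bound $M$ on the entries over the open ball $\dot{B_e}(R)$ is not actually justified (nothing prevents $g^\infty_{(ij)}$ from being unbounded near the boundary), but since you never use that bound --- the Cauchy--Schwarz estimate $|(g^k-g^\infty)(V,V)|\leq\sum_{i,j}|g^k_{(ij)}-g^\infty_{(ij)}|\,|V_i|\,|V_j|$ needs only the sup-norm of the difference --- the argument stands as written.
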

\begin{proof}: For any vector field $v$ defined on $\dot{B_e}(R)$, denote
$|v|^2$ to the Euclidean square norm. \\
$\forall \ \epsilon$, in the coordinates we have:
$$|g^{\infty}_{(mn)}-g^{j}_{(mn)}|\leq \epsilon$$
Then: \begin{eqnarray} & & \frac{g^{\infty}(v,v)}{g^{j}(v,v)}
\\&=& \frac{g^{\infty}_{(mn)}v^mv^n}{g_{j(mn)}v^mv^n}
\\&\leq&  \frac{g^{j}_{(mn)}v^mv^n+\epsilon|v|^2}{g^{j}_{(mn)}v^mv^n},\ \
\ |v|^2=\sum^{n}_{i=0}v^2_i
\\&\leq& 1+\frac{\epsilon}{\lambda_{min}}
\end{eqnarray}
By the same reason we have:
$$\frac{g^{\infty}(v,v)}{g^{j}(v,v)}\geq 1-\frac{\epsilon}{\lambda_{min}}$$
\end{proof}
\begin{prop}
\label{my iso bound}$\forall \ \epsilon,\ \exists \ \delta$ If
$(M,g_1),(M,g_2)$ satisfy
$$d_{TL}\{(M,g_1),(M,g_2)\}< \delta$$
Then the isoperimetric constant of $(M,g_1)$ and $(M,g_2)$ satisfy:
   $$|\ log\{\frac{C_s(M,g_1)}{C_s(M,g_2)}\}\ |< \epsilon$$\end{prop}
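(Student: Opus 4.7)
The plan is to convert the $d_{TL}$ bound into a pointwise two-sided bilinear inequality, then track how this propagates through the volume and boundary-area functionals appearing in the definition of $C_s$. Specifically, if $d_{TL}(g_1,g_2) < \delta$, then for every $p \in \dot M$ and every $V_p \in T_pM$ one has
\[
e^{-\delta}\, g_2(V_p,V_p) \;\leq\; g_1(V_p,V_p) \;\leq\; e^{\delta}\, g_2(V_p,V_p),
\]
i.e.\ $g_1$ and $g_2$ are comparable as symmetric bilinear forms with conformal factors in $[e^{-\delta},e^{\delta}]$.

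Next, I would compare the two volume forms. Fix $p$ and pick a basis of $T_pM$ that is $g_2$-orthonormal; then the matrix of $g_1$ in this basis is symmetric positive definite with all Rayleigh quotients in $[e^{-\delta},e^{\delta}]$, so all its eigenvalues, hence its determinant, lie in $[e^{-n\delta},e^{n\delta}]$. Consequently $dV_{g_1}/dV_{g_2} \in [e^{-n\delta/2}, e^{n\delta/2}]$ at every point, and integration gives
\[
e^{-n\delta/2}\,\mathrm{Vol}_{g_2}(\Omega) \;\leq\; \mathrm{Vol}_{g_1}(\Omega) \;\leq\; e^{n\delta/2}\,\mathrm{Vol}_{g_2}(\Omega)
\]
for every Borel $\Omega \subset M$. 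The same argument applied on any $(n-1)$-dimensional subspace $T_p\Sigma \subset T_pM$ (using a $g_2$-orthonormal basis of the subspace) shows that the induced Gram determinant ratio satisfies the corresponding bound with $n$ replaced by $n-1$, so for any smooth hypersurface $\Sigma$,
\[
e^{-(n-1)\delta/2}\,\mathrm{Vol}_{g_2}(\Sigma) \;\leq\; \mathrm{Vol}_{g_1}(\Sigma) \;\leq\; e^{(n-1)\delta/2}\,\mathrm{Vol}_{g_2}(\Sigma).
\]
This is the step where Lemma \ref{TL distance} is morally invoked: the whole point is that the bilinear inequality descends to every subspace.

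Combining these two comparisons, for any $\Omega \Subset M$ with smooth boundary the isoperimetric quotient $\mathrm{Vol}(\partial\Omega)/\mathrm{Vol}(\Omega)^{(n-1)/n}$ computed in the two metrics differs by a factor in $[e^{-C(n)\delta},\,e^{C(n)\delta}]$ with $C(n) = (n-1)/2 + (n-1)/2 = n-1$. Since the Sobolev/isoperimetric constant $C_s$ is an infimum (or, dually, supremum) of this quotient over admissible $\Omega$ and since the set of admissible $\Omega$ is the same for the two metrics, taking the infimum preserves the multiplicative control, yielding
\[
\bigl|\log\{C_s(M,g_1)/C_s(M,g_2)\}\bigr| \;\leq\; (n-1)\,\delta.
\]
Choosing $\delta := \epsilon/(n-1)$ gives the claim.

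The argument is essentially bookkeeping with determinants, so there is no serious obstacle; the only point that requires a moment of thought is verifying that the bilinear $g_1 \leq e^{\delta} g_2$ inequality restricts to codimension-one subspaces with the same constant (which is immediate, but is what makes the boundary-area comparison work). If one prefers a convention where $C_s$ is normalised only on domains compactly supported in a ball or satisfying a fixed volume bound, the same proof applies verbatim since the class of admissible domains is a diffeomorphism invariant and hence metric-independent.
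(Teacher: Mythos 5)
Your proposal is correct and takes essentially the same route as the paper: a pointwise comparison of volume (and boundary-area) densities via determinants of Gram matrices in a $g_2$-orthonormal frame, combined to bound the ratio of isoperimetric quotients by $e^{(n-1)\delta}$ and then choosing $\delta=\epsilon/(n-1)$. The only difference is cosmetic — the paper first decomposes each submanifold $S$ into small coordinate patches before doing the same pointwise determinant comparison, whereas you observe directly that the density ratio is controlled pointwise and integrate, which is cleaner and avoids that scaffolding.
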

   \begin{proof}We first prove that:  $\forall \ \epsilon$, $\exists \ \delta$ such that if $(M,g_1),(M,g_2)$ satisfy
$$d_{TL}\{\ (M,g_1),(M,g_2)\ \} < \delta$$
   Then  for  any compact
   submanifold $S$ of M we have:
   $$|\ log\{\frac{Vol_{g_1}{S}}{Vol_{g_2}{S}}\}\ |<\epsilon$$
   In particular, $\epsilon$ does not depend on the specific $S$ we
   choose. Next we start the proof.
   Given such a $S$, we could cut $S$ into $k$ pieces:
   $S=\bigcup^k_{i=1}S_i$ such that:
   \begin{itemize}
   \item $dim S_i\bigcap S_j\ \leq dimS -1$
   \item $\forall \ i, \ \exists \ r_i,\ x_i$ such that $B(x_i,r_i)\supset
   S_i$ and $r_i$ strictly smaller than:
   \begin{center} $ min
   \{inj_{g_1|_{M}}(x_i),inj_{g_2|_{M}}(x_i), inj_{g_1|_{S_i}}(x_i), inj_{g_2|_{S_i}}(x_i), dist_{g_1}(x_i,\partial M), dist_{g_2}(x_i,\partial M)
    \}$\end{center}
   \end{itemize}
   Then we have:
   \begin{clm}$\forall \ i$,  if  \ $d_{TL}(g_1,g_2)< \delta$, then:
   $$|log\frac{Vol_{g_1}(S_i)}{Vol_{g_2}(S_i)}|<\frac{m\delta}{2}$$
   in which $m=dim(S_i)$
   \end{clm}
   Proof of the claim: In $B(x_i,r_i),\ \exists$ coordinates
   $(y_1,....y_n)$ such that:
   $$S_i=\{(y_1,....y_n)|y_{m+1}=..=y_n=0\}$$
Thus we use $(y_1,....y_m)$ as coordinates for $S_i$.\\
$\forall\ p\in S_i$, choose $\{u_1,..., u_m\}$ to be the normal
coordinate of $\{g_2, S_i\}$ at $p$. Then
$g_2(\frac{\partial}{\partial u_i},\ \frac{\partial}{\partial u_j})$
is the identity matrix at p. Thus by Lemma \ref{TL distance} and the
assumption:
$$\forall \ V_p\in T_pS_i, \ |log\frac{g_1(V_p,V_p)}{g_2(V_p,V_p)}|\leq \delta$$
We know that:
$$e^{-m\delta}<detg_1{(\frac{\partial}{\partial u_i},\ \frac{\partial}{\partial u_j})}<e^{m\delta}$$
Thus:
$$e^{-\frac{m\delta}{2}}<\frac{\sqrt{detg_1{(\frac{\partial}{\partial u_i},\ \frac{\partial}{\partial u_j})}}}{\sqrt{detg_2{(\frac{\partial}{\partial u_i},\ \frac{\partial}{\partial u_j})}}}<e^{\frac{m\delta}{2}}$$
Then for any other coordinate system, in particular $(y_1,...y_m)$,
we have:
$$e^{-\frac{m\delta}{2}}<\frac{\sqrt{detg_1{(\frac{\partial}{\partial y_i},\ \frac{\partial}{\partial y_j})}}}{\sqrt{detg_2{(\frac{\partial}{\partial y_i},\ \frac{\partial}{\partial y_j})}}}<e^{\frac{m\delta}{2}}$$
then :
$$e^{-\frac{m\delta}{2}}<\frac{\int_{Dom{S_i}}\sqrt{detg_1{(\frac{\partial}{\partial y_i},\ \frac{\partial}{\partial y_j})}}}{\int_{Dom{S_i}}\sqrt{detg_2{(\frac{\partial}{\partial y_i},\ \frac{\partial}{\partial y_j})}}}<e^{\frac{m\delta}{2}}$$which tells us
$$e^{-\frac{m\delta}{2}}<\frac{Vol_{g_1}(S_i)}{Vol_{g_2}(S_i)}<e^{\frac{m\delta}{2}}$$
So we've finished the proof of the claim. \\
Then we see: $$ e^{-\frac{m\delta}{2}}<\frac{\sum_i
Vol_{g_1}(S_i)}{\sum_i Vol_{g_2}(S_i)}<e^{\frac{m\delta}{2}}$$ Which
means:
$$e^{-\frac{m\delta}{2}}<\frac{Vol_{g_1}(S)}{Vol_{g_2}(S)}<e^{\frac{m\delta}{2}}$$
Then $\forall \ \Omega \Subset M$, using  the above proof we see:
$$e^{-(n-1)\delta} <\frac{\frac{Vol_{g_1}(\partial\Omega)}{Vol_{g_1}(\Omega)^{\frac{n-1}{n}}}}{\frac{Vol_{g_2}(\partial\Omega)}{Vol_{g_2}(\Omega)^{\frac{n-1}{n}}}}<e^{(n-1)\delta}$$
Take $\delta=\frac{\epsilon}{n-1}$ which is independent of $\Omega$
then we are done.
   \end{proof}
   Next we state two lemmas without proof.
   \begin{lem}
\label{L1} For all  n and $A ,\ \exists\ c$ such that:\\
If $\{B_p(1), g\}$ is a n-dim Riemannian Unit Ball which satisfy:
$A\cdot Vol{\Omega}^{\frac{n-1}{n}}\leq Vol(\partial\Omega),\
\forall\ \Omega\Subset B(x,1)$.

Then: $$\{VolB_p(\frac{3}{4})\geq c \}$$
\end{lem}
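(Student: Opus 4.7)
My plan is to apply the relative isoperimetric inequality to the family of concentric metric balls $B_p(r)$, $0<r\le 3/4$, and reduce the statement to an elementary ODE comparison. Set $V(r):=\mathrm{Vol}(B_p(r))$. For $r\le 3/4$ the closed ball $\overline{B_p(r)}$ is compactly contained in $B_p(1)$, and for almost every such $r$ the sphere $\partial B_p(r)$ is a smooth hypersurface: Sard's theorem applied to the distance function $d_p:=d(p,\cdot)$ (which is smooth off the cut locus and a.e.\ regular elsewhere) furnishes a full-measure set of regular values. At such a regular value, the hypothesis applied to $\Omega=B_p(r)$ gives
$$\mathrm{Vol}(\partial B_p(r))\ \ge\ A\,V(r)^{(n-1)/n}.$$

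Next I use the coarea formula to identify $V'(r)$ with $\mathrm{Vol}(\partial B_p(r))$. Since $d_p$ is $1$-Lipschitz with $|\nabla d_p|=1$ a.e., one has
$$V(r)\ =\ \int_0^r \mathcal{H}^{n-1}\bigl(\{d_p=s\}\bigr)\,ds,$$
so $V$ is absolutely continuous and $V'(r)=\mathrm{Vol}(\partial B_p(r))$ for a.e.\ $r\in(0,3/4]$. Combining the two steps yields the scalar differential inequality
$$V'(r)\ \ge\ A\,V(r)^{(n-1)/n}\quad\text{for a.e.\ } r\in(0,3/4].$$

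Rewriting this as $\frac{d}{dr}\bigl[n\,V(r)^{1/n}\bigr]\ge A$, integrating from $0$ to $3/4$, and using $\lim_{r\to 0^+}V(r)^{1/n}=0$, I obtain
$$V(3/4)^{1/n}\ \ge\ \frac{3A}{4n},$$
so the conclusion holds with $c=\bigl(\tfrac{3A}{4n}\bigr)^n$. The only delicate point in the argument is the use of metric balls as admissible test domains in the isoperimetric hypothesis; this is handled by restricting to regular radii of $d_p$ and noting that an a.e.\ differential inequality is enough for the integration step. Since the isoperimetric inequality is already assumed, I do not expect any serious obstacle.
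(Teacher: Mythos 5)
The paper states Lemma~\ref{L1} without proof, so there is no internal argument to compare against. Your ODE-comparison approach---apply the isoperimetric hypothesis to concentric balls, use the coarea formula to identify the boundary area with $V'(r)$, and integrate $\frac{d}{dr}\bigl[nV^{1/n}\bigr]\geq A$---is the standard and correct route, and the arithmetic giving $c=(3A/4n)^n$ checks out.

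One technical point needs repair, though it is closable rather than fatal. Your invocation of Sard's theorem is misdirected: $|\nabla d_p|=1$ wherever $d_p$ is smooth, so there simply are no critical values to exclude. The true obstruction to $\partial B_p(r)$ being a smooth hypersurface (hence to $B_p(r)$ being an admissible test domain $\Omega\Subset B_p(1)$ under the paper's convention in Section~3, which requires smooth boundary) is the cut locus of $p$, on which $d_p$ is not even $C^1$, and Sard's theorem says nothing about the level sets meeting that singular set. You can repair this in either of two standard ways: smooth $d_p$ to a nearby function $\rho_\delta$ with $|\nabla\rho_\delta|\le 1$, use its regular sublevel sets as test domains, and let $\delta\to 0$ with volumes and perimeters controlled by coarea; or observe that the isoperimetric inequality for smooth domains extends to sets of finite perimeter by mollifying indicator functions, after which $A\,V(r)^{(n-1)/n}\le \mathcal{P}\bigl(B_p(r)\bigr)\le \mathcal{H}^{n-1}\bigl(\{d_p=r\}\bigr)=V'(r)$ for a.e.\ $r$ and your integration step proceeds unchanged. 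Replace the Sard reference with one of these arguments and the proof is complete.
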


\begin{lem}
\label{L2}
For all  n, $ A,\ \Lambda>0,\ \exists\ i_0$ such that:\\
If $\{B_p(1), g\}$ is a n-dim Unit Ball in a complete Riemannian
Manifold which satisfy:
\begin{enumerate}
\item $|Rm|\leq \Lambda$
\item $A\cdot Vol(\Omega)^{\frac{n-1}{n}}\leq  Vol(\partial\Omega),\  \forall \ \Omega\Subset B(x,1)$
\end{enumerate}
Then: $$inj_g(p)\geq i_0$$
\end{lem}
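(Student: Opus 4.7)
The plan is to reduce to Cheeger's classical injectivity-radius estimate. First I would apply Lemma \ref{L1} to turn the isoperimetric hypothesis into a volume lower bound $Vol\,B_p(3/4)\geq c=c(n,A)>0$. Once this non-collapsing bound is in place at the basepoint, combining it with the two-sided curvature bound $|Rm|\leq\Lambda$ brings us into the setting of Cheeger's lemma, which supplies $i_0=i_0(n,\Lambda,c(n,A))>0$ with $inj(p)\geq i_0$, as required.

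To produce $i_0$ from scratch rather than cite it, I would argue by contradiction and rescaling. Assume a sequence $(M_k,g_k,p_k)$ satisfying the hypotheses with $\ell_k:=inj_{g_k}(p_k)\to 0$, and set $\tilde g_k:=\ell_k^{-2}g_k$. The curvature scales down to $|\widetilde{Rm}|\leq \ell_k^2\Lambda\to 0$, the injectivity radius at the basepoint is renormalised to $1$, and since the inequality $A\cdot Vol(\Omega)^{(n-1)/n}\leq Vol(\partial\Omega)$ is scale invariant it persists with the \emph{same} constant $A$ on $B_{\tilde g_k}(p_k,\ell_k^{-1})$, which exhausts the limiting manifold. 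The rescaled unit ball has volume close to $\omega_n$ because it lies below the injectivity radius with curvature tending to zero, so the exponential map there is nearly an isometry to the Euclidean unit ball. By Cheeger-Gromov compactness a subsequence converges in the pointed $C^{1,\alpha}$ topology to a complete flat $(M_\infty,g_\infty,p_\infty)$ with $inj_{g_\infty}(p_\infty)=1$; hence $M_\infty=\mathbb{R}^n/\Gamma$ with $\Gamma$ nontrivial. The contradiction now comes from testing the limiting isoperimetric inequality on a sequence of large relatively compact domains in a direction in which $M_\infty$ is ``small'' (long slabs transverse to a translation axis if $\Gamma$ contains translations, or the entire space if $M_\infty$ is compact): in any nontrivial flat quotient, a scaling calculation shows $Vol(\partial\Omega)/Vol(\Omega)^{(n-1)/n}\to 0$ along such a sequence, which is incompatible with the inherited constant $A$.

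The main technical obstacle will be ensuring sufficient regularity of the limit, so that volume and boundary area functionals are continuous under the convergence. This requires propagating the non-collapsing bound from the basepoint to a neighbourhood, in order to get a uniform lower bound on the injectivity radius on compact subsets of the rescaled manifolds, not merely at $p_k$. I would handle this by re-applying Lemma \ref{L1} locally: for any $q$ close to $p_k$ the ball $B_{g_k}(q,r)$ sits inside $B_{g_k}(p_k,1)$ for small $r$, the isoperimetric hypothesis restricts to $B(q,r)$, and Lemma \ref{L1} yields a volume lower bound at $q$; Cheeger's estimate then supplies the uniform injectivity radius bound on compact sets, upgrading Gromov-Hausdorff to $C^{1,\alpha}$ convergence and allowing the isoperimetric inequality to pass cleanly to $M_\infty$.
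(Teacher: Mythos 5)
The paper explicitly states Lemma \ref{L2} (and Lemma \ref{L1}) without proof, so there is no internal argument to compare against. Your first paragraph is exactly the standard route the paper evidently has in mind: Lemma \ref{L1} converts the isoperimetric hypothesis into the non-collapsing bound $\mathrm{Vol}\,B_p(3/4)\geq c(n,A)$, and Cheeger's injectivity-radius lemma (two-sided curvature bound plus volume lower bound at a fixed scale) then gives $\mathrm{inj}_g(p)\geq i_0(n,\Lambda,c(n,A))$. That is complete and correct, and suffices on its own. Your blow-up alternative is also sound --- the scale invariance of the isoperimetric inequality, the identification of the flat limit as a nontrivial quotient via $\mathrm{inj}(p_\infty)=1$, and the degeneration of the isoperimetric ratio on large domains in such a quotient are all correct --- and you rightly flag that the technical point is propagating non-collapsing off the basepoint to get $C^{1,\alpha}$ convergence; re-applying Lemma \ref{L1} on small balls about nearby points handles this. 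It is, however, considerably heavier machinery than needed given that the Cheeger-lemma argument already closes the proof.
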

Now we are ready to go on.
   \begin{prop}
   \label{compactness under bounds on curvature and iso}
For all  $n,\ A\ $ and $\epsilon, \ \exists\ \omega,\ \delta$ such that:\\
If $\{B_p(1), g\}$ is a n-dim Unit Ball in a complete Riemannian
Manifold which satisfy:
\begin{enumerate}
\item $|Rm|\leq \delta$
\item $A\cdot Vol(\Omega)^{\frac{n-1}{n}}\leq  Vol(\partial\Omega),\  \forall \ \Omega\Subset B(x,1)$
\end{enumerate}
Then $B_p(\omega)$ is within distance $\epsilon$ to a Euclidean
Domain w.r.t Tensor Lipshitz Distance.
\end{prop}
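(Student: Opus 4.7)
The natural approach is to combine the injectivity radius lower bound from Lemma \ref{L2} with a Jacobi field comparison estimate in normal coordinates, and then to translate $C^0$-closeness of the metric components into closeness in Tensor Lipschitz distance via Lemma \ref{uniform convergence inply lipshitz convergence}.

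The first step is to restrict attention to $\delta\leq 1$ so that Lemma \ref{L2} (applied with $\Lambda=1$) yields a positive lower bound $i_0=i_0(n,A)>0$ on $\mathrm{inj}_g(p)$, independent of the particular ball. In particular $\exp_p:B_e(0,i_0)\to B_p(i_0)$ is a diffeomorphism, so we have normal coordinates on $B_p(i_0)$ in which every radial segment is a unit-speed geodesic.

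Next, I would invoke the standard Jacobi field estimate: along any radial geodesic $\gamma$, a Jacobi field $J$ with $J(0)=0$ satisfies $\ddot J+R(\dot\gamma,J)\dot\gamma=0$, and the hypothesis $|Rm|\leq \delta$ converts this into a quadratic deviation of $|J|^2$ from its Euclidean model. Reading the estimate in matrix form yields
\[
|g_{ij}(x)-\delta_{ij}|\leq C(n)\,\delta\,|x|^2 \qquad \text{for } |x|\leq i_0/2,
\]
where the components are taken in normal coordinates. Choosing $\omega\leq i_0/2$ and $\delta$ so small that $C(n)\delta\omega^2$ is less than a threshold fixed in advance, the pulled-back metric on $B_e(0,\omega)$ becomes uniformly $C^0$-close to the flat metric $\delta_{ij}$ and stays uniformly bounded below, say by $\tfrac{1}{2}I$.

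Lemma \ref{uniform convergence inply lipshitz convergence} then converts $C^0$-closeness combined with a uniform lower ellipticity bound into Tensor Lipschitz closeness, producing $d_{TL}[(B_p(\omega),g),(B_e(0,\omega),\delta_{ij})]<\epsilon$. The main obstacle is making the Jacobi field estimate fully quantitative, with the constant depending only on $n$; this is a classical but slightly delicate computation that must be carried out carefully near the base point where the coordinate frame degenerates. A less quantitative alternative would run a compactness argument: any counterexample sequence would have $|Rm_{g_k}|\to 0$ and $\mathrm{inj}\geq i_0$, hence by Cheeger--Gromov admit a $C^{1,\alpha}$-convergent subsequence whose limit is flat, and Lemma \ref{uniform convergence inply lipshitz convergence} would then force Tensor Lipschitz convergence, contradicting the assumed failure.
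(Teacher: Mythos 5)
Your main direct argument is correct but takes a genuinely different route from the paper. The paper argues by contradiction: it extracts a sequence of counterexample balls with $\delta_j\to 0$ and $\omega_j\to 0$, invokes Lemma \ref{L2} for a uniform injectivity radius lower bound, and then uses the Green--Wu harmonic coordinate theory (\cite{green wu}) to produce $C^{1,\alpha}$-controlled charts $\phi_j$ on a fixed Euclidean ball $B_e(\sigma_1)$ with the explicit estimate $|\phi_j^*g^j_{kl}-\delta_{kl}|\leq \tfrac{\mathrm{const}\,\delta_j^2}{1-\mathrm{const}\,\delta_j^2}$, from which Lemma \ref{uniform convergence inply lipshitz convergence} gives Tensor Lipschitz convergence and a contradiction. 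You instead work constructively in geodesic normal coordinates (well-defined on a ball of radius $i_0$ by Lemma \ref{L2}) and use the classical Rauch/Jacobi comparison, combined with the Gauss lemma, to get $|g_{ij}(x)-\delta_{ij}|\leq C(n)\delta|x|^2$; this bypasses harmonic coordinates and the contradiction structure entirely and has the advantage of giving $\omega$ and $\delta$ explicitly in terms of $n$, $A$, $\epsilon$. Both routes are valid; the caveat you flag about the matrix form of the Jacobi estimate is fair but the needed inequality (sinh/sin pinching of the tangential part of $g$ in polar form) is standard. Your ``less quantitative alternative'' via Cheeger--Gromov compactness is, in spirit, exactly the paper's argument, except the paper makes the compactness concrete by appealing to specific Green--Wu harmonic-ball estimates rather than an abstract convergence theorem.
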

\begin{proof}If not, by Lemma \ref{L2}, $\exists$   $\epsilon_0 >0, \ \rho$ and a sequence of $\{B_{p_j}(1), g^j\},\delta_j\rightarrow 0,\omega_j\rightarrow 0$ such that:
\begin{enumerate}
\item $|Rm|\leq \delta_j$
\item $inj_{g^j}(p_j)\geq \rho$
\item $B_{p_{j}}(\omega_j)$ stay further than $\epsilon_0 $ to any
Euclidean Domain w.r.t Tensor Lipshitz Distance.
\end{enumerate}
Using harmonic coordinate theory in page 124 of  \cite{green wu},
the main facts about harmonic balls in page 130 of \cite{green wu},
and the first inequality of the proof of the lemma in page 128 of
\cite{green wu}, we have: $\exists$ constants
$\sigma_1,\sigma_2,\sigma_3$ depending on $n,\rho $ such that:
\begin{enumerate}
\item $\forall \ j, \ \exists \ \phi_j:B_e(\sigma_1)\rightarrow B_{p_j}(1)$
in which $\phi_j$ are harmonic coordinate system and $B_e(\sigma_1)$
is the Euclidean Ball with radius $\sigma_1$. Moreover:
       $$||g^j_{kl}||_{C^{1,\alpha}}\leq const({\alpha,n,\rho})$$
      in which $g^j_{kl}=g^j(\frac{\partial}{\partial h^j_k},\frac{\partial}{\partial h^j_k})$ and  $h^j_1,,,h^j_n$ are  coordinate functions  of $\phi_j$
     . The $C^{1,\alpha}$ norm are also taken w.r.t the
     coordinate.
\item $ B_{p_j}(\sigma_3)\subset \phi_j[B_e(\sigma_1)]\subset
B_{p_j}(\sigma_2)$,
\item $|\phi^{*}_jg^j_{kl}-\delta_{kl}|\leq
\frac{const(n,\rho)\delta^2_j}{1-const(n,\rho)\delta^2_j} $.
\end{enumerate}
Thus $\phi^{*}_jg^j_{kl}$ tends to the Euclidean Metric on
$B_e(\sigma_1)$ uniformly, then from  lemma \ref{uniform convergence
inply lipshitz convergence} we also know that: $\phi^{*}_jg^j_{kl}$
tends to the Euclidean Metric on $B_e(\sigma_1)$ w.r.t Tensor
Lipshitz distance.

Thus it is easy to see that when $j$ is large enough
$B_{p_j}(\sigma_3)$ will be within $\frac{\epsilon_0}{2}$ to a
Euclidean Domain in $B_e(\sigma_1)$ w.r.t Tensor Lipshitz distance.
Contradiction!.
\end{proof}
Then we have:
   \begin{prop}
   \label{strong compactness}
For all  $n, \  A,\ \epsilon, \Lambda, \ \exists \ \hat{\omega} $ such that:\\
If $\{B_p(1), g\}$ is a n-dim Unit Ball in a complete Riemannian
Manifold which satisfy:
\begin{enumerate}
\item $|Rm|\leq \Lambda \ \textrm{in} \ B(x,1)$.
\item $A\cdot Vol(\Omega)^{\frac{n-1}{n}}\leq  Vol(\partial\Omega),\  \forall \ \Omega\Subset
B(x,1)$.
\end{enumerate}
Then $B_p({\hat{\omega}})$ is within distance $\epsilon$ to a
Euclidean Domain w.r.t Tensor Lipshitz Distance.
\end{prop}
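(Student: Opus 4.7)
The plan is to reduce Proposition \ref{strong compactness} to Proposition \ref{compactness under bounds on curvature and iso} by a rescaling argument. Proposition \ref{compactness under bounds on curvature and iso} produces a conclusion only when $|Rm|$ is smaller than some threshold $\delta = \delta(n, A, \epsilon)$, whereas here we have only a fixed (possibly large) bound $\Lambda$; however, under a homothetic rescaling $\tilde g = \lambda^2 g$, curvature decays like $\lambda^{-2}$, while the two other quantities involved, namely the isoperimetric constant and the Tensor Lipschitz distance, are scale invariant. This suggests the correct value of $\lambda$ and then dictates how large $\hat{\omega}$ can be taken.

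More precisely, I would proceed as follows. Let $\delta = \delta(n, A, \epsilon)$ and $\omega = \omega(n, A, \epsilon)$ be the constants furnished by Proposition \ref{compactness under bounds on curvature and iso}. If $\Lambda \leq \delta$ we are immediately done with $\hat\omega = \omega$, so assume $\Lambda > \delta$ and set $\lambda = \sqrt{\Lambda/\delta} > 1$, $\tilde g = \lambda^{2} g$. In the rescaled metric the $g$-ball $B_p^{g}(1/\lambda) \subset B_p^{g}(1)$ becomes the unit ball $B_p^{\tilde g}(1)$, with curvature bound $|\widetilde{Rm}|_{\tilde g} = \lambda^{-2}|Rm|_g \leq \lambda^{-2}\Lambda = \delta$. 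Checking that the isoperimetric inequality $A \cdot \mathrm{Vol}(\Omega)^{(n-1)/n} \leq \mathrm{Vol}(\partial \Omega)$ is preserved is a one line verification from $\mathrm{Vol}_{\tilde g}(\Omega) = \lambda^{n} \mathrm{Vol}_{g}(\Omega)$ and $\mathrm{Vol}_{\tilde g}(\partial \Omega) = \lambda^{n-1} \mathrm{Vol}_{g}(\partial \Omega)$.

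Having verified both hypotheses for $(B_p^{\tilde g}(1), \tilde g)$, I apply Proposition \ref{compactness under bounds on curvature and iso} to conclude that $B_p^{\tilde g}(\omega)$ is within $\epsilon$ of some Euclidean domain $(U, g_{e})$ in the Tensor Lipschitz sense. Now I translate this back to $g$. Since $d_{TL}$ is defined by $\sup_{V_p} |\log (g_1(V_p,V_p)/g_2(V_p,V_p))|$, multiplying both metrics by the same constant $\lambda^{2}$ leaves the ratio, and hence the $d_{TL}$-distance, unchanged. In particular $B_p^{g}(\omega/\lambda) = B_p^{\tilde g}(\omega)$ is within $\epsilon$ of $(U, \lambda^{-2} g_{e})$, and $\lambda^{-2} g_{e}$ is still a Euclidean metric. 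Therefore setting $\hat \omega = \omega/\lambda = \omega \sqrt{\delta/\Lambda}$ finishes the proof.

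There is no real obstacle beyond bookkeeping the two scaling invariances, which is why Proposition \ref{compactness under bounds on curvature and iso} was proved separately first under the stronger small-curvature hypothesis that enabled the harmonic-coordinate compactness argument there; Proposition \ref{strong compactness} is then essentially a corollary by a one-parameter homothety.
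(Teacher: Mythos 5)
Your proposal is correct and takes essentially the same approach as the paper: reduce to Proposition \ref{compactness under bounds on curvature and iso} by a homothety $g \mapsto \lambda^2 g$, using that curvature scales like $\lambda^{-2}$ while the isoperimetric constant and Tensor Lipschitz distance are scale invariant. The paper phrases it as a contradiction argument with a sequence $k \to \infty$ (and with a typo in the scaling exponent) whereas you carry out the rescaling directly with an explicit $\lambda = \sqrt{\Lambda/\delta}$, which is the same idea written more cleanly.
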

\begin{proof}This follows trivially from  Proposition \ref{compactness under bounds on curvature and
iso} and rescaling argument: If not, then $\exists \ \epsilon_0>0$
and $k\rightarrow \infty$ such that  $B_p(\frac{1}{k})$ stay further
than $\epsilon_0$ to any Euclidean domain, then we rescale
$[B_p(\frac{1}{k}), \ g]$ to be $[B_p(1), k^{\frac{1}{2}}g]$ which
will satisfy the assumption of Proposition \ref{compactness under
bounds on curvature and iso} if $k$ is large enough, which is a
contradiction.
\end{proof}
\begin{prop}
\label{noncollapsed} For all $n\geq 3,\ p>\frac{n}{2},
\ A_0,\ K$ and $\epsilon$, there exists  $\upsilon$  such that:\\
Let $[g(t),\ B(x,1)]$  be a  n-dim complete Riemannian unit ball
which satisfy :
\begin{enumerate}
\item $|f|_{\frac{2n}{n-2}}\leq A_0|\nabla f|_{2},\ \forall f\in C_0^{\infty}{[B(x,1)]}$.
\item $|Rm|_{\frac{n}{2}[B(x,1)]}\leq \frac{\beta }{A_0^2},\beta  \leq \ \frac{1}{n^2}$
\item $|Ric|_{p[B(x,1)]}\leq K,\ p>\frac{n}{2}$
\end{enumerate}
Then  $B(x,\upsilon)$ is within distance $\epsilon$ to a Euclidean
Domain w.r.t Tensor Lipshitz Distance.
\end{prop}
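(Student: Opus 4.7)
The plan is to evolve $g$ by Local Ricci Flow for a short time in order to promote the $L^{n/2}$ bound on $Rm$ (Assumption 2) to a pointwise bound, then apply Proposition \ref{strong compactness} to the smoothed metric and transfer the conclusion back via Tensor Lipschitz distance; write $g$ for the initial metric and $g_s$ for the LRF solution at flow-time $s$. First observe that Assumption 1 is the Sobolev inequality with constant $A_0$, which up to a dimensional factor is the isoperimetric inequality $A\cdot Vol(\Omega)^{(n-1)/n}\leq Vol(\partial\Omega)$ with $A=A(n,A_0)$; this provides the isoperimetric input required by Lemma \ref{L2} and Proposition \ref{strong compactness}. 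Fix a cutoff $\chi\in C^\infty_0(B(x,1))$ with $\chi\equiv 1$ on $B(x,3/4)$ and evolve by $\partial_s g_s=-2\chi^2 Ric_{g_s}$; short-time existence is from \cite{deane}.

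Two estimates then drive the argument. The smoothing estimate: starting from $(\partial_s-\Delta)|Rm|\leq C|Rm|^2$, multiplying by $|Rm|^{q-1}$, integrating against a space cutoff supported in $\{\chi=1\}$, and running the usual Moser iteration against the Sobolev inequality, one obtains for $s\in(0,s_0(n,A_0)]$ a pointwise bound $|Rm|_{g_s}(y)\leq C(n)/s$ on $B(x,3/4)$, provided the initial $L^{n/2}$ norm of $Rm$ is smaller than an explicit multiple of $A_0^{-2}$; the factor $\beta\leq 1/n^2$ in Assumption 2 is precisely what passes this threshold. The drift estimate: in harmonic coordinates for $g$, Assumption 3 combined with elliptic regularity for $\Delta_h g\sim Ric$ gives a $C^{1,\alpha}$ bound on $g$ with $\alpha=2-n/p$, and this bound is preserved along LRF for a definite short time (\cite{deane}); $C^0$-continuity of the flow in these coordinates together with Lemma \ref{uniform convergence inply lipshitz convergence} then yields $d_{TL}(g,g_s)\to 0$ as $s\downarrow 0$. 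Proposition \ref{my iso bound} in turn shows that $g_s$ inherits the isoperimetric constant of $g$ with a controlled loss on $B(x,3/4)$.

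Now choose $s=\upsilon_1^2$ so small that $|Rm|_{g_s}\leq \upsilon_1^{-2}$ on $B(x,3/4)$ and $d_{TL}(g,g_s)<\epsilon/2$. Rescaling $g_s$ by the factor $\upsilon_1^{-2}$ converts $B_g(x,\upsilon_1)$ into a unit ball of bounded curvature and bounded isoperimetric constant, so Proposition \ref{strong compactness} supplies $\hat\omega=\hat\omega(n,A,1,\epsilon/2)$ with the rescaled ball of radius $\hat\omega$ lying within $\epsilon/2$ of a Euclidean domain in TL distance. The triangle inequality $d_{TL}(g,\mathrm{Eucl})\leq d_{TL}(g,g_s)+d_{TL}(g_s,\mathrm{Eucl})<\epsilon$ on $B_g(x,\hat\omega\upsilon_1)$ gives the claim with $\upsilon:=\hat\omega\upsilon_1$. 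The main obstacle is the LRF smoothing estimate itself: carrying out Moser iteration in the presence of the spatial cutoff $\chi$ while preserving the smallness of $|Rm|_{n/2}$ along the flow, and closing the loop with the simultaneous $C^{1,\alpha}$ drift control from $|Ric|_p$, is the delicate analytic heart of the argument, and is precisely what Local Ricci Flow was introduced to supply in \cite{deane} and what is further refined in Theorem \ref{T1} of this paper.
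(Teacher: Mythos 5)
Your proposal follows essentially the same strategy as the paper: run Local Ricci Flow for a short time to promote the $L^{n/2}$ curvature bound to a pointwise bound (Deane Yang's smoothing estimate, Theorem~\ref{T1}), verify that the flowed metric is Tensor Lipschitz close to the initial one, apply Proposition~\ref{strong compactness} to the smoothed metric, and pull the conclusion back by the triangle inequality for $d_{TL}$, with Proposition~\ref{my iso bound} transferring the isoperimetric constant. The overall skeleton matches.

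Where you diverge is in the drift estimate, and there I think your route is both an unnecessary detour and not quite closed. You go through harmonic coordinates for $g$, elliptic regularity for $\Delta_h g\sim Ric$ to get a $C^{1,\alpha}$ bound with $\alpha = 2-n/p$, assert this is preserved along LRF, and then invoke ``$C^0$-continuity of the flow'' plus Lemma~\ref{uniform convergence inply lipshitz convergence} to get $d_{TL}(g,g_s)\to 0$. But $d_{TL}(g,g_s)\to 0$ as $s\downarrow 0$ is trivial for any fixed smooth solution; what the proposition demands is a single flow time $s$, determined only by $n,p,A_0,K,\epsilon$, at which $d_{TL}(g,g_s)<\epsilon/2$ \emph{uniformly} over all metrics satisfying the hypotheses, so that the final radius $\upsilon$ is universal. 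A uniform $C^{1,\alpha}$ bound in space at each time slice does not by itself produce this; you need a quantitative bound on $\partial_s g_s = -2\chi^2 Ric_{g_s}$ in $L^\infty$, integrable in time. That is exactly conclusion (2) of Theorem~\ref{T1}: $|\chi^2 Ric|_\infty \le \bar{C}(n,p)A_0^{n/p}t^{-n/(2p)}[t|\nabla\chi|_\infty^2|_0+1]^{n/p+2/p}K$, which is integrable near $t=0$ precisely because $p>n/2$ gives $n/(2p)<1$. The paper integrates this directly to get $e^{-\epsilon/2}g(v,v)\le g_{t_0}(v,v)\le e^{\epsilon/2}g(v,v)$, hence $d_{TL}(g,g_{t_0})\le\epsilon/2$, with $t_0$ depending only on $n,p,A_0,K,\epsilon$. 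Your harmonic-coordinate step is hiding precisely this missing pointwise Ricci control; if you make it explicit you will simply have re-derived the cited estimate. I would replace the harmonic-coordinate paragraph with the direct integration of Theorem~\ref{T1}(2), and also make explicit (as the paper does) the ball inclusion $B(x,1/16)\subset B_{t_0}(x,1/8)\subset B(x,1/4)$ so that the pointwise curvature bound from Theorem~\ref{T1}(1) actually applies on a $g_{t_0}$-metric ball contained in $\{\chi\equiv1\}$.

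One more small note: your Moser-iteration derivation of the smoothing estimate is just re-proving Theorem~\ref{T1}(1), which is already stated in the appendix; citing it is cleaner and is what the paper does.
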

\begin{proof}{of Proposition \ref{noncollapsed}:}
 Denote $r(p)$ to be the distance function to x. Then choose:
\begin{displaymath}
\hat{\chi}(p)= \left \{
\begin{array}{ccr}
 1, & \textrm{if} \ 0\leq r\leq\frac{1}{3};\\
  2-3r, & \textrm{if} \  \frac{1}{3}\leq r\leq\frac{2}{3};\\
 0, & \textrm{if} \  \frac{2}{3}\leq r\leq 1
\end{array} \right.
\end{displaymath} We see $\hat{\chi}$ is a lipshitz function with lipshitz
constant
$Lip(\hat{\chi})\leq 3$.\\
Then we could perturb $\hat{\chi}$ to be a function $\chi\in
C^{\infty}_{0}[B(x,1)]$ with $|\nabla\chi|_{\infty}\leq 4$ and:
$$\chi\equiv 1\ in\ B(x,\frac{1}{4}).$$
Therefore we run the LRF:
$$\frac{\partial g}{\partial
t}=-2(\chi^2Ric)$$ According to Theorem \ref{T1}  we have:
$$|\chi^2 Ric|_{\infty}\leq
\bar{C}(n,p){A_0}^{\frac{n}{p}}t^{-\frac{n}{2p}}[t+1]^{\frac{n}{p}+\frac{2}{p}}K$$
within a fixed time. Then $\exists \ t_0$ such that
$$2\int^{t_0}_{0}|\chi^2 Ric|_{\infty}\leq \frac{\epsilon}{2}.$$
Then $\forall\ v\in T{\dot{B}(x,1)}$ we have:
$$e^{-\frac{\epsilon}{2}}g(v,v)\leq g_{t_0}(v,v)\leq e^{\frac{\epsilon}{2}}g(v,v)$$
Thus we have:
  $$d_{TL}\{[B(x,1),g_0],[B(x,1),g_{t_0}]\}\leq \frac{\epsilon}{2}$$
  (Here $B(x,1)$ is the unit ball w.r.t the INITIAL METRIC).\\
  By making $\epsilon$  small enough we could have :
$$ B(x,\frac{1}{16})\subset B_{t_0}(x,\frac{1}{8})\subset B(x,\frac{1}{4})$$
Therefore we have:
$$\chi\equiv 1\ on \ B_{t_0}(x,\frac{1}{8})$$
Thus from  Theorem \ref{T1}  we see:
\begin{itemize}
\item $| Rm|_{\infty(B_{t_0}(x,\frac{1}{8}))}\leq
Const(n)\ t_0^{-1}[t_0+1]^{\frac{4}{n}+2}$
\item The
isoperimetric constant in $[B_{t_0}(x,\frac{1}{8}),\ g_{t_0}]$ is
less than $2A_0$ if $\epsilon$ is small enough.
\end{itemize}
Then according to Proposition \ref{noncollapsed},  $\exists \
\hat{\upsilon}$ such that : $B_{t_0}(x,\frac{\hat{\upsilon}}{8})$
within distance $\frac{\epsilon}{2}$ to a Euclidean Domain w.r.t
Tensor Lipshitz Distance.  We denote the underlying Euclidean Domain
to be
$\Omega_E$.\\
Then we could see from $B(x,\frac{\hat{\upsilon}}{16})\subseteq
B_{t_0}(x,\frac{\hat{\upsilon}}{8})$ and
$d_{TL}\{[B(x,1),g_0],[B(x,1),g_{t_0}]\}< \frac{\epsilon}{2}$ (Here
$B(x,1)$ is the unit ball w.r.t the INITIAL METRIC) that $\exists$ a
Euclidean domain $\hat{\Omega}_E\subset \Omega_E$ s.t:
$$d_{TL}\{B(x,\frac{\hat{\upsilon}}{16}), \ \hat{\Omega}_E\}<
\frac{\epsilon}{2}+\frac{\epsilon}{2}=\epsilon$$

\end{proof}
Then we are ready to prove the following:
\begin{prop}{Strong Noncollapsed:}\\
\label{strong noncollapsed}  For all $n\geq 3,\ p> \frac{n}{2},\
\eta,\ K,\ \tau, \ \epsilon$,  there exists  $\sigma$ such that:\\
If $[g,B(x,1)]$ is a unit Ball in an complete n-dim smooth
Riemannian manifold and satisfies $\digamma(n,p,K,\tau,\eta)$
assumption, then :

      $$(1-\epsilon)C_{E}\cdot Vol(\Omega)^{\frac{n-1}{n}}\leq  Vol(\partial\Omega),\forall\ \Omega\Subset
B(x,\sigma)$$ In which $C_{E}$  is the isoperimetric constant of the
Euclidean space.
\end{prop}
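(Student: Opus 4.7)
The plan is a three-step funnel: first, convert the three $\digamma$ hypotheses into the hypotheses of Proposition \ref{noncollapsed} on a rescaled ball; second, apply that proposition to obtain a subball that is $d_{TL}$-close to a Euclidean domain; third, use Proposition \ref{my iso bound} to translate this Tensor Lipshitz proximity into a near-Euclidean isoperimetric constant.

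The main work lies in Step 1. I would restrict to $B(x,\tau)$ and rescale to a unit ball via $\tilde g = \tau^{-2} g$. The $L^{n/2}$ curvature norm is scale-invariant, so condition (2) of $\digamma$ becomes $|\widetilde{Rm}|_{n/2[\tilde B(x,1)]}\leq \varpi(n,\eta)$. Condition (3) of $\digamma$ rescales to $|\widetilde{Ric}|_{p[\tilde B(x,1)]}\leq \tau^{2-n/p}K \leq K$, since $p>n/2$ implies $2-n/p>0$ and $\tau\leq 1$. The volume condition (1) yields a rescaled volume ratio bounded below by $\omega(n)\eta^n$, whence the Sobolev inequality $|f|_{2n/(n-2)}\leq A_0|\nabla f|_2$ for $f\in C_0^\infty(\tilde B(x,1))$ holds with $A_0=A_0(n,\eta)$, by the classical implication from volume ratio bound to Sobolev constant (arguments of Croke, or of Gallot). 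The explicit form of
\[
\varpi(n,\eta)=\tfrac{7}{8}\cdot 2^{-1-\frac{2}{n}}\cdot\tfrac{(n-2)^2}{n^2(n-1)^2}\cdot\alpha^2(n-1)\cdot\alpha^{\frac{2}{n}-2}(n)\cdot\eta^{2n+2}
\]
is engineered precisely so that this $A_0$ verifies $\varpi(n,\eta)\leq \beta/A_0^2$ with $\beta\leq 1/n^2$, matching the hypothesis of Proposition \ref{noncollapsed} with room to spare. The content of this step is essentially the numerical bookkeeping verifying that the Croke-derived Sobolev constant, squared and inverted, absorbs all the dimensional factors composing $\varpi$.

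With all hypotheses in place, for any target $\epsilon'>0$ Proposition \ref{noncollapsed} supplies $\upsilon'=\upsilon'(n,p,K,\tau,\eta,\epsilon')$ so that $\tilde B(x,\upsilon')$ lies within $d_{TL}$-distance $\epsilon'$ of a Euclidean domain; undoing the rescaling, $B(x,\upsilon'\tau)$ in the original metric inherits this property. I then invoke Proposition \ref{my iso bound}: choosing $\epsilon'$ small enough depending only on $n$ and $\epsilon$ so that $e^{-(n-1)\epsilon'}\geq 1-\epsilon$ forces the isoperimetric constant of $B(x,\upsilon'\tau)$ to lie within a factor $1-\epsilon$ of $C_E$. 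Setting $\sigma:=\upsilon'\tau$ completes the proof. The main conceptual obstacle is Step 1 — extracting the correct near-optimal Sobolev constant from a bare volume ratio lower bound and verifying that it dovetails numerically with $\varpi(n,\eta)$; the remaining steps are a cascade application of results already established in the excerpt.
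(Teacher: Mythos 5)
Your three-step funnel matches the paper's architecture, and Steps 2 and 3 are carried out essentially as in the paper. The gap is in Step 1, in how you obtain the Sobolev constant $A_0$. You assert it follows from the rescaled volume ratio lower bound alone, ``by the classical implication from volume ratio bound to Sobolev constant (arguments of Croke, or of Gallot).'' No such implication exists: a lower volume ratio bound by itself gives no isoperimetric control whatsoever. Croke's argument requires an injectivity radius lower bound, which is not available here; Gallot's requires an integral Ricci curvature hypothesis coupled with the volume bound. You do rescale $\digamma$-condition (3) to verify hypothesis (3) of Proposition \ref{noncollapsed}, but you never feed the Ricci bound into the derivation of $A_0$, which is where it is actually needed.

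The paper fills this step with Theorem \ref{Deane's Iso bounds} (Deane Yang), which uses exactly $\digamma$-conditions (1) and (3) (the latter dominating $|Ric_-|_p$) to produce the explicit isoperimetric constant $C_s=(1-\epsilon)2^{1-\frac1n}\alpha(n-1)\alpha^{\frac1n-1}(n)\eta^{n+1}$ on a ball $B(x,R)$ of definite radius. One then sets $A_0=\frac{2n-2}{n-2}C_s^{-1}$, and a short computation with $\beta=1/n^2$ gives
\[
\frac{\beta}{A_0^2}=2^{-\frac2n}\frac{(n-2)^2}{n^2(n-1)^2}\alpha^2(n-1)\alpha^{\frac2n-2}(n)\eta^{2n+2}\cdot(1-\epsilon)^2,
\]
so that $\varpi(n,\eta)=\frac{7}{16}\left(\frac{\beta}{A_0^2}\right)\big|_{\epsilon=0}<\frac{\beta}{A_0^2}$ once $\epsilon$ is small; this is precisely where the seemingly arbitrary prefactor $\frac78\cdot2^{-1-\frac2n}$ in $\varpi$ comes from. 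Without Theorem \ref{Deane's Iso bounds} you have no access to the explicit $C_s$, so the ``numerical bookkeeping'' you identify as the main obstacle cannot in fact be carried out with the tools you cite. The remedy is to name Theorem \ref{Deane's Iso bounds} as the source of the Sobolev constant and to note the double role of $\digamma$-condition (3): it verifies hypothesis (3) of Proposition \ref{noncollapsed} directly, and, through Theorem \ref{Deane's Iso bounds}, it produces the $A_0$ against which $\digamma$-condition (2) is calibrated.
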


\begin{proof} This is
directly implied by Theorem  \ref{Deane's Iso bounds} , Proposition
\ref{noncollapsed} and \ref{my iso bound}. What is  interesting is
the computation of the constant $\varpi(n,\ \eta)$. Notice that
 $L^2$ Sobolev constant in Theorem \ref{T1} is  $A_0=\frac{2n-2}{n-2}C^{-1}_s,\ C_s$ is the
 underlying isoperimetric constant which can be found in Theorem
 \ref{Deane's Iso bounds}. Furthermore  the $C_s$ in Theorem
 \ref{Deane's Iso bounds} is close to the isoperimetric constant of
 the upper hemisphere.
\end{proof}

Now we are ready to prove Theorem \ref{T2}.

 \begin{proof}{of Theorem \ref{T2}}:\\ It suffices to prove the case: $r=1$.
 By proposition \ref{strong noncollapsed}  we see
 that $\exists \ \delta$ such that the assumption of almost Euclidean isoperimetrc constant in
 Perelman's theorem holds in a smaller ball $B{(x_0,\delta)}$. Then
 using Perelman's Theorem \ref{T3}, we see that the conclusion of Theorem \ref{T3}  holds in a even
 smaller ball: $B_t(x_0,\hat{\epsilon}),\ \hat{\epsilon}=\epsilon
 \delta$ in which $\epsilon$ is exact the one in   Theorem \ref{T3}.

\end{proof}

\section{Extention of Local Ricci Flow under Pointwise Bound of Curvature}
This section is devoted to the proof of Theorem \ref{smooth
extension of LRF}. We just provide the details  of  the  smooth
extension part of Theorem \ref{T1}  which is one of the main
theorems in
     \cite{deane}.

\begin{thm}{Long Time Existence of LRF:}\\
\label{smooth extension of LRF}Suppose $\{g(t), M,\ \chi\in
C_0^{\infty}(M), \ t\in [0,T)\}$ is a n-dim solution to Local Ricci
Flow and $T<\ \infty$. If $|\chi^2 Rm|_{\infty}\leq C$, $C$ is
independent of $t$, then the solution could be extended smoothly
beyond T.
\end{thm}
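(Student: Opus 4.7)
The plan is to mimic the proof of Hamilton's long-time existence criterion for ordinary Ricci flow, but with the cutoff function $\chi$ woven carefully through every estimate, ultimately producing a smooth limiting metric $g(T)$ that we restart from via Yang's short-time existence.

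First I would record the easy metric-equivalence step. Since $|\partial_t g| = 2|\chi^2 Ric| \leq 2|\chi^2 Rm|_\infty \leq 2C$, integration gives $e^{-2CT} g(0) \le g(t) \le e^{2CT} g(0)$ uniformly on $M\times [0,T)$. In particular, on the complement of $\mathrm{supp}\,\chi$ the metric is literally frozen, so the only possible obstruction to extension lies over $\mathrm{supp}\,\chi$. On that region the equivalence of metrics lets us interchange norms at $g(t)$ with norms at $g(0)$ freely and gives uniform control of volumes, Sobolev/isoperimetric constants and covering numbers.

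Next I would establish bounds on every derivative of curvature. Hamilton's classical Shi estimate does not apply verbatim because the factor $\chi^2$ multiplying $Ric$ produces, upon commuting $\nabla^m$ past $\partial_t$, extra terms of the form $(\nabla^j \chi^2)\ast \nabla^{m-j} Rm$ and $(\nabla^j \chi^2)\ast \nabla^{m+1-j}\nabla Rm$ that cannot be absorbed into the leading parabolic term using a plain $L^2$ energy estimate — this is exactly the obstacle the author flags in the remark to this theorem. The way around it is the induction on $L^p$ already announced in the introduction: assuming an $L^p$ bound on the weighted quantity $\chi^{2m+2}|\nabla^m Rm|^2$ for all $m \le k-1$, derive the evolution inequality for $\chi^{2k+2}|\nabla^k Rm|^2$, note that the error terms produced by the $\chi$-weights involve only bounded derivatives of $\chi$ together with lower-order curvature derivatives which are already controlled in $L^p$ by induction, and then apply Moser iteration (as in Theorem~\ref{T1}) to upgrade the weighted $L^p$ bound to an $L^\infty$ bound of $|\nabla^k Rm|$ on any compact set interior to $\{\chi>0\}$. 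The base case $k=0$ is the hypothesis.

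Once $|\nabla^m Rm|_{g(t)}$ is uniformly bounded on $[0,T)$ on every compactum in $\mathrm{supp}\,\chi$ (and trivially bounded off $\mathrm{supp}\,\chi$), standard ODE arguments show that the metric components in a fixed background frame are Cauchy in $C^m$ as $t \uparrow T$ for every $m$, so $g(t)$ converges in $C^\infty_{\mathrm{loc}}$ to a smooth metric $g(T)$ on $M$. Finally I would invoke the short-time existence theorem for Local Ricci Flow from \cite{deane} (also \cite{li}) with initial data $g(T)$ and the same cutoff $\chi$ to continue the flow past $T$; uniqueness together with the smooth convergence ensures the extension is a genuine smooth prolongation of the given solution. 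The main obstacle throughout is the induction step on $\chi$-weighted derivatives: choosing the weight $\chi^{2m+2}$ (or the smallest power that makes the error terms absorbable) and verifying that the $L^p$ inductive hypothesis really dominates every cross term produced by the cutoff, without ever needing to differentiate $\chi$ more times than it is smooth, is the only delicate part.
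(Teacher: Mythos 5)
Your outline captures the correct broad strategy—metric equivalence, an induction to bound all covariant derivatives, then smooth convergence at $t=T$ plus Yang's short-time existence to restart—but it contains a genuine gap that is precisely the point the paper's remark after the theorem is flagging. When you say that the cross terms coming from the cutoff "involve only bounded derivatives of $\chi$," you are treating $\nabla^m\chi$ as a quantity you already control. Under Local Ricci Flow this is false for $m\geq 2$: although $\chi$ is a fixed function, its higher covariant derivatives $\nabla^m_{g(t)}\chi$ are time-dependent tensors whose evolution is driven by the Christoffel symbols, hence by $\nabla^{m-1}(\chi^2 Ric)$ and further derivatives of curvature (this is Lemma~\ref{cutoff function} in the paper). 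So $|\nabla^m\chi|$ is itself one of the unknowns in the induction, not an a priori bound you can feed into a Moser iteration. The paper's proof is organized around exactly this circular dependence: $|\nabla^m(\chi^2 Rm)|_p$ depends on $|\nabla^{m-1}Rm|$, which depends on $|\nabla^m\chi|$, which in turn depends on $|\nabla^{m-1}(\chi^2 Rm)|$—and to get the base case right the paper must first handle the coupled pair $|\nabla^2\chi|_p$ and $|\nabla(\chi^2 Rm)|_p$ together (Lemma~\ref{first derivative of curvature and hessian of cutoff fuction}, using the nontrivial ODE comparison Lemma~\ref{nontrivial ODE}) before the general induction (Propositions~\ref{bounds on derivative of Rm} and~\ref{Extension proposition }) can close. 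Your single induction on $m$ with a Shi-style weight $\chi^{2m+2}|\nabla^m Rm|^2$ never produces bounds on $\nabla^m\chi$, and without them the error terms cannot be absorbed.

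A secondary issue: even granting interior bounds on $|\nabla^m Rm|$ on compacta inside $\{\chi>0\}$, what the extension argument (Proposition~\ref{weak extension}, which adapts Hamilton's argument from \cite{HS}) actually needs is a uniform bound on $|\nabla^m(\chi^2 Rm)|_\infty$ on all of $M$, since $\partial_t g = -2\chi^2 Ric$ and the convergence of $g(t)$ as $t\uparrow T$ must be controlled globally, including near $\partial\,\mathrm{supp}\,\chi$. Estimating $\nabla^m(\chi^2 Rm)$ directly, as the paper does, gives exactly this; your formulation would leave the behavior near the boundary of the support of $\chi$ unaddressed. Also, the paper does not need Moser iteration in this section—since the Sobolev constant is uniformly controlled by metric equivalence (Proposition~\ref{estimate for cutoff function times curvature}), an $L^p$ bound with $p$ large already yields $L^\infty$ by Sobolev embedding.
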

\begin{rmk} The reason why Theorem \ref{smooth extension of LRF} holds is:\\
Fix $\ m,\ p,\ \exists\ \dot{p},\ \ddot{p},\ \breve{p}$ such that:
\begin{enumerate}
\item $|\nabla^m(\chi^2 Rm)|_p$ depend on  $|\nabla^{m-1}(
Rm)|_{\dot{p}}$ as highest order term.
\item   $|\nabla^{m-1}(
Rm)|_{\dot{p}}$ depend on $|\nabla^{m}\chi|_{\ddot{p}}$ as highest
order term.
\item  $|\nabla^{m}\chi|_{\ddot{p}}$ depend on $|\nabla^{m-1}(\chi^2
Rm)|_{\breve{p}}$ as highest order term.
\end{enumerate}
Then we could use induction to control  $|\nabla^m(\chi^2 Rm)|_p,\
\forall \ p$.
\end{rmk}

We first notice the following extension proposition.

\begin{prop}
\label{weak extension} If $g(t),\  t\in[0,T)$ is a solution to Local
Ricci Flow on $\Omega$:
$$\frac{\partial g}{\partial t}=-2(\chi^2Ric), \chi\in
C^{\infty}_0(\Omega)$$ Then if \ $\forall \ m\geq 0$,
$|\nabla^m(\chi^2 Rm)|_{\infty}$ is uniformly bounded on $[0,T)$,
then the Local Ricci Flow could be extended smoothly through T.
\end{prop}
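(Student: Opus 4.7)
The plan is to show that the uniform derivative bounds force $g(t)$ to converge smoothly to a limit metric $g(T)$ on every compact set, after which the short-time existence theorem for LRF of Yang (\cite{deane}) allows one to restart the flow past $T$, and uniqueness yields smooth extendibility. Since $\chi \in C^\infty_0(\Omega)$, setting $K = \mathrm{supp}\,\chi$, the metric is stationary on $\Omega \setminus K$, so all analysis takes place on $K$. The $m=0$ hypothesis gives $|\chi^2 Ric|_{g(t)} \leq c(n) C_0$, and integrating $\partial_t g = -2\chi^2 Ric$ over $[0,t]$ yields uniform two-sided equivalence $e^{-2c(n)C_0 T} g(0) \leq g(t) \leq e^{2c(n)C_0 T} g(0)$ on $K \times [0, T)$; in particular $g^{-1}(t)$ is uniformly bounded with respect to $g(0)$.

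To upgrade to smooth convergence I would bound every fixed-metric covariant derivative $|\nabla^m_{g(0)} g(t)|_{g(0)}$ uniformly in $t$. Since $g(t) = g(0) - 2 \int_0^t \chi^2 Ric\, ds$, it suffices to bound $|\nabla^m_{g(0)}(\chi^2 Ric)|_{g(0)}$ uniformly for each $m$. Starting from the hypothesis $|\nabla^m_{g(t)}(\chi^2 Rm)|_{g(t)} \leq C_m$, one first passes from $Rm$ to $Ric$ via contraction with $g^{-1}(t)$, using the metric equivalence from the previous step. The transition between $\nabla_{g(t)}$ and $\nabla_{g(0)}$ is then governed by the Christoffel difference $\Gamma(g(t)) - \Gamma(g(0))$, a tensor built from $\nabla_{g(0)} g(t)$. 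This sets up an induction on $m$: assuming bounds on $|\nabla^j_{g(0)} g(t)|_{g(0)}$ for $j \leq m$, one converts the time-dependent bound on $\nabla^m_{g(t)}(\chi^2 Ric)$ into a fixed-metric bound on $\nabla^m_{g(0)}(\chi^2 Ric)$, then integrates in $t$ to advance to order $m+1$.

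With uniform $C^m$ bounds in the reference metric and $|\partial_t g|_{g(0)}$ uniformly bounded, $g(t)$ is Cauchy in $C^\infty(K)$, so $g(t) \to g(T)$ smoothly as $t \to T^-$ and $g(T)$ is a smooth Riemannian metric on $M$. Applying Yang's LRF short-time existence with initial data $(M, g(T), \chi)$ produces a smooth solution $\tilde g(t)$ on $[T, T + \delta)$; concatenating with the original flow and noting that all spatial and time derivatives of $g$ and $\tilde g$ match at $t=T$ (by repeatedly differentiating the LRF equation) yields the desired smooth extension across $T$. The principal obstacle is executing the bootstrap cleanly, since the combinatorics of Leibniz-expanding $\nabla^m_{g(t)}(\chi^2 Rm)$ and repeatedly converting between the time-dependent and fixed connections are bookkeeping-heavy. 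Because at each stage every term appearing on the right is already controlled by lower-order quantities in the induction, however, no new analytic difficulty intrudes and the conclusion follows.
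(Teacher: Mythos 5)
Your proposal is correct and follows essentially the same route as the paper: the paper's own proof is a one-line appeal to Hamilton's extension argument in \cite{HS}, with the observation that $\nabla^m(\chi^2 Rm)$ replaces $\nabla^m Rm$, and you have simply spelled out that argument (uniform metric equivalence from the $m=0$ bound, induction on $|\nabla^m_{g(0)} g(t)|_{g(0)}$ via Christoffel-difference bookkeeping, smooth limit at $t=T$, restart via short-time existence of LRF). One small point deserves to be made explicit rather than absorbed into ``bookkeeping-heavy'': when you convert $\nabla^{m+1}_{g(t)}(\chi^2 Ric)$ to $\nabla^{m+1}_{g(0)}(\chi^2 Ric)$, the top-order Christoffel term $\nabla^m_{g(0)}\bigl(\Gamma(g(t))-\Gamma(g(0))\bigr)$ already involves $\nabla^{m+1}_{g(0)} g(t)$, the very quantity you are trying to bound at the inductive step, so the inequality you obtain after integrating in $t$ is of the form $|\nabla^{m+1}_{g(0)} g(t)|\leq C+C\int_0^t|\nabla^{m+1}_{g(0)} g(s)|\,ds$ and closes by Gronwall, not by direct integration. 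This is exactly how Hamilton handles it, so your conclusion stands; the step just should not be presented as a plain integration.
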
\begin{proof} We use the argument of Hamilton in \cite{HS}.
What is different is that here $\nabla^m(\chi^2Rm)$ plays the
crucial role.
\end{proof}

Next we compute the evolution for $Rm$:

\begin{eqnarray*}
& & \frac{\partial R_{ijkl}}{\partial t}
\\&=& \chi^2[\Delta R_{ijkl}
\\&+&(R_{ijpq}R_{pqkl}+2R_{ipkq}R_{jplq} -2R_{iplq}R_{jpkq})
\\&-&(R_{ip}R_{pjkl}+R_{jp}R_{ipkl}+R_{kp}R_{ijpl}+R_{lp}R_{ijkp})]
\\&-&[(\nabla_{i}\nabla_{k}\chi^2)R_{jl}+(\nabla_{j}\nabla_{l}\chi^2)R_{ik}-(\nabla_{i}\nabla_{l}\chi^2)R_{jk}-(\nabla_{j}\nabla_{k}\chi^2)R_{il}]
\\&-&[(\nabla_{i}\chi^2)\nabla_{k}R_{jl}+(\nabla_{k}\chi^2)\nabla_{i}R_{jl}+(\nabla_{j}\chi^2)\nabla_{l}R_{ik}+(\nabla_{l}\chi^2)\nabla_{j}R_{ik}
\\&-&(\nabla_{j}\chi^2)\nabla_{k}R_{il}-(\nabla_{k}\chi^2)\nabla_{j}R_{il}-(\nabla_{l}\chi^2)\nabla_{i}R_{jk}-(\nabla_{i}\chi^2)\nabla_{l}R_{jk}]
\end{eqnarray*} First, let $T$ and $S$ be two tensors, I would like to use the
"$T\ast S$" to denote all possible algebraic tensor product between
them, just for convenience. And here "$\ast$" is not necessarily
unimodular, i.e $|T\ast S|\leq |T|\cdot|S|$ does not necessarily
holds.
 Second, please notice that unless otherwise
stated , everything depends on time. \\

\begin{lem}\label{curvature}
\begin{eqnarray*}
\displaystyle & &\frac{\partial \nabla^m Rm}{\partial t}
\\&=&\chi^2\Delta\nabla^m
Rm+\sum^{m}_{i+j+k=m+2,\ k \leq m+1}\nabla^i\chi\ast\nabla^j\chi\ast
\nabla^{k}Rm
\\&+&\sum^{m}_{i+j+k=m}\nabla^i{(\chi^2)}\ast\nabla^jRm\ast
\nabla^{k}Rm
\end{eqnarray*}
\end{lem}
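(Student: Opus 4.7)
My plan is to prove Lemma \ref{curvature} by induction on $m$. The base case $m=0$ is exactly the explicit evolution formula for $R_{ijkl}$ displayed just before the lemma: the leading Laplacian term $\chi^2\Delta R_{ijkl}$ matches; the quadratic curvature terms in the second line all have the form $\chi^2 \cdot Rm\ast Rm$, which is accounted for by the sum $\sum_{i+j+k=0}\nabla^i(\chi^2)\ast \nabla^j Rm\ast\nabla^k Rm$ (only the term $i=j=k=0$); the Hessian terms $\nabla^2\chi^2\cdot Rm$ expand via Leibniz into $\chi\nabla^2\chi\ast Rm$ and $\nabla\chi\ast\nabla\chi\ast Rm$, which fit the first sum with $(i,j,k)=(2,0,0)$ and $(1,1,0)$; and the gradient terms $\nabla\chi^2\ast\nabla Rm=\chi\nabla\chi\ast\nabla Rm$ fit with $(i,j,k)=(1,0,1)$. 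Every coefficient of $Rm$ in the explicit formula has the required schematic shape and lies within the index constraint $k\leq m+1=1$.

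For the inductive step, assume the formula holds for $m$ and compute
\[
\frac{\partial}{\partial t}\nabla^{m+1}Rm=\nabla\Bigl(\frac{\partial}{\partial t}\nabla^m Rm\Bigr)+\bigl[\partial_t,\nabla\bigr]\nabla^m Rm.
\]
The commutator is handled by the standard formula $\partial_t\Gamma^{k}_{ij}\sim g^{-1}\nabla(\chi^2 Ric)$, which schematically equals $\chi^2\nabla Rm+\chi\nabla\chi\ast Rm$. Hence $[\partial_t,\nabla]\nabla^m Rm$ is a sum of terms of the form $\nabla^i(\chi^2)\ast\nabla^j Rm\ast\nabla^k Rm$ with $i+j+k=m+1$, feeding directly into the second sum. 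For the remaining piece I apply $\nabla$ to each term produced by the inductive hypothesis, using Leibniz. Differentiating $\chi^2\Delta\nabla^m Rm$ produces $\chi^2\nabla\Delta\nabla^m Rm+\nabla\chi^2\ast\Delta\nabla^m Rm$; commuting $\nabla$ past $\Delta$ on a curvature tensor introduces the identity $[\nabla,\Delta]T=Rm\ast\nabla T+\nabla Rm\ast T$, and substituting this gives $\chi^2\Delta\nabla^{m+1}Rm$ (the new leading term) together with contributions of the form $\chi^2\nabla Rm\ast\nabla^m Rm+\chi^2 Rm\ast\nabla^{m+1}Rm$ and $\nabla\chi^2\ast\Delta\nabla^m Rm$. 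Similarly, $\nabla$ applied to the two sums over $i+j+k=m+2$ and $i+j+k=m$ produces sums of the same two schematic types with index sum increased by one, yielding $i+j+k=m+3$ (and $k\leq m+2$) and $i+j+k=m+1$ respectively, which are precisely the index constraints for level $m+1$.

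The main thing to check is that every new term actually lands inside the prescribed index ranges—in particular that the $k\leq m+2$ bound on the first sum at level $m+1$ is never violated. The only place where a term of order $\nabla^{m+2}Rm$ could appear is in the leading $\chi^2\Delta\nabla^{m+1}Rm$, which has been split off separately, and in the commutator $\chi^2[\nabla,\Delta]\nabla^m Rm$, which contributes $\chi^2\cdot Rm\ast\nabla^{m+1}Rm$; this has $k=m+1\leq m+2$ and falls into the second sum (with $i=j=0$, $\nabla^0(\chi^2)=\chi^2$ absorbed), not the first, so no constraint is violated. I also need to note that when $\nabla\chi^2=2\chi\nabla\chi$ appears it is rewritten as $\nabla\chi\ast\nabla^0\chi\cdot(\text{factor of }\chi)$ inside the first sum or kept as $\nabla(\chi^2)$ inside the second sum, depending on which of the two sums absorbs the term; either way the formulation using $\nabla^i\chi\ast\nabla^j\chi$ and $\nabla^i(\chi^2)$ is flexible enough to absorb all combinations.

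The bookkeeping of indices is really the only obstacle—the computation itself is a mechanical Leibniz expansion combined with the standard commutation identity $[\nabla,\Delta]=Rm\ast\nabla(\cdot)+\nabla Rm\ast(\cdot)$ and the evolution of the Christoffel symbols. I do not expect any subtle cancellations or sign issues because the lemma is formulated purely in the schematic $\ast$ notation, where constants and signs are ignored and $|T\ast S|\leq|T|\cdot|S|$ is not asserted; the only requirement is a correct count of derivatives distributed between the $\chi$-factors and the $Rm$-factors, and this count is preserved by each application of Leibniz and of the commutator.
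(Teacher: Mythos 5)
Your proof is correct and follows essentially the same induction as the paper: commute $\partial_t$ past $\nabla$ via $\partial_t\Gamma \sim g^{-1}\nabla(\chi^2 Ric)$, commute $\nabla$ past $\Delta$, and sort every resulting term into the two schematic sums by derivative count. One small bookkeeping slip in your final accounting paragraph: the term that actually saturates the bound $k\le m+2$ in the first sum is $\nabla\chi^2\ast\Delta\nabla^m Rm \sim \chi\nabla\chi\ast\nabla^{m+2}Rm$, with $(i,j,k)=(0,1,m+2)$, not the commutator $\chi^2[\nabla,\Delta]\nabla^m Rm$ you single out (which only produces $\nabla^{m+1}Rm$); since you do list $\nabla\chi^2\ast\Delta\nabla^m Rm$ correctly in the Leibniz expansion and it does fit, the argument is unaffected, but the phrase ``the only place where a term of order $\nabla^{m+2}Rm$ could appear'' should name this term rather than the commutator.
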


\begin{proof} We use induction argument here. Suppose for $m-1$ the
conclusion holds, then we have:
\begin{eqnarray*}
& &\frac{\partial \nabla\nabla^{m-1} Rm}{\partial t}
\\&=&\nabla^{m-1}Rm\ast\nabla(\chi^2Ric)+\nabla[\chi^2\Delta\nabla^{m-1}
Rm] \\&+&\nabla[\sum_{i+j+k=m+1, k\leq
m}\nabla^i\chi\ast\nabla^j\chi\ast \nabla^{k}Rm]
\\&+&\nabla[\sum_{i+j+k=m-1}\nabla^i{(\chi^2)}\ast\nabla^j Rm\ast
\nabla^{k}Rm]
\end{eqnarray*}
Notice that
\begin{eqnarray*}
\\& & \nabla[\chi^2\Delta\nabla^{m-1}Rm]
\\&=& \chi^2
\Delta\nabla^{m}Rm+\chi^2\nabla^{m-1}Rm\ast\nabla Rm
\\&+&\chi^2\nabla^{m}Rm\ast Rm+(\nabla\chi^2)\ast\Delta\nabla^{m-1}Rm
\end{eqnarray*}
If we consider
$$\nabla^{m-1}Rm\ast\nabla(\chi^2Ric
),\ \chi^2\nabla^{m-1}Rm\ast\nabla Rm,\ \chi^2\nabla^{m}Rm\ast Rm$$
as
 the terms in
$\nabla[\sum_{i+j+k=m-1}\nabla^i{(\chi^2)}\ast\nabla^j Rm\ast
\nabla^{k}Rm]$ and  $$(\nabla\chi^2)\ast\Delta\nabla^{m-1}( Rm)$$ as
one of the terms in $$\nabla[\sum_{i+j+k=m+1,\ k\leq
m}\nabla^i\chi\ast\nabla^j\chi\ast \nabla^{k}Rm]$$ then we are done.
\end{proof}

In the same way we could see:
\begin{lem}\label{cutoff function times curvature}
\begin{eqnarray*}
& & \frac{\partial \nabla^m (\chi^2Rm)}{\partial t}
\\&=&\chi^2\Delta\nabla^m
(\chi^2Rm)+\sum^{m}_{i+j+k=m+2, \ k\leq \
m+1}\nabla^i\chi\ast\nabla^j\chi\ast \nabla^{k}(\chi^2Rm)
\\&+&\sum^{m}_{i+j+k=m,\ k\leq \ m-1}\nabla^i{(\chi^2)}\ast\nabla^j(\chi^2Rm)\ast
\nabla^{k}Rm
\\&+&\chi^2Rm\ast\nabla^m(\chi^2Rm)
\end{eqnarray*}
\end{lem}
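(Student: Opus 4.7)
The plan is to prove this by induction on $m$, paralleling the proof of Lemma \ref{curvature}. For the base case $m=0$, I would start from the explicit evolution of $R_{ijkl}$ written out just above Lemma \ref{curvature}. Since $\chi$ is time-independent, $\partial_t(\chi^2 Rm) = \chi^2 \partial_t Rm$, so the task is to rewrite the right-hand side in terms of $\Delta(\chi^2 Rm)$ instead of $\Delta Rm$. The identity
$$\chi^2 \Delta(\chi^2 Rm) = \chi^4 \Delta Rm + \chi^2 \Delta(\chi^2)\cdot Rm + 2\chi^2 \nabla(\chi^2)\cdot\nabla Rm$$
reorganizes the Laplacian piece. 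The two correction terms, together with the already-present $\nabla^2(\chi^2)\ast Ric$ and $\nabla(\chi^2)\ast\nabla Ric$ contributions in $\partial_t R_{ijkl}$, are precisely of the form $\nabla^i\chi\ast\nabla^j\chi\ast\nabla^k(\chi^2 Rm)$ with $i+j+k=2$ and $k\leq 1$, so they fall into the first sum. The quadratic curvature contributions collapse to $\chi^2 Rm \ast \chi^2 Rm$, i.e.\ the last term, and the second sum is vacuous.

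For the inductive step, assume the formula holds for $m-1$ and apply a single covariant derivative $\nabla$ to both sides. On the left, the commutator $[\partial_t,\nabla]$ acting on $\nabla^{m-1}(\chi^2 Rm)$ produces a correction of schematic form $\nabla(\chi^2 Ric)\ast\nabla^{m-1}(\chi^2 Rm)$; splitting the Leibniz gives $\nabla(\chi^2)\ast Ric\ast\nabla^{m-1}(\chi^2 Rm)$ and $\chi^2\nabla Ric\ast\nabla^{m-1}(\chi^2 Rm)$, both of which fit the second sum with $i+j+k=m$ and $k\leq m-1$. On the right, Leibniz distributes $\nabla$ over each summand: differentiating $\chi^2\Delta\nabla^{m-1}(\chi^2 Rm)$ gives $\chi^2\Delta\nabla^m(\chi^2 Rm)$ after commuting $\nabla$ with $\Delta$ (the commutator contributing $Rm\ast\nabla^m(\chi^2 Rm)$ to the last slot and lower-order pieces to the sums), plus $(\nabla\chi^2)\ast\Delta\nabla^{m-1}(\chi^2 Rm)$ for the first sum; differentiating the existing first and second sums shifts one index of $i,j,k$ up by one, preserving their index constraints (the bound $k\leq m-2$ in the old second sum becomes $k\leq m-1$ in the new one, exactly as required); and differentiating $\chi^2 Rm\ast\nabla^{m-1}(\chi^2 Rm)$ produces $\nabla(\chi^2)\ast Rm\ast\nabla^{m-1}(\chi^2 Rm)+\chi^2\nabla Rm\ast\nabla^{m-1}(\chi^2 Rm)$ for the second sum, together with the new last-slot term $\chi^2 Rm\ast\nabla^m(\chi^2 Rm)$.

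The main obstacle is bookkeeping rather than analysis: one must verify that every term produced fits into exactly one of the three prescribed slots and respects the strict bound $k\leq m-1$ in the second sum. This bound is the structurally important feature of the lemma, encoding the asymmetry that $\chi^2 Rm$-factors may carry up to $m$ derivatives while the bare-$Rm$ factor carries strictly fewer. It is preserved in the induction because whenever a $\nabla$ lands on the bare-$Rm$ factor of a second-sum term, the partner $\chi^2 Rm$-factor still has at most $m-1$ derivatives; the only legitimate appearance of $\nabla^m(\chi^2 Rm)$ next to a bare-curvature factor is in the isolated last term, where that bare factor is $Rm$ itself with no derivatives.
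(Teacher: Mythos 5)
Your proposal is correct and follows the same inductive route the paper indicates when it writes that Lemma \ref{cutoff function times curvature} is obtained ``in the same way'' as Lemma \ref{curvature}: differentiate the $(m-1)$-formula, absorb the $[\partial_t,\nabla]$ correction via $\partial_t\Gamma\sim\nabla(\chi^2 Ric)$, and reclassify the Leibniz output among the three slots. One small bookkeeping caveat: for $m=1$ the commutator piece $\chi^2\nabla Ric\ast\nabla^{m-1}(\chi^2 Rm)$ has $k=1>m-1$, so it does not land in the second sum as you state; rather, after rewriting $\chi^2\nabla Ric=\nabla(\chi^2 Ric)-(\nabla\chi^2)Ric$, it is absorbed into the final term $\chi^2 Rm\ast\nabla^m(\chi^2 Rm)$ together with a legitimate second-sum contribution with $k=0$.
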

\begin{lem}\label{cutoff function}
\begin{eqnarray*}\displaystyle
& &\frac{\partial}{\partial t}|\nabla^m\chi| \leq C(n,k)\sum_{k\leq
m-1}|\nabla^{k}(\chi^2Ric)|\cdot|\nabla^{m-k}\chi|
\end{eqnarray*}
\end{lem}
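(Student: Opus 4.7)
The plan is to induct on $m$, exploiting the fact that $\chi$ is time-independent while $\nabla$ depends on $g(t)$. Hence the time dependence of $\nabla^m\chi$ comes entirely from two sources: (a) the evolution of the Christoffel symbols appearing in the coordinate expansion of $\nabla^m\chi$, and (b) the evolution of the metric in the inner product defining $|\cdot|$. Throughout I would use the standard formula
$$\partial_t\Gamma^p_{ij} = -g^{pl}\bigl[\nabla_i(\chi^2 R_{jl}) + \nabla_j(\chi^2 R_{il}) - \nabla_l(\chi^2 R_{ij})\bigr],$$
which follows from $\partial_t g_{ij}=-2\chi^2 R_{ij}$ and the usual derivation of the variation of Christoffel symbols.

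For the base case $m=1$, the one-form $\nabla\chi = d\chi$ is metric-independent, so I would differentiate $|\nabla\chi|^2 = g^{ij}\partial_i\chi\,\partial_j\chi$ directly; using $\partial_t g^{ij}=2\chi^2 R^{ij}$ yields $\partial_t|\nabla\chi|\leq C(n)|\chi^2 Ric|\,|\nabla\chi|$, which matches the claim with $k=0$. For the inductive step, I would write $\nabla^m\chi=\nabla(\nabla^{m-1}\chi)$ and differentiate its coordinate components in $t$. This produces three types of contributions: $\partial_t$ acting on the components of $\nabla^{m-1}\chi$ sitting inside (bounded by the inductive hypothesis and the fact that an extra $\nabla$ can be commuted out to produce terms $\nabla^k(\chi^2 Ric)\ast\nabla^{m-k}\chi$ with $k\le m-1$); a factor $\partial_t\Gamma$ multiplied by $\nabla^{m-1}\chi$, which contributes schematically $\nabla(\chi^2 Ric)\ast\nabla^{m-1}\chi$, i.e.\ the $k=1$ term; and, upon taking the norm, a factor $\partial_t g^{ij}$ hitting the inner product, which yields the $k=0$ term $|\chi^2 Ric|\,|\nabla^m\chi|$. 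Taking absolute values and summing produces the claimed estimate.

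The main technical point is simply keeping track of where derivatives $\nabla$ land when differentiating $\nabla(\nabla^{m-1}\chi)$: one must verify that commuting $\nabla$ past the $\partial_t\Gamma$-type terms, combined with applying the inductive bound on $\partial_t(\nabla^{m-1}\chi)$, yields only products of the allowed form $|\nabla^k(\chi^2 Ric)|\cdot|\nabla^{m-k}\chi|$ with $k\leq m-1$. No pairing of two high-order factors can appear, because every time a $\nabla$ gets moved onto $\chi$ it is matched by one fewer derivative on $\chi^2 Ric$, and vice versa. This index bookkeeping is entirely analogous to that carried out in Lemmas \ref{curvature} and \ref{cutoff function times curvature} above, so the induction should go through in a straightforward if tedious manner.
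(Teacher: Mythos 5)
The paper states Lemma \ref{cutoff function} without any proof, so there is no argument of the paper to compare against; your sketch identifies the correct mechanism (variation of the Christoffel symbols, plus $\partial_t g^{-1}$ entering the norm). One point needs to be tightened before the induction closes cleanly: the lemma as written bounds only the scalar $\partial_t|\nabla^{m-1}\chi|$, but in your inductive step you want to control $\nabla\bigl(\partial_t\nabla^{m-1}\chi\bigr)$, which requires a pointwise bound on the tensor $\partial_t(\nabla^{m-1}\chi)$ itself. These are not equivalent: $\partial_t|T|$ mixes $\langle\partial_t T,T\rangle$ with the $\partial_t g^{-1}$ term, and those can cancel, so the scalar inequality at stage $m-1$ does not recover $|\partial_t\nabla^{m-1}\chi|$. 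The remedy is to carry the induction at the tensor level, proving
$$\partial_t(\nabla^m\chi)=\sum_{k=1}^{m-1}\nabla^k(\chi^2 Ric)\ast\nabla^{m-k}\chi,\qquad m\ge 1,$$
with base case $\partial_t(d\chi)=0$ and inductive step from $\partial_t\nabla=\nabla\partial_t+(\partial_t\Gamma)\ast$ together with $\partial_t\Gamma^p_{ij}=-g^{pl}[\nabla_i(\chi^2R_{jl})+\nabla_j(\chi^2R_{il})-\nabla_l(\chi^2R_{ij})]$; only then pass to norms, where $\partial_tg^{ij}=2\chi^2R^{ij}$ supplies the extra $k=0$ term $|\chi^2 Ric|\,|\nabla^m\chi|$ and gives exactly the stated sum over $0\le k\le m-1$. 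With that reorganization the argument you outline goes through.
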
 And we should state the following lemmas:
\begin{lem}
\label{ODE2}If $f$ and $g$ are  nonnegative functions of t ,
$t\in[0,T)$, b is a nonnegative constant. Assume $f$ satisfy:
  $$\frac{df}{dt}\leq g+bf
  $$
  Then on $[0,T)$ we have :

   $$f\leq [f(0)+\int^t_0e^{-bs}gds]e^{bt}$$

\end{lem}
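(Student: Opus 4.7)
This is a standard Gr\"onwall-type statement and the natural approach is the integrating factor method. The plan is to multiply both sides of the differential inequality by $e^{-bt}$ so that the left-hand side becomes an exact derivative, then integrate and multiply back by $e^{bt}$.

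First I would rewrite the hypothesis $f'(t)\leq g(t)+bf(t)$ as $f'(t)-bf(t)\leq g(t)$. Multiplying through by the nonnegative factor $e^{-bt}$ preserves the inequality and gives
\[
e^{-bt}f'(t)-be^{-bt}f(t)\leq e^{-bt}g(t),
\]
and the left-hand side is exactly $\frac{d}{dt}\bigl(e^{-bt}f(t)\bigr)$. Integrating from $0$ to $t$ (using the fundamental theorem of calculus, which applies here since $f$ is assumed differentiable on $[0,T)$) yields
\[
e^{-bt}f(t)-f(0)\leq \int_0^t e^{-bs}g(s)\,ds.
\]
Rearranging and multiplying by $e^{bt}$ then produces the claimed bound
\[
f(t)\leq \Bigl[f(0)+\int_0^t e^{-bs}g(s)\,ds\Bigr]e^{bt}.
\]

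There is essentially no obstacle here; the only point that requires a tiny bit of care is the sign of $b$ (the multiplication by $e^{-bt}$ must preserve the inequality, which it does because exponentials are positive, independent of the sign of $b$), and the nonnegativity of $g$ is not actually needed for the argument, though it is used in the applications of the lemma to ensure the right-hand side is monotone. The nonnegativity of $f$ likewise plays no role in the proof of the inequality itself.
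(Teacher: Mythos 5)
Your integrating factor argument is correct, and the paper itself states Lemma \ref{ODE2} without proof, treating it as a standard Gr\"onwall-type fact. Your approach is the canonical one, and your side remark that nonnegativity of $f$ and $g$ is not actually needed for the derivation is also accurate.
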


\begin{lem}
\label{nontrivial ODE} If $y(t)$ is nonnegative functions of t ,
$t\in[0,T)$. a , b are nonnegative constants. Assume \ $y$ satisfy:
  $$\frac{dy}{dt}\leq a\int^{t}_0yds +by+1$$
  Then  $\exists \ w,\ k$ which depend on $a, \ b$ and $y(0)$ such
  that:

   \begin{eqnarray*}
y\ \leq \ we^{kt}, \ \forall t\in \ [0,T).
\end{eqnarray*}

\end{lem}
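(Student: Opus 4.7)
The plan is to reduce the integro-differential inequality to a first-order linear differential inequality to which Lemma \ref{ODE2} (standard Gronwall) applies. The obstacle in applying Lemma \ref{ODE2} directly is the presence of the integral term $\int_0^t y\,ds$; the trick is to absorb this term into a new dependent variable.

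Concretely, I would set
\[
Y(t) = \int_0^t y(s)\,ds, \qquad z(t) = y(t) + Y(t).
\]
Since $y \geq 0$, we have $Y \geq 0$ and $Y$ is nondecreasing, so both $y \leq z$ and $Y \leq z$ hold pointwise on $[0,T)$. Differentiating and applying the hypothesis gives
\[
z'(t) = y'(t) + y(t) \leq a Y(t) + b\,y(t) + 1 + y(t) \leq a z(t) + (b+1) z(t) + 1 = (a+b+1)\,z(t) + 1.
\]

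At this point I would invoke Lemma \ref{ODE2} with $f = z$, $g \equiv 1$, and the constant $b$ there replaced by $a+b+1$, yielding
\[
z(t) \leq \Bigl[y(0) + \int_0^t e^{-(a+b+1)s}\,ds\Bigr] e^{(a+b+1) t} \leq \Bigl[y(0) + \tfrac{1}{a+b+1}\Bigr] e^{(a+b+1) t}.
\]
Since $y(t) \leq z(t)$, taking
\[
w = y(0) + \frac{1}{a+b+1}, \qquad k = a+b+1
\]
(both of which depend only on $a$, $b$, and $y(0)$, as required) completes the proof. Note that $a+b+1 > 0$ automatically since $a,b \geq 0$, so no degenerate case arises.

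I do not anticipate a real obstacle here; the only subtle point is recognizing that adding $Y$ to $y$ turns the nonlocal-in-time term into a local one, after which everything is standard Gronwall. If one preferred not to introduce $z$, one could equivalently observe that $Y$ satisfies the second-order inequality $Y'' \leq aY + bY' + 1$ and compare directly with the linear ODE $Y'' - bY' - aY = 1$ whose solutions grow exponentially in the roots of $\lambda^2 - b\lambda - a = 0$; but the substitution $z = y+Y$ above is cleaner and stays within the framework of Lemma \ref{ODE2}.
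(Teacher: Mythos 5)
Your proof is correct, but it takes a genuinely different route from the paper. The paper's proof is a one-line comparison argument: choose $w>y(0)$ and $k$ large enough that $\varphi(t)=we^{kt}$ is a \emph{strict} supersolution, i.e.\ $\varphi' > a\int_0^t\varphi\,ds + b\varphi + 1$ (easy: $\varphi'=wke^{kt}$ while $a\int_0^t\varphi\,ds+b\varphi+1\le \bigl(\frac{a}{k}+b\bigr)we^{kt}+1$, so any $k$ with $k>\frac{a}{k}+b$ and then $w$ large works), and then invoke the comparison principle for this integro-differential inequality. That comparison step is left implicit but is standard: at a first crossing time $t_0$ one would have $y(t_0)=\varphi(t_0)$, $y\le\varphi$ on $[0,t_0]$, hence $y'(t_0)\ge\varphi'(t_0)$ and $\int_0^{t_0}y\le\int_0^{t_0}\varphi$, contradicting strict supersolutionhood. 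Your argument instead performs the substitution $z=y+\int_0^t y$, which converts the nonlocal term into a local one, and then hands the resulting inequality $z'\le(a+b+1)z+1$ to Lemma \ref{ODE2}. This has two advantages: it avoids having to reprove a comparison principle (it simply reuses the already-established Lemma \ref{ODE2}), and it produces the explicit admissible constants $w=y(0)+\frac{1}{a+b+1}$, $k=a+b+1$. The paper's approach is shorter on the page but leaves the comparison step to the reader; yours is fully self-contained modulo Lemma \ref{ODE2}. Both are valid.
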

\begin{proof}It suffices to choose $w,\ k$ \ large enough to ensure:
$$(we^{kt})'> a\int^{t}_0(we^{kt})ds +b(we^{kt})+1 \ \ \textrm{and}\
w>\ y(0)$$ to get the comparison function.
\end{proof}
\begin{prop}
\label{estimate for cutoff function times curvature}If  $|\chi^2
Rm|_{\infty}$ is uniformly bounded along a given solution of LRF
$g(t), t\in [0,\ T)$,\  then $ |\nabla\chi|_{\infty}$ and the
Sobolev constant $A$ are uniformly bounded on $[0,T)$.
\end{prop}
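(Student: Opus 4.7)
The plan is to reduce both bounds to the simple observation that $g(t)$ and $g(0)$ are uniformly equivalent as quadratic forms on $[0,T)$. Since $|\chi^2 Rm|_\infty$ is bounded by hypothesis and $|Ric|_g \leq C(n)|Rm|_g$ pointwise, the flow equation gives $\partial_t g(v,v) = -2\chi^2 Ric(v,v)$, so $|\partial_t \log g(v,v)| \leq 2C(n)|\chi^2 Rm|_\infty$ for each nonzero $v \in T_pM$. Integrating over $[0,t]$ and using $T < \infty$, one obtains a constant $\Lambda := \exp(2C(n)T|\chi^2 Rm|_\infty)$ such that
$$\Lambda^{-1} g(0)(v,v) \leq g(t)(v,v) \leq \Lambda\, g(0)(v,v)$$
for every $v$ and every $t \in [0,T)$. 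Passing to the inverses and the determinants yields corresponding equivalences for $g^{ij}(t)$ and for the volume form $dV_{g(t)}$, with constants depending only on $\Lambda$ and $n$.

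The bound on $|\nabla\chi|_{\infty}$ is then immediate: $\chi$ itself is a fixed smooth compactly supported function on $M$, so $|\nabla\chi|_{g(t)}^2 = g^{ij}(t)\,\partial_i\chi\,\partial_j\chi \leq \Lambda\,|\nabla\chi|_{g(0)}^2$, and the right-hand side is a finite constant determined by the initial data. Alternatively, Lemma \ref{cutoff function} specialized to $m=1$ gives $\partial_t|\nabla\chi| \leq C(n)|\chi^2 Ric|_\infty|\nabla\chi|$, and Lemma \ref{ODE2} then yields the same conclusion via Gr\"onwall.

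For the Sobolev constant, the uniform equivalence of metrics transfers the initial Sobolev inequality to every time slice. Concretely, for any test function $f$ the $L^{2n/(n-2)}$ norm against $dV_{g(t)}$ and the $L^2$ norm of the gradient with respect to $g(t)$ are each comparable to the same quantities with respect to $g(0)$ through explicit powers of $\Lambda$, so the Sobolev constant at time $t$ is bounded by $A(0)$ times a fixed power of $\Lambda$, hence uniformly on $[0,T)$. The main obstacle, if any, is simply tracking that the initial Sobolev constant is finite---part of the standing data for LRF---and keeping the exponents correct when changing reference metrics; no genuinely new analysis is required here, and indeed this proposition serves as the base case for the inductive higher-order estimates announced in the remark following Theorem \ref{smooth extension of LRF}.
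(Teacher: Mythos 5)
Your argument is correct and is exactly the paper's intended proof: the paper's own proof consists solely of the remark that the metrics $g(t)$, $t\in[0,T)$, are uniformly equivalent to one another, and you have simply supplied the straightforward details (integrating $\partial_t\log g(v,v)$ using the $|\chi^2 Rm|_\infty$ bound and $T<\infty$, then transferring $|\nabla\chi|$ and the Sobolev inequality through the resulting bi-Lipschitz equivalence).
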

\begin{proof}One only have to notice that the metrics $g(t), t\in [0,\ T)$ are uniformly
equivalent to each other.
\end{proof}
From now we will do curvature estimate. I will denote all the
quantities which are  uniformly bounded  independent of time on
$[0,T)$ by $C$. The $C$ in different places may have different
meaning, but it should be clear from the context that what does the
$C$ represent. If some quantity is assumed to be uniformly bounded
on $[0,T)$ in a specific lemma, I would also denote it by $C$.
\begin{lem}\label{curvature integral}
$\forall \ p> 100$. If \ $g(t), t\in[0,T)$ is a smooth solution to
$LRF$ on M and satisfy:

$|\chi^2Rm|_{\infty} $ and $|\nabla\chi|_{\infty}$ are uniformly
bounded  on $[0,T)$, \\ then $|Rm|_p$ is uniformly bounded on $[0,\
T)$.

\end{lem}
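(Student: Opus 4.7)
The plan is a weighted energy estimate: set $y(t) := \int_M |Rm|^p\,dV_{g(t)}$, differentiate in $t$ using the $Rm$-evolution formula displayed just before Lemma \ref{curvature} together with $\partial_t\,dV = -\chi^2 R\,dV$, and prove $y'(t) \leq C\,y(t) + C$; the boundedness of $y$ on $[0,T)$ then follows from Lemma \ref{ODE2} since $T<\infty$. Two simplifying remarks: outside $\mathrm{supp}(\chi)$ the metric is frozen, so the contribution from that region to $y$ is constant in $t$; and by Proposition \ref{estimate for cutoff function times curvature} the metrics $g(t)$ are uniformly equivalent to $g(0)$, so volume-form comparisons are harmless. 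The volume-form piece $-\int \chi^2 R\,|Rm|^p\,dV$ is immediately $\leq Cy$ because $|\chi^2 R|\leq C\,|\chi^2 Rm|_\infty\leq C$.

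The principal Laplacian term $p\int\chi^2|Rm|^{p-2}\langle Rm,\Delta Rm\rangle\,dV$ is integrated by parts; using $R_{ijkl}\nabla^a R^{ijkl} = |Rm|\,\nabla^a|Rm|$ this produces the nonpositive good terms $-p\int\chi^2|Rm|^{p-2}|\nabla Rm|^2\,dV$ and $-p(p-2)\int\chi^2|Rm|^{p-2}|\nabla|Rm||^2\,dV$, plus the cross term $-2p\int\chi\,\nabla\chi\cdot |Rm|^{p-1}\nabla|Rm|\,dV$, which Young's inequality (using $|\nabla\chi|_\infty\leq C$) splits into $\varepsilon\cdot$(good term) and $C_\varepsilon\int|\nabla\chi|^2|Rm|^p\,dV\leq Cy$. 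The cubic-curvature contribution $\chi^2(Rm\ast Rm\ast Rm)$ is dominated pointwise by $C\chi^2|Rm|^{p+1} = C(\chi^2|Rm|)|Rm|^p\leq C|Rm|^p$, hence contributes $\leq Cy$.

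The genuine obstacle is the term $(\nabla^2\chi^2)\ast Rm$, since the hypotheses do not control $|\nabla^2\chi|_\infty$. My remedy is to integrate by parts once so that only one derivative lands on $\chi^2$; expanding $\nabla_i\bigl[|Rm|^{p-2}R^{ijkl}R_{jl}\bigr]$ and using the contracted second Bianchi identity (so that $\nabla_i R^{ijkl}$ and $\nabla_i R_{jl}$ are both bounded by $|\nabla Rm|$) yields an integrand of schematic form $\chi|\nabla\chi|\cdot|Rm|^{p-1}|\nabla Rm|$, which Young again splits between the good term and a residual $C\int|\nabla\chi|^2|Rm|^p\leq Cy$. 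The companion term $(\nabla\chi^2)\ast \nabla Rm$ already has this form and is handled in the same Young step without further integration by parts. Assembling all contributions gives $y'(t)\leq Cy(t)+C$ on $[0,T)$, and Lemma \ref{ODE2} delivers the uniform $L^p$ bound.
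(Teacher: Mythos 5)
Your proposal is correct and follows essentially the same route as the paper: differentiate $\int|Rm|^p$ along the LRF, observe that the cubic and quadratic terms are absorbed using $|\chi^2 Rm|_\infty\leq C$, and integrate by parts the $\chi\nabla^2\chi\ast Rm$ term so that only one derivative lands on $\chi$, with the resulting cross terms split by Young's inequality into the good dissipative term $\varepsilon\int\chi^2|\nabla Rm|^2|Rm|^{p-2}$ and $C\int|Rm|^p$, yielding $y'\leq Cy$ and a Gronwall bound on $[0,T)$. The invocation of the contracted second Bianchi identity is not strictly needed (any contraction of $\nabla Rm$ is pointwise $\lesssim|\nabla Rm|$) but is harmless, and your volume-form and uniform-equivalence remarks correctly track the only places the paper's schematic computation suppresses.
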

\begin{proof}Recall that

\begin{eqnarray*}
& &\frac{\partial}{\partial t}\int|Rm|^p
\\&=&p\{-\int\chi^2|\nabla Rm|^{2}|Rm|^{p-2}
+\int|Rm|^{p-2}<\chi\nabla\chi\ast\nabla Rm,Rm>
\\&+&\int| Rm|^{p-2}<\chi^2Rm\ast
Rm,Rm>
\\&+&\int| Rm|^{p-2}<\nabla\chi\ast\nabla\chi\ast
Rm,Rm> +\int| Rm|^{p-2}<\chi\nabla^2\chi\ast Rm,Rm> \}
\end{eqnarray*}
Apply integration by parts to last term yields:
\begin{eqnarray*} &
&\int|Rm|^{p-2}<\chi\nabla^2\chi\ast Rm,Rm>
\\&=&
-(p-2)\int|Rm|^{p-4}<\nabla Rm,Rm><\chi\nabla\chi\ast Rm,Rm>
\\&+&
\int|Rm|^{p-2}<\chi\nabla\chi\ast\nabla Rm,Rm>
\\&+&
\int|Rm|^{p-2}<\nabla\chi\ast\nabla\chi\ast Rm,Rm>
\\&+&
\int|Rm|^{p-2}<\chi\nabla\chi\ast Rm,\nabla Rm>
\\&\leq&C\int|Rm|^{p}+\frac{1}{16}\int\chi^2|\nabla Rm|^{2}|Rm|^{p-2}
\end{eqnarray*}
In which we used:
\begin{eqnarray*} &
&\int|Rm|^{p-2}<\chi\nabla\chi\ast\nabla Rm,Rm>
\\&\leq&C\int|Rm|^{p}+\frac{1}{16}\int\chi^2|\nabla Rm|^{2}|Rm|^{p-2}
\end{eqnarray*}
Then:
$$\frac{\partial}{\partial t}\int|Rm|^p\leq C\int|Rm|^p$$
Thus $|Rm|_p$ is uniformly bounded on $[0,\ T)$.

\end{proof} Then we are ready to bound the integral of the first derivative
of curvature:
\begin{lem}
\label{first derivative of curvature and hessian of cutoff fuction}
$\forall \ p> 100$. If $g(t), t\in[0,T)$ is a smooth solution to
$LRF$ on M assume further on $[0,T)$:
\begin{itemize}
\item $|Rm|_{q}$ is  uniformly  bounded,$ \forall \ q>100$.
\item $|\chi^2Rm|_{\infty} $ and $|\nabla\chi|_{\infty}$ are uniformly
bounded.
\end{itemize}
  Then we have: $|\nabla
(\chi^2Rm)|_p$ and $|\nabla^2\chi|_{p}$ are uniformly bounded on
$[0,T)$.
\end{lem}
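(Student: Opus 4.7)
The plan is to derive coupled differential inequalities for $y_1(t) := \|\nabla(\chi^2 Rm)\|_{L^p}$ and $y_2(t) := \|\nabla^2 \chi\|_{L^p}$, close the system, and conclude via Lemma \ref{nontrivial ODE}. The assumed pointwise bounds on $\chi^2 Rm$ and $\nabla\chi$, together with the uniform $L^q$ bounds on $|Rm|$ for every large $q$, will make all ``lower order'' factors harmless and reduce every term in the evolution to a constant multiple of $y_1$, $y_2$, or an integrated version.

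First I would compute $\frac{d}{dt}\int |\nabla(\chi^2 Rm)|^p \, d\mu_t$. Applying Lemma \ref{cutoff function times curvature} with $m=1$, taking the inner product against $p|\nabla(\chi^2 Rm)|^{p-2}\nabla(\chi^2 Rm)$, accounting for the evolution of the volume form (which contributes at most a bounded multiple of the integrand since $|\chi^2 R|_\infty \le C$), and integrating by parts on the parabolic term $\chi^2 \Delta \nabla(\chi^2 Rm)$, one extracts a good negative ``Bochner'' piece $-c\int \chi^2|\nabla^2(\chi^2 Rm)|^2 |\nabla(\chi^2 Rm)|^{p-2}$. The remaining terms split into three kinds: purely algebraic products of $Rm$ and $\chi^2 Rm$ with $\nabla(\chi^2 Rm)$, bounded by $C\int |\nabla(\chi^2 Rm)|^p + C$ via Hölder and the assumed $L^q$ bound on $Rm$; the $\nabla^2(\chi^2 Rm)$ term coming from the index $(i,j,k)=(1,0,2)$ in Lemma \ref{cutoff function times curvature}, which is absorbed into the Bochner piece via Cauchy--Schwarz with a small coefficient; and finally the sole term involving $\nabla^2 \chi$, whose Hölder estimate contributes $C\|\nabla^2\chi\|_{L^p}\cdot \|\nabla(\chi^2 Rm)\|_{L^p}^{p-1}$. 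Dividing out by $p\|\nabla(\chi^2 Rm)\|_{L^p}^{p-1}$ (where, if $y_1<1$, one uses the trivial bound instead) yields
\[
\tfrac{d}{dt} y_1 \;\leq\; C\bigl(1 + y_1 + y_2\bigr).
\]

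For $y_2$ I would apply Lemma \ref{cutoff function} with $m=2$, which gives pointwise $\partial_t|\nabla^2 \chi| \leq C\bigl(|\chi^2 Ric|\,|\nabla^2\chi| + |\nabla(\chi^2 Ric)|\,|\nabla\chi|\bigr)$. Taking $L^p$ norms, using the $L^\infty$ bounds on $\chi^2 Rm$ and $\nabla \chi$, and applying Gronwall in integrated form gives $y_2(t) \leq C\bigl(1+\int_0^t y_1(s)\,ds\bigr)$ on $[0,T)$. Plugging this estimate back into the $y_1$ inequality produces
\[
y_1'(t) \;\leq\; C + Cy_1(t) + C\int_0^t y_1(s)\,ds,
\]
which is precisely the hypothesis of Lemma \ref{nontrivial ODE}; hence $y_1$ is uniformly bounded on $[0,T)$, and then $y_2$ is uniformly bounded as well.

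The main technical obstacle will be carefully bookkeeping the integration by parts on the Laplacian term so that the Bochner-good piece is strong enough to absorb all occurrences of $\nabla^2(\chi^2 Rm)$ via Cauchy--Schwarz with room to spare: when one fully expands the $\nabla$ of $\chi^2\Delta\nabla(\chi^2 Rm)$ several cross terms of the form $\nabla\chi^2 \ast \nabla^2(\chi^2 Rm)$ appear, and each must be absorbed with an $\epsilon$-factor, leaving a strictly positive residual Bochner term. A secondary nuisance is verifying that the Hölder exponents balance in the cubic expressions $\int(\text{bounded})\cdot |Rm|\cdot |\nabla(\chi^2 Rm)|^{p-1}$, but this is routine since $\|Rm\|_{L^q}$ is assumed bounded for arbitrarily large $q$, so one can always afford one extra Hölder factor.
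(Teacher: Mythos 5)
Your overall strategy---deriving coupled inequalities for $y_1=|\nabla(\chi^2 Rm)|_p$ and $y_2=|\nabla^2\chi|_p$, controlling $y_2$ by integrating Lemma \ref{cutoff function} via Lemma \ref{ODE2}, and closing with Lemma \ref{nontrivial ODE}---is indeed the skeleton of the paper's argument. But there is a concrete gap in your claimed inequality $y_1'\leq C(1+y_1+y_2)$, which is not what the estimate actually produces.

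When you expand $\frac{d}{dt}\int|\nabla(\chi^2 Rm)|^p$ via Lemma \ref{cutoff function times curvature} with $m=1$ and integrate by parts on the $\chi\ast\nabla^3\chi\ast(\chi^2 Rm)$ contribution ($I_1$ in the paper's notation), the Cauchy--Schwarz absorption against the Bochner piece leaves a residual that is \emph{quadratic} in $\nabla^2\chi$:
$$C\int |\nabla^2\chi|^2\,|\chi^2 Ric|^2\, |\nabla(\chi^2 Rm)|^{p-2}\ \leq\ C\, |\nabla^2\chi|_p^2\, |\nabla(\chi^2 Rm)|_p^{\,p-2}\ =\ C\,y_2^2\,y_1^{p-2}.$$
So the correct form of the estimate is $\frac{d}{dt}(y_1^p)\leq C\bigl(y_1^p+y_1^{p-1}y_2+y_1^{p-2}y_2^2+y_1^{p-1}\bigr)$, and dividing by $p\,y_1^{p-1}$ gives $y_1'\leq C\bigl(1+y_1+y_2+y_2^2/y_1\bigr)$, not $C(1+y_1+y_2)$. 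Substituting $y_2\leq C(1+\int_0^t y_1)$ then produces a $(\int_0^t y_1)^2$ term, which does not fit the hypothesis of Lemma \ref{nontrivial ODE}, so your closure fails.

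The paper's fix is to track $y_1^2$ rather than $y_1$. From the same differential inequality one obtains, after Young's inequality, $\frac{d}{dt}(y_1^2)\leq C\bigl(1+y_1^2+y_2^2\bigr)$; then Cauchy--Schwarz gives $y_2^2\leq C\bigl(1+(\int_0^t y_1)^2\bigr)\leq C\bigl(1+T\int_0^t y_1^2\bigr)$, so that $z:=y_1^2$ satisfies $z'\leq C\bigl(1+z+\int_0^t z\,ds\bigr)$, which is exactly the hypothesis of Lemma \ref{nontrivial ODE}. You should either switch to the $y_1^2$ normalization or otherwise explain how the quadratic $\nabla^2\chi$ contribution is eliminated; as written the step is incorrect.
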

\begin{rmk} The reason why we should do this lower order estimate first is $|\nabla^2\chi|_q$ and $|\nabla(\chi^2 Rm)|_q$ are interplaying
along the flow, and both of them are crucial for us to go on with
induction!
\end{rmk}

\begin{proof} First  from  lemma \ref{ODE2} we have:
 $$|\nabla^2\chi|_{p}\leq C[|\nabla^2\chi|_{p}(0)+\int^t_0|\nabla(\chi^2Ric)|_pds]$$
So we could see:
\begin{eqnarray*}& &
\\& & \frac{\partial \int|\nabla
(\chi^2Rm)|^p}{\partial t} \leq p\sum^{9}_{i=0}I_i.
\end{eqnarray*}

\begin{itemize}
\item
$I_0=-\int\chi^2|\nabla^2(\chi^2Rm)|^{2}|\nabla(\chi^2Rm)|^{p-2}$
\item $I_1=\int|\nabla (\chi^2Rm)|^{p-2}<\nabla
(\chi^2Rm),\chi\ast\nabla^3\chi\ast (\chi^2Rm)>$
\item $I_2=\int|\nabla (\chi^2Rm)|^{p-2}<\nabla
(\chi^2Rm),\chi\ast\nabla^2\chi\ast \nabla (\chi^2Rm)>$
\item $I_3=\int|\nabla (\chi^2Rm)|^{p-2}<\nabla
(\chi^2Rm),\nabla\chi\ast\nabla\chi\ast \nabla (\chi^2Rm)>$
\item $I_4=\int|\nabla (\chi^2Rm)|^{p-2}<\nabla (\chi^2Rm),
\nabla\chi\ast\nabla^2\chi\ast (\chi^2Rm)>$
\item $I_5=\int|\nabla (\chi^2Rm)|^{p-2}<\nabla
(\chi^2Rm), \chi\nabla\chi\ast \chi^2Rm\ast Rm>$
\item $I_6=\int|\nabla
(\chi^2Rm)|^{p-2}<\nabla (\chi^2Rm), \chi^2Rm\ast\nabla(\chi^2Rm)>$
\item $I_7=\int|\nabla
(\chi^2Rm)|^{p-2}<\nabla (\chi^2Rm), \chi\nabla\chi\ast
\nabla^2(\chi^2Rm)>$
\end{itemize} It's trivial to see that:
$$  I_5\leq C|\nabla(\chi^2Rm)|^{p-1}_{p}$$
$$I_3,\ I_6 \ \leq C|\nabla(\chi^2Rm)|^{p}_{p}$$
$$  I_4\leq C|\nabla(\chi^2Rm)|^{p-1}_{p}|\nabla^2\chi|_{p}$$ The biggest
trouble come from $I_1,\ I_2$ because they involve second and third
derivatives of $\chi$. First we estimate  $I_2$:
\begin{eqnarray*}
&  I_2& =\ -\int|\nabla (\chi^2Rm)|^{p-2}<\nabla^2
(\chi^2Rm),\chi\nabla\chi\ast \nabla (\chi^2Ric)>
\\&-&\int|\nabla (\chi^2Rm)|^{p-2}<\nabla
(\chi^2Rm),\chi\nabla\chi\ast \nabla^2 (\chi^2Ric)>
\\&-&\int|\nabla (\chi^2Rm)|^{p-2}<\nabla
(\chi^2Rm),\nabla\chi\ast\nabla\chi\ast \nabla (\chi^2Ric)>
\\&-(p-2)&\int|\nabla
(\chi^2Rm)|^{p-4}<\nabla^2 (\chi^2Rm),\nabla (\chi^2 Rm)><\nabla
(\chi^2Rm),\chi\nabla\chi\ast \nabla(\chi^2Ric)>
\end{eqnarray*}
Then we estimate:
\begin{eqnarray*}
& &|-\int|\nabla (\chi^2Rm)|^{p-2}<\nabla^2
(\chi^2Rm),\chi\nabla\chi\ast \nabla(\chi^2Ric)>|
\\&\leq&\frac{1}{16}\int\chi^2|\nabla^2 (\chi^2Rm)|^2|\nabla
(\chi^2Rm)|^{p-2} + C\int|\nabla(\chi^2Rm)|^{p}
\end{eqnarray*}
Estimate the other terms in the same way  we have:
\begin{eqnarray*}
& &I_2\  \leq\ \frac{1}{8}\int\chi^2|\nabla^2 (\chi^2Rm)|^2|\nabla
(\chi^2Rm)|^{p-2} +\ C\int|\nabla{(\chi^2Rm)}|^{p}
\end{eqnarray*}
Then we estimate $I_1$:
\begin{eqnarray*}
\\&I_1=&-\int|\nabla
(\chi^2Rm)|^{p-2}<\nabla^2 (\chi^2Rm),\chi\nabla^2\chi\ast
(\chi^2Ric)>
\\&-(p-2)&\int|\nabla
(\chi^2Rm)|^{p-4}<\nabla^2 (\chi^2Rm),\nabla (\chi^2 Rm)><\nabla
(\chi^2Rm),\chi\nabla^2\chi\ast (\chi^2Ric)>
\\&-&\int|\nabla
(\chi^2Rm)|^{p-2}<\nabla (\chi^2Rm),\nabla\chi\ast\nabla^2\chi\ast
(\chi^2Ric)>
\\&-&\int|\nabla
(\chi^2Rm)|^{p-2}<\nabla (\chi^2Rm), \chi\nabla^2\chi\ast
\nabla(\chi^2Ric)>
\end{eqnarray*} Notice that:
\begin{eqnarray*}
& &|-\int|\nabla (\chi^2Rm)|^{p-2}<\nabla^2
(\chi^2Rm),\chi\nabla^2\chi\ast (\chi^2Ric)>|
\\&\leq&\frac{1}{16}\int\chi^2|\nabla^2 (\chi^2Rm)|^2|\nabla
(\chi^2Rm)|^{p-2} + C\int|\nabla^2\chi|^2| (\chi^2Ric)|^2|\nabla
(\chi^2Rm)|^{p-2}
\end{eqnarray*}
and also \begin{eqnarray*} & &(p-2)\int|\nabla
(\chi^2Rm)|^{p-4}<\nabla^2 (\chi^2Rm),\nabla (\chi^2 Rm)><\nabla
(\chi^2Rm),\chi\nabla^2\chi\ast (\chi^2Ric)>
\\&\leq&\frac{1}{16}\int\chi^2|\nabla^2 (\chi^2Rm)|^2|\nabla
(\chi^2Rm)|^{p-2} +\ C|\nabla^2\chi|_{p}^2|\nabla
(\chi^2Rm)|_{p}^{p-2}
\end{eqnarray*}
The other two terms in $I_1$ are the same as those in $I_2,I_4$. Thus we get:
\begin{eqnarray*}
& &I_1
\\&\leq&\frac{1}{16}\int\chi^2|\nabla^2 (\chi^2Rm)|^2|\nabla
(\chi^2Rm)|^{p-2} \\&+& C|\nabla^2\chi|_{p}^2|\nabla
(\chi^2Rm)|_{p}^{p-2}
\\&+& C|\nabla^2\chi|_{p}|\nabla
(\chi^2Rm)|_{p}^{p-1}+C|\nabla (\chi^2Rm)|_{p}^{p}
\end{eqnarray*}

The estimate of $I_7$ is the same as the technipue of one of the
terms in $I_2$. \\
Then the conclusion is:
\begin{eqnarray*}
& & \frac{\partial \int|\nabla (\chi^2Rm)|^p}{\partial t}
\\&=&\{C|\nabla (\chi^2Rm)|^{p-2}_{p}[|\nabla^2\chi|_{p}(0)+\int^t_0|\nabla(\chi^2Ric)|_pds]^2
\\&+&C|\nabla (\chi^2Rm)|^{p-1}_{p}[|\nabla^2\chi|_{p}(0)+\int^t_0|\nabla(\chi^2Ric)|_pds]
\\&+&C|\nabla(\chi^2Rm)|^{p-1}_{p}
+C\int|\nabla(\chi^2Rm)|^{p} \ \
\end{eqnarray*}
Then we use Schwarz inequality and Holder Inequality  to see:
\begin{eqnarray*}
& & \frac{d}{\partial t} |\nabla (\chi^2Rm)|^{2}_p \leq
C\int_0^t|\nabla (\chi^2Rm)|^{2}_pds +C|\nabla (\chi^2Rm)|^{2}_p+1
\end{eqnarray*}
Then from lemma \ref{nontrivial ODE} we see that $|\nabla
(\chi^2Rm)|^{2}_p$ is uniformly bounded on $[0,T)$, thus
$|\nabla^2\chi|_{p}$ is also uniformly bounded on $[0,T)$.

\end{proof}

\begin{prop}
\label{bounds on  derivative of Rm} If $g(t), t\in[0,T)$ is a smooth
solution to $LRF$ on M. Assume on $[0,T)$ we have:
\begin{enumerate}
\item $\forall \ 0\leq i\leq m+1,|\nabla^{i} \chi|_{q}$ are uniformly
bounded, $\forall \ q>100$
\item  $\forall \ q>100,\ |Rm|_q$ is uniformly bounded.
\item $|\chi^2Rm|_{\infty} $ and $|\nabla\chi|_{\infty}$ are uniformly
bounded.
\end{enumerate}

  Then  $\forall \ p>100, \ |\nabla^m
Rm|_p$ is uniformly bounded on $[0,T)$.
\end{prop}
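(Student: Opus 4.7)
The plan is to prove this by induction on $m \geq 0$, following the same strategy as in Lemma \ref{first derivative of curvature and hessian of cutoff fuction} but now at the $m$-th order using Lemma \ref{curvature}. The base case $m = 0$ is exactly hypothesis (2). For the inductive step, I assume the conclusion holds for all orders strictly less than $m$, i.e.\ $|\nabla^j Rm|_q$ is uniformly bounded on $[0,T)$ for every $0 \leq j \leq m-1$ and every $q > 100$, and aim to derive the same for $|\nabla^m Rm|_p$.

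I would compute $\frac{d}{dt}\int |\nabla^m Rm|^p$ using Lemma \ref{curvature}, then integrate the $\chi^2 \Delta \nabla^m Rm$ contribution by parts to extract the good second-order term $-c(p) \int \chi^2 |\nabla^{m+1} Rm|^2 |\nabla^m Rm|^{p-2}$, modulo straightforward lower-order corrections involving $\chi \nabla\chi$. The remaining pieces from Lemma \ref{curvature} fall into three categories. First, the quadratic-in-curvature terms $\sum_{i+j+k=m} \nabla^i(\chi^2) \ast \nabla^j Rm \ast \nabla^k Rm$ never have $j$ or $k$ exceed $m$, and by generalised H\"older with exponents chosen large (legitimate because hypothesis (2) bounds $|Rm|_q$ and the inductive hypothesis bounds $|\nabla^j Rm|_q$ for $j \leq m-1$, both in every $L^q$ with $q > 100$) they are dominated by $C(1 + |\nabla^m Rm|_p^p)$. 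Second, among the linear-in-curvature terms $\sum_{i+j+k=m+2,\; k \leq m+1} \nabla^i\chi \ast \nabla^j\chi \ast \nabla^k Rm$, the case $k = m+1$ is absorbed into the good second-order term by Cauchy--Schwarz, and the cases in which $\max(i,j) \leq m+1$ are controlled directly via hypothesis (1) and H\"older.

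The main obstacle is the residual linear-in-curvature terms with $\max(i,j) = m+2$, which force $\min(i,j) = 0$ and $k = 0$, that is, contributions of the form $\chi \cdot \nabla^{m+2}\chi \cdot Rm$. Since hypothesis (1) only supplies control on $|\nabla^i \chi|_q$ up to $i = m+1$, I would integrate by parts once to transfer the extra derivative off $\chi$, producing four kinds of terms: one with $\nabla^{m+1}\chi \cdot \nabla^{m+1} Rm \cdot Rm$, absorbed into the good term by Cauchy--Schwarz; and three terms pairing $\nabla^{m+1}\chi$ with $\nabla\chi \cdot Rm$, with $\chi \cdot \nabla Rm$, or with a derivative falling on $|\nabla^m Rm|^{p-2} \nabla^m Rm$, each of which is bounded by H\"older using quantities already controlled together with $|\nabla^m Rm|_p^{p-1}$.

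Combining all contributions produces a differential inequality of the form
\[ \frac{d}{dt} |\nabla^m Rm|_p^p \;\leq\; C_1 + C_2\, |\nabla^m Rm|_p^p, \]
from which Lemma \ref{ODE2} yields the uniform bound on $[0,T)$. The delicate part is the bookkeeping of the integration by parts so that every occurrence of the uncontrolled $\nabla^{m+2}\chi$ is removed; once this is done the argument is a systematic higher-order analogue of Lemma \ref{first derivative of curvature and hessian of cutoff fuction}.
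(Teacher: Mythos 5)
Your proposal follows the same induction on $m$, uses Lemma \ref{curvature} to compute $\partial_t \int|\nabla^m Rm|^p$, extracts the good term $-\int\chi^2|\nabla^{m+1}Rm|^2|\nabla^m Rm|^{p-2}$, isolates the $\chi\nabla^{m+2}\chi\ast Rm$ contribution as the critical piece, and removes the extra derivative by one integration by parts followed by Cauchy--Schwarz and H\"older --- which is exactly the paper's argument. The only cosmetic difference is that you state the final ODE inequality in the form $\frac{d}{dt}|\nabla^m Rm|_p^p\leq C_1+C_2|\nabla^m Rm|_p^p$ and invoke Lemma \ref{ODE2} explicitly, whereas the paper leaves the term $C|\nabla^m Rm|_p^{p-1}$ unabsorbed and concludes implicitly; the two are equivalent by Young's inequality.
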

\begin{proof}
We argue by induction. When $m=0$ the conclusion  is right. For all
$m\geq 1$, assume that  $\forall \ 0\leq k\leq m-1, \ p>100$:
$$\int|\nabla^k
Rm|^{p}$$ is uniformly bounded. Then we compute:
\begin{eqnarray*}\displaystyle
& & \frac{\partial \int|\nabla^m Rm|^p}{\partial t}
\\&\leq&C\{\sum_{i+j+k=m+2,\ i,\ j,\ k\leq m+1}\int|\nabla^i\chi||\nabla^j\chi||\nabla^k(Rm)||\nabla^m Rm|^{p-1}
\\&+&\sum_{i+j+k=m}\int|\nabla^i(\chi^2)||\nabla^j Rm||\nabla^k(Rm)||\nabla^m Rm|^{p-1}
\\&+&\int|\nabla^m Rm|^{p-2}<\nabla^m
Rm,\chi\nabla^{m+2}\chi \ast Rm>\}
\\&-&p\int\chi^2|\nabla^{m+1}Rm|^{2}|\nabla^{m}Rm|^{p-2}
\end{eqnarray*}
Apply integration by parts to the  term $\int|\nabla^m
Rm|^{p-2}<\nabla^m Rm,\chi\nabla^{m+2}\chi \ast Rm>$ and use the
same technique as we did in
 Lemma \ref{first derivative of curvature and hessian of cutoff
 fuction} (integration
by parts, Schwarz Inequality and holder inequality) as well as the
induction hypothesis to see:

\begin{eqnarray*}& & \frac{\partial \int|\nabla^m
Rm|^p}{\partial
t}+\frac{p}{2}\int\chi^2|\nabla^{m+1}Rm|^{2}|\nabla^m Rm|^{p-2}
\\&\leq& C\int|\nabla^m
Rm|^p +C|\nabla^m Rm|_{p}^{p-1}+C
\end{eqnarray*}
 then $ \int|\nabla^m Rm|^{p}$ is uniformly bounded on $
[0,T)$.
\end{proof}
\begin{prop}
\label{Extension proposition }. If $g(t), t\in[0,T)$ is a smooth
solution to $LRF$ on M and assume on $[0,T)$ the following hold:
\begin{itemize}
\item $\forall \ q>100,\ |Rm|_{q},\ |\nabla^2\chi|_q$ and $|\nabla(\chi^2Rm)|_q$ are  uniformly
bounded.
\item $ \forall \ q>100,\ |\chi^2Rm|_{\infty} $ and $|\nabla\chi|_{\infty}$ are uniformly
bounded.
\item The Sobolev constant A is uniformly bounded.
\end{itemize}
  Then $\forall \ m\geq 0,\ |\nabla^m(\chi^2Rm)|_{\infty}$ is uniformly bounded on
  $[0,T)$.\end{prop}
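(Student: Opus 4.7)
The plan is an induction on $m\geq 0$ establishing, for every $p>100$, uniform bounds on the triple
\[
\bigl(\,|\nabla^m(\chi^2 Rm)|_p,\ |\nabla^{m+1}\chi|_p,\ |\nabla^m Rm|_p\,\bigr)
\]
on $[0,T)$. Once all $L^p$ norms of all derivatives of $\chi^2 Rm$ are uniformly bounded for every large $p$, the assumed uniform bound on the Sobolev constant $A$ (together with the compact support of $\chi$) lets one iterate the Sobolev inequality, or apply a Morrey-type embedding at some $p>n$, to upgrade these $L^p$ bounds to the desired $|\nabla^m(\chi^2 Rm)|_\infty<\infty$. The base cases $m=0,1$ are supplied by the hypotheses of the proposition together with Lemma \ref{curvature integral} and Lemma \ref{first derivative of curvature and hessian of cutoff fuction}.

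For the inductive step, assume the triple is uniformly bounded for every $k\leq m-1$ and every $p>100$. Two ingredients drive the step. First, integrating the pointwise estimate of Lemma \ref{cutoff function} via Lemma \ref{ODE2} produces
\[
|\nabla^{m+1}\chi|_p(t)\ \leq\ C\Bigl[\,|\nabla^{m+1}\chi|_p(0)+\int_0^t|\nabla^m(\chi^2 Ric)|_p\,ds\,\Bigr],
\]
modulo terms bounded by the induction hypothesis; thus the highest unknown derivative of $\chi$ is pinned to a time-integral of $|\nabla^m(\chi^2 Rm)|_p$. Second, differentiating $\int|\nabla^m(\chi^2 Rm)|^p$ through Lemma \ref{cutoff function times curvature} supplies a favourable dissipation term $-p\int\chi^2|\nabla^{m+1}(\chi^2 Rm)|^2|\nabla^m(\chi^2 Rm)|^{p-2}$, together with error terms in which every factor except a single $\nabla^{m+2}\chi\ast(\chi^2 Rm)$ contribution is already controlled by the induction hypothesis.

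The $\nabla^{m+2}\chi$ contribution is the main obstacle; it is tamed by integrating by parts once to transfer a derivative onto $|\nabla^m(\chi^2 Rm)|^{p-2}\nabla^m(\chi^2 Rm)$. This produces factors of $\nabla^{m+1}\chi$ paired either with $\nabla^{m+1}(\chi^2 Rm)$, absorbed into the dissipation term by Cauchy--Schwarz, or with $\nabla^m(\chi^2 Rm)$, absorbed via H\"older and the integrated bound on $|\nabla^{m+1}\chi|_p$. Substituting everything and setting $y(t)=|\nabla^m(\chi^2 Rm)|_p^2$ yields a differential inequality of the shape
\[
\frac{dy}{dt}\ \leq\ a\int_0^t y(s)\,ds+by+1,
\]
to which Lemma \ref{nontrivial ODE} applies, providing a uniform bound on $y$. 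The remaining two members of the triple then follow: $|\nabla^{m+1}\chi|_p$ from its integrated bound above, and $|\nabla^m Rm|_p$ by invoking Proposition \ref{bounds on derivative of Rm} at order $m$, now legitimate because $|\nabla^i\chi|_q$ has been controlled for all $0\leq i\leq m+1$. The delicate point throughout is to arrange the top-order integration by parts so that one derivative is absorbed by the heat dissipation term and the other by the ODE coupling to $|\nabla^{m+1}\chi|_p$, with no residue that falls outside the induction hypothesis.
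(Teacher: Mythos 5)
Your proposal is correct and follows essentially the same route as the paper: an induction on $m$ bounding $L^p$ norms of $\nabla^m(\chi^2 Rm)$, using Lemma \ref{cutoff function times curvature} for the evolution inequality, integration by parts on the $\nabla^{m+2}\chi$ term with Cauchy--Schwarz absorption into the dissipation, and closure via the Volterra-type inequality of Lemma \ref{ODE2} and Lemma \ref{nontrivial ODE}, followed by Sobolev embedding to pass to $L^\infty$. Your write-up is in fact somewhat more explicit than the paper's about the coupling between $|\nabla^{m+1}\chi|_p$ and $|\nabla^m(\chi^2 Rm)|_p$, which the paper leaves implicit by appealing to the ``same technique'' used in Lemma \ref{first derivative of curvature and hessian of cutoff fuction}.
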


\begin{proof} Since   Sobolev constant A is assumed to be uniformly
  bounded on $[0,T)$, then it suffices to prove:
   $ \forall \ p>100,\ m\geq 2,\ |\nabla^m(\chi^2Rm)|_{p}
  $ is uniformly bounded on
  $[0,T)$.  We will prove this by induction. \\Assume that : $|\nabla^l
(\chi^2Rm)|_p$ is uniformly bounded on $[0,T),\forall \ p>100,\
l\leq m-1$, we should show  $|\nabla^m (\chi^2Rm)|_p$ is uniformly
bounded on
$[0,T),\forall \ p>100$.\\
 The induction hypothesis, Lemma \ref{cutoff function}, Proposition \ref{bounds on  derivative of Rm} and bound of Sobolev Constant  tells us that:
\begin{itemize}
\item $|\nabla^l (\chi^2Rm)|_{\infty}$
is uniformly bounded on \ $[0,T),\  l\leq m-2$.
\item $|\nabla^k \chi|_p$ is uniformly  bounded on  $[0,T)$,
$\forall\ p>100,\  k\leq m.$
\item $|\nabla^j \chi|_{\infty}$ is uniformly  bounded on  $[0,T)$,
$j\leq m-1.$
\item $|\nabla^i
Rm|_p$ is uniformly  bounded on $ [0,T),\ p>100,\ i\leq m-1.$
\end{itemize}

We compute $\forall \ p$:
\begin{eqnarray*}\displaystyle
& & \frac{\partial}{\partial t} \int|\nabla^m (\chi^2Rm)|^p
\\&\leq&
-\int\chi^2|\nabla^{m+1}(\chi^2Rm)|^{2}|\nabla^{m}(\chi^2Rm)|^{p-2}
\\&+&\sum_{i+j+k=m+2,\ k\leq m+1}\int|\nabla^m
(\chi^2Rm)|^{p-2}<\nabla^m(\chi^2Rm),\nabla^i\chi\ast\nabla^j\chi\ast
\nabla^k(\chi^2Rm)>
\\&+&\sum_{i+j+k=m,\ k\leq m-1}\int|\nabla^m
(\chi^2Rm)|^{p-2}<\nabla^m(\chi^2Rm),\nabla^i(\chi^2)\ast
\nabla^j(\chi^2Rm)\ast\nabla^kRm>
\\&+&\int|\nabla^m
(\chi^2Rm)|^{p-2}<\nabla^m(\chi^2Rm),\chi^2Rm\ast\nabla^m(\chi^2Rm)>
\\&\leq&1+\ C\int|\nabla^m (\chi^2Rm)|^p +\Gamma_1+\Gamma_2+\Gamma_3+\Gamma_4
\end{eqnarray*}
In which:
\begin{eqnarray*}
& & \Gamma_1 =\int|\nabla^m
(\chi^2Rm)|^{p-2}<\nabla^m(\chi^2Rm),\chi\nabla^{m+2}\chi\ast
\chi^2Rm> \\& &\Gamma_2 =\int|\nabla^m
(\chi^2Rm)|^{p-2}<\nabla^m(\chi^2Rm),\nabla\chi\ast\nabla^{m+1}\chi\ast
\chi^2Rm> \\& &\Gamma_3 =\int|\nabla^m
(\chi^2Rm)|^{p-2}<\nabla^m(\chi^2Rm),\chi\ast\nabla^{m+1}\chi\ast
\nabla(\chi^2Rm)>\\& & \Gamma_4 =\int|\nabla^m
(\chi^2Rm)|^{p-2}<\nabla^m(\chi^2Rm),\chi\nabla\chi\ast
\nabla^{m+1}(\chi^2Rm)>
\end{eqnarray*}
Notice $\Gamma_4\leq  C\int|\nabla^m (\chi^2Rm)|^p+
\frac{1}{16}\int\chi^2|\nabla^{m+1}(\chi^2Rm)|^{2}|\nabla^{m}(\chi^2Rm)|^{p-2}$.\\
When we have applied  integration by parts to $\Gamma_1, \Gamma_2,
\Gamma_3$ we can control the terms in the way we control $\Gamma_4$
except the following term:
$$\int|\nabla^m
(\chi^2Rm)|^{p-2}<\nabla^m(\chi^2Rm),\nabla^2\chi\ast\nabla^{m}\chi\ast
\chi^2Rm> $$ But we have assumed $ |\nabla^2\chi|_q$ is  uniformly
bounded on $[0,T),\ \forall \ q>100$. Then we are done! Actually
this is why we should bound $|\nabla^2\chi|_q$ first : If $m=2$,
then the above term contain $|\nabla^2\chi|^2$ then we should deal
with the interplay between  $|\nabla^2\chi|_q$ and $|\nabla(\chi^2
Rm)|_q$.
\end{proof}
\begin{proof}{of Theorem \ref{smooth extension of LRF}:}
It is directly implied by Lemma \ref{curvature integral}, \ref{first
derivative of curvature and hessian of cutoff fuction},\ \ref{bounds
on derivative of Rm}, \ Proposition \ref{estimate for cutoff
function times curvature}, \ref{weak extension} and \ref{Extension
proposition }.\end{proof}
\section{Proof of the Smooth Extension Part of Theorem \ref{T1}}

We only compute the gap constant in Assumption 2 of Theorem \ref{T2}
and apply Theorem \ref{smooth extension of LRF}, the rest are the
same as the computation from page 98 to page 102 of \cite{deane}. \\
Notice that:
\begin{itemize}
\item $|<Rm^2 +Rm\times Rm,Rm>|\leq \ 5|Rm|^3$
\item $|R|\leq \sqrt{\frac{n(n-1)}{2}}|Rm|$\end{itemize}
In which \ $Rm^2 +Rm\times Rm=\ R_{ijpq}R_{pqkl}+2R_{ipkq}R_{jplq}
-2R_{iplq}R_{jpkq}$. Then we compute:
\begin{eqnarray*}
\\& &\frac{\partial}{\partial t}\int|Rm|^{\frac{n}{2}}
\\&\leq&\frac{n}{2}\{6.5\int\chi^{2}|Rm|^{\frac{n}{2}+1}-\frac{16}{n^2}
\int
|\nabla(\chi|Rm|^{\frac{n}{4}})|^2\}+C(n)|\nabla\chi|^2_{\infty}\int
|Rm|^{\frac{n}{2}}
\\&+& C(n)\int|\nabla\chi|_{\infty}|\nabla(\chi|Rm|^{\frac{n}{4}})|\
|Rm|^{\frac{n}{4}}
\\&\leq&\frac{n}{2}\{6.5\int(\chi|Rm|^{\frac{n}{4}})^{2}|Rm|-\frac{15.5}{n^2}
\int
|\nabla(\chi|Rm|^{\frac{n}{4}})|^2\}+C(n)|\nabla\chi|^2_{\infty}\int
|Rm|^{\frac{n}{2}}
\\&\leq&\frac{n}{2}\{6.5[|Rm|_{\frac{n}{2}}\int(\chi|Rm|^{\frac{n}{4}})^{\frac{2n}{n-2}}]^{\frac{n-2}{n}}-\frac{15}{A^2n^2}\int(\chi|Rm|^{\frac{n}{4}})^{\frac{2n}{n-2}}]^{\frac{n-2}{n}}\}+C(n)|\nabla\chi|^2_{\infty}\int |Rm|^{\frac{n}{2}}
\\&-& \frac{1}{4n}\int
|\nabla(\chi|Rm|^{\frac{n}{4}})|^2
\end{eqnarray*}

   From  the same analysis  from page 98 to page 102 of \cite{deane}, we see if we assume at $t=0$ that:
   $$|Rm|_{\frac{n}{2}}|_0\leq
  \frac{2}{n^2A_0^2}$$
  and shrink the constant $C(n,p)$ in Theorem
 \ref{T1} to be sufficiently small, we have:
$$\frac{\partial}{\partial t}\int |Rm|^{\frac{n}{2}}+\frac{1}{4n}\int
|\nabla(\chi|Rm|^{\frac{n}{4}})|^2\leq
C(n)|\nabla\chi|^2_{\infty}\int |Rm|^{\frac{n}{2}}$$ and
$$n^2A^2|Rm|_{\frac{n}{2}}\leq \frac{15}{6.5}, \ \textrm{in} \ [0,T)$$
provided $$T\leq
C(n,p)min[\frac{1}{|\nabla\chi|^2_{\infty}|_0},A_0^{-\frac{2n}{2p-n}}K^{-\frac{2p}{2p-n}}]$$
 Furthermore, in order to fulfill   the  step of interpolation in the  Moser Iteration    from page 98 to page 102 of \cite{deane}, we should compute:
 \begin{eqnarray*}
\\& &\frac{\partial}{\partial t}\int\chi^2|Rm|^{1+\frac{n}{2}}
\\&\leq&(1+\frac{n}{2})\{6.5|Rm|_{\frac{n}{2}}(\int(\chi^2|Rm|^{\frac{n}{4}+\frac{1}{2}})^{\frac{2n}{n-2}})^{\frac{n-2}{n}}-\frac{15}{(n+2)^2}\int|\nabla(\chi^2|Rm|^{\frac{n}{4}+\frac{1}{2}}|^{2}\}
\\&+& C(n)|\nabla\chi|^2_{\infty}\int |Rm|^{\frac{n}{2}}
\end{eqnarray*}
if we assume at $t=0$ that:
   $$|Rm|_{\frac{n}{2}}|_0\leq
  \frac{1}{2n^2A_0^2}$$
  and shrink the constant $C(n,p)$ in Theorem
 \ref{T1} to be even smaller, we have:
$$\frac{\partial}{\partial t}\int\chi^2|Rm|^{\frac{n}{2}+1}\leq
C(n)|\nabla\chi|^2_{\infty}\int\chi^2|Rm|^{\frac{n}{2}+1}$$ and
$$(n+2)^2A^2|Rm|_{\frac{n}{2}}\leq \frac{15}{6.5}, \ \textrm{in} \
[0,T)$$Which give us $$\int\chi^2|Rm|^{\frac{n}{2}+1}\leq
C(n)A^{-n}t^{-1}(t|\nabla\chi|^2_{\infty}+1)^2
$$ Thus the interpolation argument in moser iteration works. Apply
parabolic moser iteration from page 98 to page 102 of \cite{deane}
we see that $|\chi^2Rm|_{\infty}$ is uniformly bounded on $[0,T)$.\\
Then the smooth extension part of Theorem \ref{T1} is impied by
Theorem \ref{smooth extension of LRF}.
\section{Appendix}
Here we state several theorems  we used:
 \begin{thm}(Perelman:)\\
 \label{T3}For all n and
$\alpha> 0$
there exists $\epsilon$ and $\delta$ such that:\\
If $[g(t),M]$ is a compact solution to Ricci-Flow: $\frac{
\partial g}{\partial t}=-2Ric$, , $0\leq t\leq (\epsilon r)^2$ and when $t=0$
satisfies:

\begin{enumerate}
\item $(1-\delta)C_EVol(\Omega)^{\frac{n-1}{n}}\leq \cdot Vol(\partial\Omega),\forall\
\Omega\Subset B_0(x_0,r)$,  $C_E$  is the isoperimetric constant of
the Euclidean space.
\item $R\geq -r^2$ on $B_0(x_0,r)$
\end{enumerate}
Then $|Rm|(x)\leq\frac{\alpha}{t}+(\epsilon r)^{-2}$ and
 $VolB(x,t^{\frac{1}{2}})\geq C(n)t^{\frac{n}{2}}$ whenever
$0< t\leq (\epsilon r)^2$ and $d_t(x,x_0)\leq \epsilon r$.

\end{thm}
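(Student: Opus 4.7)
The plan is to follow Perelman's original strategy: argument by contradiction combined with parabolic rescaling and point-picking, using the localized $\mathcal{W}$-entropy to convert the initial isoperimetric condition into a $\kappa$-noncollapsing bound at the blow-up point. After normalizing $r=1$, I would suppose there exist sequences $(M_k, g_k(t), x_{0,k})$ satisfying the hypotheses with $\epsilon_k \downarrow 0$ and $\delta_k \downarrow 0$, yet failing the curvature conclusion at some $(y_k, t_k)$ with $d_{t_k}(y_k, x_{0,k}) < \epsilon_k$ and $|Rm|(y_k, t_k) > \alpha/t_k + \epsilon_k^{-2}$. A standard point-picking argument in the parabolic region $\{d_t(\cdot, x_{0,k}) \le \epsilon_k\} \times (0, \epsilon_k^2]$ would upgrade $(y_k, t_k)$ to a point $(\bar{x}_k, \bar{t}_k)$ where $Q_k := |Rm|(\bar{x}_k, \bar{t}_k) \to \infty$ and $|Rm| \le 4Q_k$ throughout a backward parabolic neighborhood of radius $A_k Q_k^{-1/2}$ with $A_k \to \infty$.

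Rescaling parabolically by $Q_k$ then produces Ricci flows $\tilde{g}_k$ on increasingly large parabolic balls with $|Rm|(\bar{x}_k, 0) = 1$ and $|Rm| \le 4$ on those balls. Provided one has a uniform injectivity radius bound at the base point, Hamilton's compactness theorem yields a smooth pointed limit $(\tilde{M}_\infty, \tilde{g}_\infty(t), \bar{x}_\infty)$ defined on a large parabolic neighborhood with $|Rm|(\bar{x}_\infty, 0) = 1$. The scalar-curvature evolution $\partial_t R = \Delta R + 2|Ric|^2$ together with the hypothesis $R \ge -1$ at $t=0$ gives, via the maximum principle, $R \ge -\frac{n}{n+2t}$ before rescaling, so after rescaling by $Q_k \to \infty$ the limit satisfies $R_\infty \ge 0$ everywhere.

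The heart of the argument, and the main obstacle, is obtaining the injectivity radius bound at $(\bar{x}_k, \bar{t}_k)$. For this I would use Perelman's $\mathcal{W}$-functional in its localized form: the initial hypothesis of isoperimetric constant at least $(1-\delta_k)C_E$ on $B_0(x_{0,k}, 1)$ implies, by the equivalence between isoperimetric and log-Sobolev inequalities, that $\mu(g_k(0), \tau) \ge -\gamma(\delta_k)$ for a wide range of $\tau$, where $\gamma(\delta_k) \to 0$. Testing $\mathcal{W}$ with a compactly supported $f$ concentrated near $\bar{x}_k$ at time $\bar{t}_k$, solving the conjugate heat equation backward to $t=0$, and invoking monotonicity of $\mathcal{W}$ along Ricci flow, I would transfer this almost-Euclidean log-Sobolev estimate to time $\bar{t}_k$. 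The delicate point is that heat-kernel transport may exit the ball $B_0(x_{0,k}, 1)$, so one must use a cutoff argument plus scalar curvature lower bound to control the error; this is exactly Perelman's localization trick in Section 10 of \cite{P1}. The resulting log-Sobolev bound forces $\kappa$-noncollapsing at scale $Q_k^{-1/2}$ with $\kappa_k \to \omega(n)/2^n$.

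Finally I would identify the limit and derive the contradiction. The limit $(\tilde{M}_\infty, \tilde{g}_\infty)$ is a complete ancient Ricci flow with bounded curvature, $R_\infty \ge 0$, $|Rm|(\bar{x}_\infty, 0) = 1$, and $\kappa$-noncollapsed at every scale with $\kappa$ arbitrarily close to the Euclidean value. The asymptotic reduced-volume analysis from Perelman's reduced length theory then shows that such a flow has asymptotic reduced volume equal to $(4\pi)^{n/2}$, which forces the asymptotic shrinking soliton to be the Gaussian soliton on $\mathbb{R}^n$ and consequently $\tilde{g}_\infty$ itself to be flat Euclidean space. This contradicts $|Rm|(\bar{x}_\infty, 0) = 1$, closing the proof. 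The statement about $VolB(x, t^{1/2}) \ge C(n) t^{n/2}$ follows from the same $\kappa$-noncollapsing estimate applied directly at $(x, t)$ rather than at a blow-up point.
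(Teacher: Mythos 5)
Theorem \ref{T3} is not proved in this paper: it is placed in the Appendix under the heading ``Here we state several theorems we used,'' and is attributed to Perelman \cite{P1}. The paper invokes it as a black box in the proof of Theorem \ref{T2}, so there is no proof in the paper against which to compare your argument.

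Taken on its own terms, your sketch captures the main skeleton of Perelman's Section 10 argument (contradiction, parabolic point-picking, rescaling, localized $\mathcal{W}$-entropy to extract noncollapsing, compactness, rigidity of the limit), but two steps are weaker than the actual proof. First, ``a standard point-picking argument'' undersells the content of Perelman's Claims 1 and 2: he needs the backward parabolic region on which $|Rm|\le 4Q_k$ holds to have \emph{rescaled} radius $A_kQ_k^{1/2}\cdot Q_k^{-1/2}\to\infty$ and similarly for the time extent, and this diverging scale is precisely what the double point-selection is engineered to produce; it is not automatic and it is essential both for the compactness step and for the cutoff estimate in the $\mathcal{W}$-localization. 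Second, your endgame — identifying the limit as flat via asymptotic reduced volume $(4\pi)^{n/2}$ and the Gaussian soliton — is not how Perelman concludes. He does not need the rescaled limit to be ancient, nor does he invoke the reduced-volume rigidity theorem; instead he applies the $\mathcal{W}$-entropy monotonicity to the conjugate heat kernel centered at $(\bar{x}_k,\bar{t}_k)$ multiplied by a cutoff, uses $v\le 0$ (his Corollary 9.3) together with a quantitative lower bound on $-\int v h^2$ forced by the curvature concentration at $\bar{x}_k$, and contradicts the almost-sharp log-Sobolev inequality coming from the initial isoperimetric hypothesis. Your reduced-volume route would additionally require you to establish that the limit is ancient, that noncollapsing holds at every scale in the limit, and the reduced-volume rigidity statement itself — none of which you supply. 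As a sketch it is in the right spirit, but in the context of this paper the correct move is simply to cite Perelman.
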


  \begin{thm}{Deane Yang:}
\label{Deane's Iso bounds} For all $n$,  $p>\frac{n}{2}$, $\eta
,\epsilon$ and $K$, $\exists \ \omega_0$ and $R$ such that $\forall\
\tau\leq \omega_0,\ \exists R$  such that:\ If $[B(x,1), g]$ be an
unit ball in a complete n-dim Riemannian Manifold which satisfy:
\begin{enumerate}
\item  $Vol\ B_0(x_0,\tau )\geq w(n)(\eta\tau)^n$
\item $|Ric_{-}|_{p[B(x,1)]}\leq K,\ p>\frac{n}{2}$
\end{enumerate}
Then we have:
\begin{center}$(1-\epsilon)2^{1-\frac{1}{n}}\alpha(n-1)\alpha^{\frac{1}{n}-1}(n)\eta^{n+1}Vol(\Omega)^{\frac{n-1}{n}}\leq  Vol(\partial\Omega), \forall\ \Omega\Subset
B(x,R)$. \end{center}
\end{thm}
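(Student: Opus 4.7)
The plan is to prove Theorem~\ref{Deane's Iso bounds} by combining a rescaling argument, an integral Bishop-Gromov comparison, and a Croke-type isoperimetric proof. The decisive structural feature of the hypothesis is that $p>n/2$ renders the $L^p$ bound on $\mathrm{Ric}_-$ \emph{subcritical} under scaling: if I replace $g$ by $\tilde g = R^{-2}g$ on $B(x,R)$, then $(B(x,R),\tilde g)$ is a unit ball, and
$$|\widetilde{\mathrm{Ric}}_-|_{p,\tilde g} \le R^{2-n/p}K \to 0 \quad \text{as } R\to 0,$$
while the hypothesis $\mathrm{Vol}(B(x,\tau))\ge \omega(n)(\eta\tau)^n$ becomes a volume lower bound on a ball of radius $\tau/R$ in $\tilde g$. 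Thus it suffices to prove the isoperimetric inequality on a unit ball with arbitrarily small $|\mathrm{Ric}_-|_p$ and a fixed volume-density lower bound.

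Next I would invoke an almost-Bishop-Gromov comparison in the spirit of Petersen-Wei (or Gallot's Sobolev inequality for $\mathrm{Ric}_-$): once $|\mathrm{Ric}_-|_p$ is small, the volume ratio $\mathrm{Vol}(B(y,r))/(\omega(n)r^n)$ stays within a multiplicative factor $1+o(1)$ of $1$, uniformly in $y$ near $x$ and $r$ up to the unit scale. Combined with the noncollapsing at scale $\tau$, this produces a two-sided volume comparison on all scales $\lesssim 1$, with the error controlled by $R^{2-n/p}K$ and by $\eta$.

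With this almost-Euclidean volume comparison in hand, I would run Croke's proof of the sharp isoperimetric inequality. Croke's argument expresses $\mathrm{Vol}(\Omega)$ as an integral of chord-length functions over the unit sphere bundle, then converts it into a bound involving $\mathrm{Vol}(\partial\Omega)$ by integrating along radial geodesics issuing from points of $\partial\Omega$; the passage loses a Jacobian weight which is controlled via Rauch/Jacobi field comparison. The sharp constant produced by Croke's computation is the isoperimetric constant of the upper hemisphere $S^n_+$, which one checks by testing on $S^n_+$ equals exactly $2^{1-1/n}\alpha(n-1)\alpha(n)^{1/n-1}$. In our setting the Jacobian bound is integral rather than pointwise, so an application of H\"older's inequality (crucially using $p>n/2$) converts the smallness of $|\mathrm{Ric}_-|_p$ into a small multiplicative error in the Jacobian, yielding the $(1-\epsilon)$ prefactor. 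The factor $\eta^{n+1}$ is what Croke's chord-length bookkeeping produces when the baseline density is $\eta^n\omega(n)$ instead of $\omega(n)$: one factor of $\eta^n$ enters through the volume of the model extremizer and the additional $\eta$ through the normalization of radial lengths.

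The hardest step will be making the integral Ricci hypothesis compatible with Croke's Jacobian comparison. Pointwise bounds on $\mathrm{Ric}$ give Rauch comparison along every geodesic, whereas here one only controls $\mathrm{Ric}_-$ in $L^p$. The remedy is a segment-type inequality: integrate $\mathrm{Ric}_-$ along all radial geodesics and apply Fubini together with H\"older, using $p>n/2$, to conclude that, outside an exceptional set of small relative measure, the integrated Ricci defect along each geodesic is uniformly small. Threading this exceptional-set analysis through Croke's proof without destroying the sharp constant, and while keeping the $(1-\epsilon)$ prefactor honest, is the technical heart of the argument; everything else is rescaling and standard comparison geometry.
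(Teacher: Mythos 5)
The paper itself does not prove this theorem: it is stated in the appendix as a result quoted from Deane Yang's paper \cite{deane}, so there is no in-paper argument to compare against. What you have written is therefore a blind reconstruction, and it takes a route that appears genuinely different from Yang's. Judging from the constant $2^{1-\frac{1}{n}}\alpha(n-1)\alpha^{\frac{1}{n}-1}(n)$ (the isoperimetric ratio of the upper hemisphere) and from the references to \cite{gallot} and \cite{croke} in this paper, Yang's argument is in the Gallot lineage, built on Sobolev and heat-kernel/volume-comparison techniques for integral Ricci bounds, whereas you reconstruct it via rescaling into an almost-Euclidean regime followed by Croke's chord-length and visibility-angle argument. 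Your rescaling observation is correct and is indeed the key structural point: since $p>n/2$, the exponent $2-n/p$ is positive, so $|\mathrm{Ric}_-|_p$ is subcritical and can be made as small as one likes by shrinking $R$, while the volume-density hypothesis is scale-invariant.

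That said, the proposal stops short of a proof precisely where you flag the difficulty. Croke's constant degrades with the \emph{visibility angle}, and what your sketch needs (but does not supply) is a quantitative lower bound on the visibility angle in terms of the density bound $\mathrm{Vol}(B(x,\tau))\geq \omega(n)(\eta\tau)^n$ together with the $L^p$ smallness of $\mathrm{Ric}_-$; without that, the hemisphere prefactor and the $\eta^{n+1}$ factor are guesses rather than outputs of an argument. Likewise, the segment-type inequality you invoke to push an $L^p$ Ricci bound through the Jacobi/Rauch comparison along radial geodesics, discarding an exceptional set of small measure, is exactly the kind of estimate (in the spirit of Petersen--Wei) that needs to be carried out carefully to keep the $(1-\epsilon)$ loss honest; you name it as ``the technical heart'' but do not execute it. So the roadmap is reasonable and the scaling reduction is sound, but the central comparison-geometry step and the derivation of the specific constants $2^{1-1/n}\alpha(n-1)\alpha(n)^{1/n-1}\eta^{n+1}$ are left as gaps.
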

\begin{thm}{Deane Yang:}\\
\label{T1} For all $n\geq 3,\ p>\frac{n}{2},\ \beta  \leq
\frac{1}{2n^2}
$ and positive constants $A,K$  we have :\\
Let $[g(t),\ M]$  be a  n-dim complete Riemannian manifold and
$\chi\in \ C_0^{\infty}{(M)}$. If \ $[g(t),M,\chi]$ satisfy:
\begin{enumerate}
\item $|f|_{\frac{2n}{n-2}}\leq A_0|\nabla f|_{2},\ \forall \ f\in C_0^{\infty}{(M)}$.
\item $|Rm|_{\frac{n}{2}[M]}\leq \frac{\beta }{A_0^2},\ \beta  \leq \ \frac{1}{2n^2}$
\item $|Ric|_{p[M]}\leq K,\ p>\frac{n}{2}$
\end{enumerate}
Then the local Ricci flow :$$\frac{\partial g}{\partial
t}=-2(\chi^2Ric)$$ has a smooth solution up to time:
$$T=C(n,p)min[\frac{1}{|\nabla\chi|^2_{\infty}|_0},A_0^{-\frac{2n}{2p-n}}K^{-\frac{2p}{2p-n}}]$$
And when $t\in(0,T]$ we have:
\begin{enumerate}
\item $|\chi^2 Rm|_{\infty}\leq
\bar{C}(n)t^{-1}[t|\nabla\chi|^2_{\infty}|_0+1]^{\frac{4}{n}+2}\beta
$
\item $|\chi^2 Ric|_{\infty}\leq
\bar{C}(n,p){A_0}^{\frac{n}{p}}t^{-\frac{n}{2p}}[t|\nabla\chi|^2_{\infty}|_0+1]^{\frac{n}{p}+\frac{2}{p}}K$
\end{enumerate}
\end{thm}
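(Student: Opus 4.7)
The goal is to combine three ingredients: short time existence for LRF (already available in \cite{deane}, \cite{li}), an $L^\infty$ control for $\chi^2 Rm$ and $\chi^2 Ric$ obtained by parabolic Moser iteration, and a smoothness/extension criterion. Since this paper's Section 4 already provides Theorem \ref{smooth extension of LRF}, the plan is to reduce the quoted statement to two things: propagation of the smallness hypothesis $|Rm|_{n/2}\leq \beta /A_0^2$ on the time interval $[0,T]$, and upgrading this $L^{n/2}$ bound to a quantitative $L^\infty$ bound on $\chi^2Rm$ and $\chi^2 Ric$.

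First I would derive the evolution equation for $|Rm|^{n/2}$ along LRF. Differentiating and integrating by parts produces a good term $-\frac{16}{n^2}\int|\nabla(\chi|Rm|^{n/4})|^2$ coming from $\chi^2\Delta Rm$, a bad cubic term of order $\int \chi^2 |Rm|^{n/2+1}$, and lower order terms controlled by $|\nabla\chi|_\infty^2 \int |Rm|^{n/2}$. Applying the Sobolev inequality (assumption 1) to $\chi|Rm|^{n/4}$ turns the bad cubic term into $6.5\, |Rm|_{n/2}\cdot A_0^2 \int|\nabla(\chi|Rm|^{n/4})|^2$; the smallness of $|Rm|_{n/2}$ is exactly what lets the good Laplacian term dominate. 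A standard continuity/bootstrap argument on $[0,T]$ then preserves $|Rm|_{n/2}\leq \tfrac{15}{6.5 n^2 A_0^2}$ provided $T$ is chosen as in the statement and $C(n,p)$ is small. I would then repeat the same computation for $\int\chi^2|Rm|^{n/2+1}$ in order to set up the interpolation step.

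With these $L^{n/2}$ and $L^{n/2+1}$ estimates in hand, the plan is to run parabolic Moser iteration on $\chi^2 |Rm|$, exactly as in pages 98--102 of \cite{deane}. The interpolation between the two integral bounds provides the initial rung; iterating the Sobolev inequality with the evolution inequality for $\int (\chi|Rm|^q)^{2n/(n-2)}$ gives a geometric cascade whose limit is
\begin{equation*}
|\chi^2 Rm|_\infty \leq \bar C(n)\, t^{-1}(t|\nabla\chi|_\infty^2|_0+1)^{4/n+2}\,\beta.
\end{equation*}
The Ricci bound follows by the same scheme but applied to $\chi^2|Ric|^p$: the assumption $|Ric|_p\leq K$ replaces the smallness assumption, and the extra factor $A_0^{n/p}t^{-n/2p}K$ comes out of the scaling in the iteration.

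Finally, once $|\chi^2 Rm|_\infty$ is uniformly bounded on $[0,T)$, Theorem \ref{smooth extension of LRF} gives the smooth extension through $t=T$, so that the flow is smooth on all of $(0,T]$. The main obstacle is not the iteration itself, which is standard once the right structural inequality is in place, but rather keeping precise track of the constant in front of $\int|\nabla(\chi|Rm|^{n/4})|^2$ so that the Sobolev absorption step closes; this is exactly the role played by the specific gap constant $\varpi(n,\eta)$ in the $\digamma$ assumption, which enforces $\beta\leq 1/n^2$. The cutoff-generated terms involving $|\nabla\chi|_\infty$ and $|\nabla^2\chi|$ are harmless lower order once integration by parts is combined with Cauchy--Schwarz, as demonstrated in Lemma \ref{first derivative of curvature and hessian of cutoff fuction}.
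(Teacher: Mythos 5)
Your proposal tracks the paper's own argument in Section~5 (``Proof of the Smooth Extension Part of Theorem~\ref{T1}'') essentially step for step: compute $\frac{\partial}{\partial t}\int|Rm|^{n/2}$ along LRF, extract the good term $-\frac{16}{n^2}\int|\nabla(\chi|Rm|^{n/4})|^2$, absorb the cubic term by Sobolev using the $\beta/A_0^2$ smallness, establish the analogous inequality for $\int\chi^2|Rm|^{n/2+1}$ to supply the interpolation rung, Moser-iterate as on pages 98--102 of \cite{deane}, and then invoke Theorem~\ref{smooth extension of LRF} to continue the flow smoothly to $T$. One small slip: the binding smallness threshold is $\beta\leq\frac{1}{2n^2}$ (coming from the $\int\chi^2|Rm|^{n/2+1}$ interpolation step, as the statement records), not $\beta\leq\frac{1}{n^2}$, and the constant $\varpi(n,\eta)$ belongs to the $\digamma$ assumption of Theorem~\ref{T2} rather than to the hypotheses of this theorem.
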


\end{document}